\numberwithin{equation}{section}
\journal{}
\newtheorem{theorem}{Theorem}[section]
\newtheorem{lemma}{Lemma}[section]
\newtheorem*{proof}{Proof}
\newtheorem{example}{Example}[section]
\newtheorem{remark}{Remark}[section]
\numberwithin{figure}{section}
\numberwithin{table}{section}
\numberwithin{figure}{section}
\numberwithin{algorithm}{section}
\begin{document}
\begin{frontmatter}
\markboth{Li Weiguo, Bao Wendi and Xing Lili}{Kaczmarz-Type Method for Solving Matrix Equation $AXB=C$}
\title{Kaczmarz-Type Method for Solving Matrix Equation $AXB=C$}
\author[mymainaddress]{Xing Lili}
\author[mymainaddress]{Bao Wendi}
\author[mymainaddress]{Li Weiguo}
\cortext[mycorrespondingauthor]{Corresponding author}
\ead{liwg@upc.edu.cn}
\address[mymainaddress]{College of Science, China University of Petroleum, Qingdao 266580, P .R. China}
\begin{abstract}
In this paper, several row and column orthogonal projection methods are proposed for solving matrix equation $AXB=C$, where the matrix $A$ and $B$ are full rank or rank deficient and equation is consistent or not. These methods are iterative methods without matrix multiplication. It is theoretically proved these methods converge to the solution or least-squares solution of the matrix equation. Numerical results show that these methods are more efficient than iterative methods involving matrix multiplication for high-dimensional matrix.
\end{abstract}
\begin{keyword}
 matrix equation\sep Kaczmarz method\sep Gauss-Seidel method\sep convergence
\end{keyword}
\end{frontmatter}
\section{Introduction}
Consider the linear matrix equation
 \begin{equation}\label{e11}
AXB=C,
\end{equation}
where $A\in R^{m\times p}$, $B\in R^{q\times n}$ and $C\in R^{m\times n}$. Such problems arise in linear control and filtering theory for continuous or discrete-time large-scale dynamical systems. They play an important role in image restoration and other problems; for more details see \cite{BJ08, ZD08} and the references therein. The linear matrix matrix Eq. (\ref{e11}) has been considered by many authors. In \cite{P56} Penrose presented a sufficient and necessary condition for the consistency of this equation and for the consistent case, he provided a representation of its general solution. When the matrices $A$ and $B$ are small and dense, direct methods such as $QR$-factorization-based algorithms \cite{FF94, Z95} are attractive. However, these direct algorithms are quite costly and impractical when $A$ and $B$ are large. Therefore, iteration methods in \cite{DC05, DLD08, WLD13, TTLX17} to solve the matrix Eq. (\ref{e11}) have attracted much interests recently. Many methods among these frequently use the matrix-matrix product operation, and consume a lot of computing time.

It is well known that the matrix Eq. (\ref{e11}) can be written the following mathematically equivalent matrix-vector form by Kronecker products symbol
\begin{equation}\label{e12}
(B^T\otimes A){\rm vec}(X)={\rm vec}(C).
\end{equation}
where the Kronecker product $B^T\otimes A\in R^{mn\times pq}$, the right-side vector vec$(C)\in R^{mn\times 1}$, and unknown vector vec$(X)\in R^{pq\times 1}$. With the application of Kronecker products, many algorithms are proposed to solve the matrix Eq. (\ref{e11}) (see, e.g., \cite{ZLGZ11, CI08, P10}). However, when the dimensions of matrices $A$
and $B$ are large, the dimensions of the linear system (\ref{e12}) increase dramatically, which increases memory usage and computational cost of the numerical algorithms to find an approximate solution of Equation (\ref{e12}). Du et al. proposed the randomized block coordinate descent (RBCD) method for solving the matrix least-squares problem $\max\limits_{X\in R^{p\times q}}\|C-AXB\|_F$ in \cite{DRS22}. This method requires that matrix B is full row rank. Wu et al. \cite{WLZ22} introduced two kinds of Kaczmarz-type methods to solve consistent matrix equation $AXB=C$: relaxed greedy randomized Kaczmarz (ME-RGRK) and maximal weighted residual Kaczmarz (ME-MWRK). Although the row and column index selection strategy is time-consuming, the ideas of these two methods are suitable for solving large-scale consistent matrix equations.
In \cite{NZ22}, Niu and Zheng proposed two classes of global randomized Kaczmarz methods: the global randomized block Kaczmarz algorithm and global randomized average block
Kaczmarz algorithm, for solving large-scale consistent linear matrix equation $AXB=C$.

In this paper, Kaczmarz method \cite{K37} and coordinate descent method \cite{W15} are used to solve (\ref{e11}) (maybe consistent or inconsistent) by the product of matrix and vector.

All the results in this paper hold in the complex field. But for the sake of simplicity, we only discuss it in the real number field.

In this paper, we denote $A^T$,  $A^+$, $r(A)$, $R(A)$, $\|A\|_F= \sqrt{{\rm trace}(A^TA)}$ and $\langle A, B\rangle_F= {\rm trace}(A^TB)$ as the transpose, the Moore-Penrose generalized inverse, the rank of $A$, the column space of $A$, the Frobenius norm of $A$ and the inner product of two matrices $A$ and $B$, respectively. We use $I$ to denote the identity matrix whose order is clear from the context. In addition, for a given matrix $G=(g_{ij})\in R^{m\times n}$, $G_{i,:}$, $G_{:,j}$, $\sigma_{\max}(G)$ and $\sigma_{\min}(G)$, are used to denote its $i$th row, $j$th column, the maximum singular value  and the smallest nonzero singular value of $G$ respectively. Let $E_k$ denote the expected value conditional on the first $k$ iterations, that is,
$$E_k[\cdot]=E[\cdot|i_0,j_0,i_1,j_1,...,i_{k-1},j_{k-1}],$$
where $i_s$ and $j_s(s=0,1,...,k-1)$ are the row and the column chosen at the $s$th iteration. Let the conditional expectations with respect to the random row index be  $$E_k^i[\cdot]=E[\cdot|i_0,j_0,i_1,j_1,...,i_{k-1},j_{k-1},j_k] $$
and with respect to the random column index be  $$E_k^j[\cdot]=E[\cdot|i_0,j_0,i_1,j_1,...,i_{k-1},j_{k-1},i_k]. $$ By the law of total expectation, it holds that $E_k[\cdot] =E_k^i [E_k^j[\cdot]] $.

The organization of this paper is as follows. In Section 2, we will discuss Kaczmarz method for finding the minimal $F$-norm solution of the consistent matrix Eq. (\ref{e11}). In Section 3, we give a Gauss-seidel method for solving the least-squares solution of the inconsistent matrix Eq.  (\ref{e11}). In Section 4, we discuss the extended Kaczmarz method and extended Gauss-Seidel method for finding the minimal $F$-norm least-squares solution of the matrix Eq. (\ref{e11}). In Section 5, some numerical examples are provided to illustrate the effectiveness of our new methods. Finally, some brief concluding remarks are described in Section 6. We summarize the convergence of the proposed methods in expectation to the minimal $F$-norm solution $X^*=A^+CB^+$ for all types of matrix equations in Table \ref{tab11}.
\begin{table}[H]
\caption{Summary of the convergence of ME-GRGK \cite{WLZ22}, ME-WMRK \cite{WLZ22}, RBCD \cite{DRS22}, CME-RK (Theorem \ref{t21}), IME-RGS (Theorem \ref{t31}), IME-REKRK (Theorem \ref{t41}), IME-REKRGS (Theorem \ref{t42}), DREK  and DREGS in expectation to the minimal $F$-norm solution $X^*=A^+CB^+$ for all types of matrix equations. (Note. Y means the algorithm is convergent and N means not.)}
\label{tab11}
\centering
\resizebox{\textwidth}{!}{
\begin{tabular}{ c c c c c c c c c c c c c}
\hline
  matrix equation  & r(A)  &r(B) & ME-GRGK  & ME-WMRK  & RBCD  & CME-RK & IME-RGS  &  IME-REKRK & IME-REKRGS & DREK   & DREGS  \\
\hline
\multirow{4}{*}{consistent}  & =p & =q & Y & Y & Y &Y & Y &Y   & Y   & Y	&  Y	\\
    & =p   & <q   &Y  & Y  & N  & Y & N & Y &  Y (r(B)=n)  & Y	&  Y		\\
     &<P   &=q    &  Y  & Y  & Y  & Y & N & N &  Y  & Y	&  Y		\\
      & <p   & <q   & Y   & Y  & N &  Y& N &  Y(r(B)=n) &   N   & Y	&  Y		\\
\hline
\multirow{4}{*}{inconsistent}  & =p & =q & N  & N  & Y  &N  & Y &Y   & Y   & Y	&  Y	\\
   & =p   & <q    & N   & N  & N   & N & N &   Y(r(B)=n) &  N    & Y	&  Y 		\\
     &<P   &=q  & N   & N  & Y  & N & N & N &  Y   & Y	&  Y		\\
      & <p   & <q  & N  & N  & N & N & N & Y(r(B)=n) &  N   & Y	&  Y	\\
\hline
 \end{tabular}}
\end{table}

\section{Kaczmarz Method for Consistent Case}
If the matrix Eq. (\ref{e11}) is consistent, i.e., $AA^+CB^+B=C$ (necessary and sufficient conditions for consistent, hence $A^+CB^+$ is a solution of the consistent matrix Eq.  (\ref{e11})). Especially, if $A$ is full row rank ($m\leq p$) and $B$ is full column rank ($q\geq n$), the matrix Eq.  (\ref{e11}) is consistent because $X^*=A^T(AA^T)^{-1}C(B^TB)^{-1}B^T$ is one solution of this equation.
In general, the matrix Eq. (\ref{e11}) has multiple solutions. Now we try to find its minimal $F$-norm solution $X^*=A^+CB^+$ by Kaczmarz method.

Assume that $A$ has no row of all zeros and $B$ has no column of all zeros. The matrix Eq.  (\ref{e11}) can be rewritten as the following system of matrix equations
\begin{equation}\label{e21}
\left\{
\begin{array}{c}
AY=C,\\
B^TX^T=Y^T,
\end{array}
\right.
\end{equation}
where $Y\in R^{p\times n}$.
The classical Kaczmarz method which was introduced in 1937 \cite{K37} is a row projection iterative algorithm
for solving a consistent system $Ax = b$ where $A\in R^{m\times p}$, $b\in R^m$ and $x\in R^p$. This method involves only a single equation per iteration as follows which converges to the least norm solution $A^+b$ of $Ax = b$ with a initial iteration $x^{(0)}\in R(A^T)$,
\begin{equation}\label{e22}
x^{(k+1)}=x^{(k)}+\frac{b_i-A_{i,:}x^{(k)}}{\|A_{i,:}\|_2^2}A_{i,:}^T, \  \ k\ge 0,
\end{equation}
where $i=(k \ mod \ m) + 1$. If we iterate the system of linear equations $AY_{:,j}=C_{:,j}$, $j=1, \cdots, n$ simultaneously and denote $Y^{(k)}=[Y_{:,1}^{(k)}, Y_{:,2}^{(k)}, \cdots, Y_{:,n}^{(k)}]$, we get
\begin{equation}\label{e23}
Y^{(k+1)}=Y^{(k)} +\frac{A_{i,:}^T}{\|A_{i,:}\|_2^2}(C_{i,:}-A_{i,:}Y^{(k)}), \  \ k\ge 0,
\end{equation}
where $i=(k \ mod \ m) + 1$. And then $A_{i,:}Y^{(k+1)}=C_{i,:}$ holds, that is, $Y^{(k+1)}$ is the projection of $Y^{(k)}$ onto the subspace $H_i=\left\{Y\in R^{p\times n}: \ A_{i,:}Y=C_{i,:}\right\}$. So we obtain an orthogonal projection method to solve the matrix equation $AY=C$.

Similarly, we can get the following column orthogonal projection method to solve equation $B^TX^T=(Y^{(k+1)})^T$
\begin{equation}\label{e24}
X^{(k+1)}=X^{(k)}+\frac{Y_{:,j}^{(k+1)}-X^{(k)}B_{:,j}}{\|B_{:,j}\|^2_2}B_{:,j}^T, \ k\ge 0,
\end{equation}
where $j=(k\ mod\ n)+1$. And then $X^{(k+1)}B_{:,j}=Y_{:,j}^{(k+1)}$ holds, that is, $X^{(k+1)}$ is the projection of $X^{(k)}$ onto the subspace $\hat{H}_j=\left\{X\in R^{p\times q}: XB_{:,j}=Y_{:,j}^{(k+1)}\right\}$.

With the use of the formulae (\ref{e23}) and (\ref{e24}), we get a randomized Kaczmarz-type algorithm as follows, which is called the CME-RK algorithm.

\begin{algorithm}
  \leftline{\caption{RK Method for Consistent Matrix Equation $AXB=C$ (CME-RK)\label{alg21}}}
  \begin{algorithmic}[1]
    \Require
      $A\in R^{m\times p}$, $B\in R^{q\times n}$, $C\in R^{m\times n}$,$(X^{(0)}_{i,:})^T\in R(B),\ i=1,\ldots,p$, $Y^{(0)}=X^{(0)}B$, $Y^{(0)}_{:,j}\in R(A^T),\ j=1,\ldots, n$, $K\in R$
    \State For $i=1:m$, $M(i)=\|A_{i,:}\|_2^2$
    \State For $j=1:n$, $N(j)=\|B_{:,j}\|_2^2$
    \For {$k=0,1,2,\cdots, K-1$}
    \State Pick $i$ with probability $p_{i}(A)=\frac{\|A_{i,:}\|_2^2}{\|A\|^2_F}$   and $j$ with probability $\hat{p}_{j}(B)=\frac{\|B_{:,j}\|_2^2}{\|B\|^2_F}$
    \State Compute $Y^{(k+1)}=Y^{(k)}+\frac{A_{i,:}^T}{M(i)}(C_{i,:}-A_{i,:}Y^{(k)})$
    \State Compute $X^{(k+1)}=X^{(k)}+\frac{Y_{:,j}^{(k+1)}-X^{(k)}B_{:,j}}{N(j)}B_{:,j}^T$
    \EndFor
    \State Output $X^{(K)}$
  \end{algorithmic}
\end{algorithm}

The cost of each iteration of this method is $4p(n+q)+2p$ if the square of the row norm of $A$ and the square of the column norm of $B$  are pre-computed in advance. In the following theorem, with the idea of the RK method \cite{SV09}, we will prove that $X^{(k)}$ generated by Algorithm \ref{alg21}  converges to the the minimal $F$-norm solution of $AXB=C$ if $i$ and $j$ are picked at random.

Before proving the convergence result of Algorithm \ref{alg21}, we analyze the convergence of $Y^{(k)}$ and give the following lemmas. Let $Y^*= A^+C$. The sequence $\{Y^{(k)}\}$ is generated by  (\ref{e23}) starting from the initial matrix $Y^{(0)}\in R^{p\times n}$.
\begin{lemma}\label{lem1}
If the sequence $\{Y^{(k)}\}$ is convergent, it must converge to $Y^*=A^+C$, provided that $Y^{(0)}_{:,j}\in R(A^T),\ j=1,\ldots, n$.
\end{lemma}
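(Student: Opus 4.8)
The plan is to combine two observations: first, that the iteration keeps every column of $Y^{(k)}$ inside the closed subspace $R(A^T)$, so the same holds for any limit; and second, that a limit of the iteration must satisfy $A\bar Y=C$, i.e.\ it is a genuine solution of $AY=C$. Since the consistency of $AXB=C$ forces each column of $C$ to lie in $R(A)$, hence $AA^+C=C$, the matrix $Y^*=A^+C$ is precisely the unique solution of $AY=C$ whose columns all lie in $R(A^T)$; matching this against the limit then yields $\bar Y=Y^*$.

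For the first observation, I would argue by induction on $k$. The update $(\ref{e23})$ gives $Y^{(k+1)}-Y^{(k)}=\frac{A_{i,:}^{T}}{\|A_{i,:}\|_2^2}\,(C_{i,:}-A_{i,:}Y^{(k)})$, and every column of the right-hand side is a scalar multiple of $A_{i,:}^{T}\in R(A^T)$. Hence if $Y^{(0)}_{:,j}\in R(A^T)$ for all $j$, then $Y^{(k)}_{:,j}\in R(A^T)$ for all $k$ and all $j$. Because $R(A^T)$ is finite-dimensional, hence closed, passing to the limit $Y^{(k)}\to\bar Y$ yields $\bar Y_{:,j}\in R(A^T)$ for every $j$.

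For the second observation, fix a row index $i\in\{1,\dots,m\}$; by the standing assumption $A_{i,:}\neq 0$. Along the subsequence of indices $k$ with $k\equiv i-1\pmod m$ the cyclic rule selects row $i$, so $Y^{(k+1)}=Y^{(k)}+\frac{A_{i,:}^{T}}{\|A_{i,:}\|_2^2}(C_{i,:}-A_{i,:}Y^{(k)})$; since both $Y^{(k)}$ and $Y^{(k+1)}$ converge to $\bar Y$ we obtain $A_{i,:}^{T}(C_{i,:}-A_{i,:}\bar Y)=0$, and as $A_{i,:}^{T}\neq 0$ this rank-one matrix can only vanish if $C_{i,:}-A_{i,:}\bar Y=0$. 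Letting $i$ range over all rows gives $A\bar Y=C$.

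Finally, I would close by uniqueness. Both $\bar Y$ and $Y^*=A^+C$ solve $AY=C$, so every column of $\bar Y-Y^*$ lies in $\ker A=R(A^T)^{\perp}$; but by the first observation every column of $\bar Y-Y^*$ also lies in $R(A^T)$, whence $\bar Y=Y^*$. The only delicate point is the second step --- ensuring a limit point is fixed by each individual row projection --- which is immediate for the cyclic control used in $(\ref{e23})$ (and in the randomized setting would follow from the fact that each admissible row index is selected infinitely often with probability one), together with the preliminary identity $AA^+C=C$, which is exactly where the consistency hypothesis on $AXB=C$ enters.
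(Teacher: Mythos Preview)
Your argument is correct and follows essentially the same skeleton as the paper's proof: both show by induction that every column of $Y^{(k)}$ remains in $R(A^T)$, derive from the limiting update that $A_{i,:}^{T}(A_{i,:}\bar Y-C_{i,:})=0$ for every $i$, and then combine these two facts to identify $\bar Y$ with $A^+C$.

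The one place you diverge is in how you exploit the row-wise fixed-point condition. The paper simply sums $A_{i,:}^{T}(A_{i,:}\tilde Y-C_{i,:})=0$ over $i$ to obtain the normal equations $A^{T}A\tilde Y=A^{T}C$, then parametrizes the general least-squares solution as $A^{+}C_{:,j}+(I-A^{+}A)z$ and kills the second summand using $\tilde Y_{:,j}\in R(A^{T})$. You instead use the rank-one structure together with $A_{i,:}\neq 0$ to conclude $A_{i,:}\bar Y=C_{i,:}$ directly, getting $A\bar Y=C$, and finish with $\ker A\cap R(A^{T})=\{0\}$. Your route is a touch more elementary but requires the identity $AA^{+}C=C$ (i.e.\ consistency of $AY=C$), which you correctly extract from the consistency of $AXB=C$; the paper's normal-equations route does not need this and would equally well pin down $A^{+}C$ as the unique least-squares solution with columns in $R(A^{T})$ even without consistency. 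In the present consistent setting both arguments are valid and essentially equivalent.
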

\begin{proof}
Let $\tilde{Y}$ be the limit point of $\{Y^{(k)}\}$. From the consistency of the iteration scheme, we have
$$ A^T_{i,:}(A_{i,:}\tilde{Y}-C_{i,:})=0, \ i=1,2,\ldots, m. $$
As a result, it holds $A^T A\tilde{Y} - A^T C = \sum_{i=1}^m A^T_{i,:}(A_{i,:}\tilde{Y}-C_{i,:})=0$, that is $A^T A\tilde{Y} = A^T C$. This means that $\tilde{Y}_{:,j}$ is the least-square solution of $Ax=C_{:,j}, j=1,2,\ldots, n$. Thus $\tilde{Y}_{:,j}$ can be written as
\begin{equation}\label{e25}
\tilde{Y}_{:,j}=A^+C_{:,j}+(I-A^+A)z, z\in R^p, j=1,2,\ldots, n.
\end{equation}
Noting that $A^T_{i,:}=A^T I_{:,i}$, for $k\ge 0$, Kaczmarz method (\ref{e23}) can be rewritten as
\begin{equation*}
Y^{(k+1)}_{:,j}=Y^{(k)}_{:,j} +\frac{C_{i,j}-A_{i,:}Y^{(k)}_{:,j}}{\|A_{i,:}\|_2^2}A^T I_{:,i}, \  \ j=1,2,\ldots, n.
\end{equation*}
At each iteration, the above method only adds a linear combination of a column of $A^T$ to corresponding column of the current iterate, so  $Y^{(k)}_{:,j} \in R(A^T)$, provided  $Y^{(0)}_{:,j}\in R(A^T),\ j=1,\ldots, n$. Then $\tilde{Y}_{:,j} \in R(A^T), j=1,2,\ldots, n$. Combining (\ref{e25}), we get $\tilde{Y}_{:,j}=A^+C_{:,j}, j=1,2,\ldots, n$, so that $\tilde{Y}\equiv Y^*=A^+C$. This completes the proof.
\end{proof}
\begin{lemma}\label{lem3}
 Let $A\in R^{m\times p}$ be any nonzero matrix. For $Y_{:,j}\in R(A^T),\ j=1,\ldots, n$, it holds that
 \begin{equation*}
\|AY\|_F^2\geq\sigma_{\min}^2(A)\|Y\|_F^2.
\end{equation*}
\end{lemma}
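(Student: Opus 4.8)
The claim is the matrix version of the elementary fact that for a vector $y$ in the row space $R(A^T)$ of $A$, one has $\|Ay\|_2 \geq \sigma_{\min}(A)\|y\|_2$, where $\sigma_{\min}(A)$ denotes the smallest \emph{nonzero} singular value. The plan is to reduce the matrix inequality to this columnwise vector inequality by exploiting the fact that the Frobenius norm decomposes over columns: $\|Y\|_F^2 = \sum_{j=1}^n \|Y_{:,j}\|_2^2$ and $\|AY\|_F^2 = \sum_{j=1}^n \|AY_{:,j}\|_2^2$, since $(AY)_{:,j} = AY_{:,j}$.

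First I would establish the vector case. Given a nonzero matrix $A$ with (thin) SVD $A = U\Sigma V^T$, write $r = r(A)$ and let $V_r$ collect the right singular vectors corresponding to the nonzero singular values, so that $R(A^T) = R(V_r)$. For $x \in R(A^T)$, expand $x = V_r c$ for some $c \in R^r$; then $\|x\|_2^2 = \|c\|_2^2$ and $\|Ax\|_2^2 = \|\Sigma_r c\|_2^2 = \sum_{\ell=1}^r \sigma_\ell^2 c_\ell^2 \geq \sigma_{\min}^2(A)\|c\|_2^2 = \sigma_{\min}^2(A)\|x\|_2^2$. An equivalent and perhaps cleaner route: for $x \in R(A^T)$ we have $x = A^+Ax$ (since $A^+A$ is the orthogonal projector onto $R(A^T)$), hence $\|x\|_2 = \|A^+Ax\|_2 \leq \|A^+\|_2\|Ax\|_2 = \sigma_{\min}^{-1}(A)\|Ax\|_2$, which rearranges to the desired bound.

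Then I would assemble the matrix statement: applying the vector inequality to each column $Y_{:,j} \in R(A^T)$ and summing,
\begin{equation*}
\|AY\|_F^2 = \sum_{j=1}^n \|AY_{:,j}\|_2^2 \geq \sigma_{\min}^2(A)\sum_{j=1}^n \|Y_{:,j}\|_2^2 = \sigma_{\min}^2(A)\|Y\|_F^2,
\end{equation*}
which is exactly the claim.

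There is no real obstacle here; the only thing to be careful about is the convention that $\sigma_{\min}(A)$ is the smallest \emph{nonzero} singular value (as fixed in the notation section), so the inequality genuinely requires the hypothesis $Y_{:,j} \in R(A^T)$ — without it, a column component in $\ker(A)$ would be annihilated by $A$ and the bound would fail. I would make sure the write-up flags that this column-space hypothesis is precisely what makes the projector identity $Y_{:,j} = A^+AY_{:,j}$ valid.
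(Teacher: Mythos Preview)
Your argument is correct and complete. The reduction to columns via $\|AY\|_F^2=\sum_{j}\|AY_{:,j}\|_2^2$, together with either the SVD expansion or the projector identity $Y_{:,j}=A^+AY_{:,j}$ on $R(A^T)$, yields the inequality exactly as stated, and you correctly identify why the hypothesis $Y_{:,j}\in R(A^T)$ is essential when $\sigma_{\min}(A)$ is interpreted as the smallest nonzero singular value.

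Note, however, that the paper itself does not supply a proof of this lemma; it is stated without argument and then invoked in the proofs of Lemma~\ref{th1}, Lemma~\ref{lem31}, Lemma~\ref{lem32}, and Lemma~\ref{lem44}. So there is no ``paper's approach'' to compare against---your write-up simply fills in what the authors left as a standard fact. Either of your two routes (direct SVD computation or the bound $\|x\|_2=\|A^+Ax\|_2\le \|A^+\|_2\|Ax\|_2=\sigma_{\min}^{-1}(A)\|Ax\|_2$) is a clean way to record it.
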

\begin{lemma}\label{th1}
The sequence $\{Y^{(k)}\}$ generated by  (\ref{e23}) starting from the initial matrix $Y^{(0)}\in R^{p\times n}$ in which $Y^{(0)}_{:,j}\in R(A^T),\ j=1,\ldots, n$, converges linearly to $A^+C$ in mean square form. Moreover, the solution error in expectation for the iteration sequence $Y^{(k)}$ obeys
\begin{equation}\label{AY=C}
E\left[\left\|Y^{(k)}-A^+C\right\|_F^2\right]\leq \rho_1^{k}\left\|Y^{(0)}-A^+C\right\|_F^2,
\end{equation}
where the $i$th row of $A$ is selected with probability $p_i(A)=\frac{\|A_{i,:}\|_2^2}{\|A\|^2_F}$, and $\rho_1=1-\frac{\sigma^2_{\min}(A)}{\|A\|^2_F}$.
\end{lemma}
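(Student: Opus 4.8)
The plan is to adapt the classical randomized Kaczmarz argument of Strohmer--Vershynin to the matrix setting, tracking the error matrix $Y^{(k)}-Y^*$ with $Y^*=A^+C$ in the Frobenius norm. First I would note that since $AXB=C$ is consistent, the reduced system $AY=C$ is consistent as well (take $Y=XB$), so $AA^+C=C$ and hence $A_{i,:}Y^*=C_{i,:}$ for every $i$. Substituting $C_{i,:}=A_{i,:}Y^*$ into (\ref{e23}) turns the update into the homogeneous recursion
\begin{equation*}
Y^{(k+1)}-Y^*=\Bigl(I-\frac{A_{i,:}^TA_{i,:}}{\|A_{i,:}\|_2^2}\Bigr)\bigl(Y^{(k)}-Y^*\bigr),
\end{equation*}
whose matrix factor $P_i:=I-A_{i,:}^TA_{i,:}/\|A_{i,:}\|_2^2$ is the orthogonal projector onto the orthogonal complement of $A_{i,:}^T$.

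Next I would apply the Pythagorean identity columnwise: since $P_i$ is an orthogonal projector, summing $\|P_iv\|_2^2=\|v\|_2^2-\|(I-P_i)v\|_2^2$ over the columns of $Y^{(k)}-Y^*$ gives
\begin{equation*}
\|Y^{(k+1)}-Y^*\|_F^2=\|Y^{(k)}-Y^*\|_F^2-\frac{\|A_{i,:}(Y^{(k)}-Y^*)\|_2^2}{\|A_{i,:}\|_2^2}.
\end{equation*}
Taking the conditional expectation $E_k$ over the row index $i$, which is drawn with probability $p_i(A)=\|A_{i,:}\|_2^2/\|A\|_F^2$, the probability weights cancel the denominators and the residual terms sum to $\|A(Y^{(k)}-Y^*)\|_F^2$, so that
\begin{equation*}
E_k\|Y^{(k+1)}-Y^*\|_F^2=\|Y^{(k)}-Y^*\|_F^2-\frac{\|A(Y^{(k)}-Y^*)\|_F^2}{\|A\|_F^2}.
\end{equation*}

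To convert the residual term into a contraction factor I would invoke Lemma \ref{lem3}, which requires every column of $Y^{(k)}-Y^*$ to lie in $R(A^T)$. This is the one point needing care, but it is already handled by the argument in the proof of Lemma \ref{lem1}: each update adds only a multiple of a column of $A^T$ to the corresponding column, so $Y^{(k)}_{:,j}\in R(A^T)$ whenever $Y^{(0)}_{:,j}\in R(A^T)$, while $Y^*_{:,j}=A^+C_{:,j}\in R(A^+)=R(A^T)$; hence the difference stays columnwise in $R(A^T)$ for all $k$. Granted this, Lemma \ref{lem3} yields $\|A(Y^{(k)}-Y^*)\|_F^2\ge\sigma_{\min}^2(A)\|Y^{(k)}-Y^*\|_F^2$, whence
\begin{equation*}
E_k\|Y^{(k+1)}-Y^*\|_F^2\le\rho_1\,\|Y^{(k)}-Y^*\|_F^2,\qquad \rho_1=1-\frac{\sigma_{\min}^2(A)}{\|A\|_F^2}.
\end{equation*}
Taking full expectations and unrolling the recursion over $k$ gives the claimed bound (\ref{AY=C}); since $0\le\rho_1<1$ (because $\sigma_{\min}(A)\le\sigma_{\max}(A)\le\|A\|_F$ for the nonzero matrix $A$), this is linear convergence in mean square. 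The only genuine subtlety, as noted, is maintaining the range invariance so that Lemma \ref{lem3} applies with the global $\sigma_{\min}(A)$; everything else is a direct matrix transcription of the scalar Kaczmarz proof.
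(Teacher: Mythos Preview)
Your proposal is correct and follows essentially the same route as the paper: both derive the one-step identity $\|Y^{(k+1)}-Y^*\|_F^2=\|Y^{(k)}-Y^*\|_F^2-\|A_{i,:}(Y^{(k)}-Y^*)\|_2^2/\|A_{i,:}\|_2^2$, take conditional expectation with the row probabilities to obtain $\|A(Y^{(k)}-Y^*)\|_F^2/\|A\|_F^2$, apply Lemma~\ref{lem3} using the columnwise range invariance from Lemma~\ref{lem1}, and then unroll. The only cosmetic difference is that you write the update directly as an orthogonal projector $P_i$ and invoke Pythagoras columnwise, whereas the paper arrives at the same identity by expanding $\|Y^{(k)}-Y^*\|_F^2$ via the inner-product decomposition and checking $\langle Y^{(k+1)}-Y^{(k)},\,Y^{(k+1)}-Y^*\rangle_F=0$ from $A_{i,:}Y^{(k+1)}=A_{i,:}Y^*=C_{i,:}$.
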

\begin{proof}
It is easy to see that
\begin{equation*}
\left\| Y^{(k)} -Y^*\right\|_F^2=\|Y^{(k)}-Y^{(k+1)}\|_F^2+\left\|Y^{(k+1)} - Y^*\right\|_F^2-2\langle Y^{(k+1)}-Y^{(k)}, Y^{(k+1)}-Y^*\rangle_F.
\end{equation*}
It follows from
\begin{align*}
\langle Y^{(k+1)}-Y^{(k)}, Y^{(k+1)}-Y^*\rangle_F=&\left\langle \frac{A_{i,:}^T}{\|A_{i,:}\|_2^2}(C_{i,:}-A_{i,:}Y^{(k)}), Y^{(k+1)}-Y^*\right\rangle_F \\
=&{\rm trace}\left(\frac{1}{\|A_{i,:}\|_2^2}(C_{i,:}-A_{i,:}Y^{(k)})^TA_{i,:}(Y^{(k+1)}-Y^*)\right)\\
=&0  \ ({\rm by} \ A_{i,:}Y^{(k+1)} = C_{i,:} \ {\rm and\ } A_{i,:}Y^*=C_{i,:} )
\end{align*}
and
\begin{align*}
\|Y^{(k)}-Y^{(k+1)}\|_F^2=& \left\|\frac{A_{i,:}^T}{\|A_{i,:}\|^2_2}\left(  A_{i,:} Y^{(k)} -C_{i,:} \right)\right\|_F^2\\
  & =  {\rm trace}\left(\left(  A_{i,:} Y^{(k)} -C_{i,:} \right)^T \frac{A_{i,:}}{\|A_{i,:}\|^2_2} \frac{A_{i,:}^T}{\|A_{i,:}\|^2_2}\left(  A_{i,:} Y^{(k)} -C_{i,:} \right)  \right)\\
  & = \frac{\left\|  A_{i,:} Y^{(k)} -C_{i,:} \right\|_2^2}{\|A_{i,:}\|^2_2} \ ({\rm by\ } {\rm trace} (uu^T)=\|u\|_2^2 {\rm \ for\  } \forall u\in R^{n})
\end{align*}
that
\begin{equation*}
\left\|Y^{(k+1)} - Y^*\right\|_F^2= \left\| Y^{(k)} -Y^*\right\|_F^2 - \frac{\left\|  A_{i,:} Y^{(k)} -C_{i,:} \right\|_2^2}{\|A_{i,:}\|^2_2}.
\end{equation*}
By taking the conditional expectation, we have
\begin{align}\label{eq15}
E_k\left[\left\|Y^{(k+1)} - Y^*\right\|_F^2\right]=&E_k\left[\left\| Y^{(k)} -Y^*\right\|_F^2-\frac{\left\|  A_{i,:} Y^{(k)} -C_{i,:} \right\|_2^2}{\|A_{i,:}\|^2_2}\right] \notag\\
  & =  \left\| Y^{(k)} -Y^*\right\|_F^2 - \sum\limits_{i=1}^m \frac{\|A_{i,:}\|^2_2}{\|A\|_F^2}\frac{\left\|  A_{i,:} Y^{(k)} -C_{i,:} \right\|_2^2}{\|A_{i,:}\|^2_2} \notag\\
  & = \left\| Y^{(k)} -Y^*\right\|_F^2- \frac{\left\|A Y^{(k)} -AY^*\right\|_F^2}{\|A\|_F^2} \notag\\
  & \le \left\| Y^{(k)} -Y^*\right\|_F^2- \frac{\sigma^2_{\min}(A)}{\|A\|_F^2}\left\| Y^{(k)} -Y^*\right\|_F^2 \ ( {\rm by\  Lemma}\ \ref{lem3}) \notag\\
 & =  \left ( 1- \frac{\sigma^2_{\min}(A)}{\|A\|_F^2}\right) \left\| Y^{(k)} - Y^*\right\|_F^2, \ k\ge 0.
\end{align}
Finally, by  (\ref{eq15}) and induction on the iteration index $k$, we straightforwardly obtain the estimate (\ref{AY=C}). This completes the proof.
\end{proof}

Similarly, we can get the following convergence result of RK method for the matrix equation $ B^TX^T=( Y^*)^T $.
\begin{lemma}\label{lem4}
Let $X^*= A^+CB^+$. $\tilde{X}$ is generated by running one-step RK update for solving the matrix equation $ B^TX^T=( Y^*)^T $ starting from any matrix $\hat{X}\in R^{p\times q}$ in which $(\hat{X}_{i,:})^T\in R(B),\ i=1,\ldots,p$. Then it holds
\begin{equation}\label{e28}
E\left[\left\|\tilde{X}-A^+CB^+\right\|_F^2\right]\leq \rho_2\left\|\hat{X}-A^+CB^+\right\|_F^2,
\end{equation}
where the $j$th column of $B$ is selected with probability $\hat{p}_j(B)=\frac{\|B_{:,j}\|_2^2}{ \|B \|^2_F}$ and $\rho_2=1-\frac{\sigma^2_{\min}(B)}{ \|B \|^2_F}$.
\end{lemma}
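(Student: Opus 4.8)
The plan is to reduce the statement to the already-proved row version, Lemma \ref{th1}, by transposing. Transposing the update (\ref{e24}), one step of (\ref{e24}) applied to $X$ starting from $\hat X$ is exactly one step of the row-projection recursion (\ref{e23}) applied to $X^T$ starting from $\hat X^T$, for the equation
$$B^T X^T=(Y^*)^T,$$
i.e.\ it is (\ref{e23}) with the substitution $A\leftarrow B^T$, $Y\leftarrow X^T$, $C\leftarrow (Y^*)^T$, and with the row index equal to the chosen column index $j$ of $B$. The sampling probabilities match: $\|(B^T)_{j,:}\|_2^2=\|B_{:,j}\|_2^2$ and $\|B^T\|_F=\|B\|_F$, so $p_j(B^T)=\|(B^T)_{j,:}\|_2^2/\|B^T\|_F^2=\hat p_j(B)$. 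Hence the one-step estimate established inside the proof of Lemma \ref{th1} (inequality (\ref{eq15})) transfers verbatim once its hypotheses are checked for the transposed data, giving $\rho_2=1-\sigma_{\min}^2(B^T)/\|B^T\|_F^2=1-\sigma_{\min}^2(B)/\|B\|_F^2$ since $\sigma_{\min}(B^T)=\sigma_{\min}(B)$.

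Before invoking Lemma \ref{th1} I would verify its two hypotheses in the transposed setting. First, the range condition: Lemma \ref{th1} requires each column of the initial matrix $X^T$ to lie in $R((B^T)^T)=R(B)$; a column of $X^T$ is a transposed row of $X$, so this is precisely the assumption $(\hat X_{i,:})^T\in R(B)$, $i=1,\ldots,p$. Second, I need $(X^*)^T=(A^+CB^+)^T$ to be a solution of the transposed equation, equivalently $X^*B=Y^*$; this is where consistency of $AXB=C$ enters. Since $C=AXB$ for some $X$, every row of $C$ lies in the row space $R(B^T)$, hence so does every row of $Y^*=A^+C$ (its rows are combinations of the rows of $C$); therefore $Y^*B^+B=Y^*$, and consequently $X^*B=A^+CB^+B=A^+C=Y^*$. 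The same computation shows $(X^*_{i,:})^T=((B^T)^+(Y^*)^T)_{:,i}\in R((B^T)^+)=R(B)$, so the difference $\hat X-X^*$ inherits the range condition.

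With these checks, applying (\ref{eq15}) (equivalently Lemma \ref{th1}) to $B^T,X^T,(Y^*)^T$ yields $E[\|\tilde X^T-(X^*)^T\|_F^2]\le\rho_2\|\hat X^T-(X^*)^T\|_F^2$, which is (\ref{e28}) after using $\|M^T\|_F=\|M\|_F$. Alternatively one may give the argument directly, mirroring the three computations in the proof of Lemma \ref{th1}: the Pythagorean identity for $\|\hat X-X^*\|_F^2$ in terms of $\|\hat X-\tilde X\|_F^2$ and $\|\tilde X-X^*\|_F^2$; the vanishing of the cross term $\langle\tilde X-\hat X,\tilde X-X^*\rangle_F=0$, which uses $\tilde X B_{:,j}=Y^*_{:,j}=X^*B_{:,j}$; and $\|\hat X-\tilde X\|_F^2=\|\hat X B_{:,j}-Y^*_{:,j}\|_2^2/\|B_{:,j}\|_2^2$; then take expectation over $j\sim\hat p_j(B)$ and bound $\|(\hat X-X^*)B\|_F^2\ge\sigma_{\min}^2(B)\|\hat X-X^*\|_F^2$ by Lemma \ref{lem3} applied to $B^T$. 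I expect the only non-mechanical point to be the consistency verification $X^*B=Y^*$ together with the matching range condition for $\hat X-X^*$; once these are in place the estimate is a transpose of work already done.
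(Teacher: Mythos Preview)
Your proposal is correct and follows the same route the paper intends: the paper states Lemma~\ref{lem4} immediately after Lemma~\ref{th1} with only the one-line justification ``Similarly, we can get the following convergence result of RK method for the matrix equation $B^TX^T=(Y^*)^T$,'' i.e.\ transpose and repeat. Your write-up is in fact more careful than the paper's, since you explicitly verify the consistency identity $X^*B=A^+CB^+B=A^+C=Y^*$ needed for the cross-term to vanish; the paper only records this identity later, inside the proof of Theorem~\ref{t21}.
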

\begin{lemma}\label{lem2}
Let $ \tilde{H}_j=\left\{X\in R^{p\times q}: XB_{:,j}=Y^*_{:,j}\right\} $ be the subspaces consisting of the solutions to the unperturbed equations, and let $ \hat{H}_j=\left\{X\in R^{p\times q}: XB_{:,j}=Y^{(k+1)}_{:,j}\right\} $ be the solutions spaces of the noisy equations. Then $ \hat{H}_j=\left\{ W+ \alpha_j^{(k+1)} B^T_{:,j}, W \in \tilde{H}_j \right\} $, where $\alpha_j^{(k+1)} = \frac{Y^{(k+1)}_{:,j}-Y^{*}_{:,j}}{\|B_{:,j}\|^2_2}$.
\end{lemma}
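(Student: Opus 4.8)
The plan is to characterize both affine subspaces explicitly and then compare them. First I would recall that $\tilde{H}_j$ is the solution set of the linear (in $X$) equation $XB_{:,j} = Y^*_{:,j}$: it is an affine subspace whose direction space is $\{Z \in R^{p\times q} : ZB_{:,j} = 0\}$. Likewise $\hat{H}_j$ is the solution set of $XB_{:,j} = Y^{(k+1)}_{:,j}$, an affine subspace with the *same* direction space $\{Z : ZB_{:,j} = 0\}$, since only the right-hand side has changed. So the two sets are parallel translates of one another, and it suffices to exhibit a single vector that carries one onto the other, i.e. to show that adding $\alpha_j^{(k+1)} B^T_{:,j}$ to any $W \in \tilde{H}_j$ lands in $\hat{H}_j$, and conversely that every element of $\hat{H}_j$ arises this way.

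The key computation is the forward inclusion. Take $W \in \tilde{H}_j$, so $WB_{:,j} = Y^*_{:,j}$, and set $X = W + \alpha_j^{(k+1)} B^T_{:,j}$ with $\alpha_j^{(k+1)} = \dfrac{Y^{(k+1)}_{:,j} - Y^*_{:,j}}{\|B_{:,j}\|_2^2}$. Then
\[
XB_{:,j} = WB_{:,j} + \alpha_j^{(k+1)} B^T_{:,j} B_{:,j} = Y^*_{:,j} + \frac{Y^{(k+1)}_{:,j} - Y^*_{:,j}}{\|B_{:,j}\|_2^2}\,\|B_{:,j}\|_2^2 = Y^{(k+1)}_{:,j},
\]
using $B^T_{:,j} B_{:,j} = \|B_{:,j}\|_2^2$ (here $\alpha_j^{(k+1)}$ is a column vector in $R^p$, and the product $\alpha_j^{(k+1)} B^T_{:,j}$ is the $p\times q$ rank-one matrix). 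Hence $X \in \hat{H}_j$, which gives $\{W + \alpha_j^{(k+1)} B^T_{:,j} : W \in \tilde{H}_j\} \subseteq \hat{H}_j$.

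For the reverse inclusion, take any $X \in \hat{H}_j$ and put $W = X - \alpha_j^{(k+1)} B^T_{:,j}$; the same one-line computation in reverse shows $WB_{:,j} = Y^{(k+1)}_{:,j} - (Y^{(k+1)}_{:,j} - Y^*_{:,j}) = Y^*_{:,j}$, so $W \in \tilde{H}_j$ and $X = W + \alpha_j^{(k+1)} B^T_{:,j}$ lies in the claimed set. The two inclusions together give the stated equality $\hat{H}_j = \{W + \alpha_j^{(k+1)} B^T_{:,j},\ W \in \tilde{H}_j\}$. There is no real obstacle here — the only things to be careful about are the shapes of the objects ($\alpha_j^{(k+1)}$ is a vector, not a scalar, so $\alpha_j^{(k+1)} B^T_{:,j}$ is a matrix) and the harmless fact that $B_{:,j} \neq 0$ by the standing assumption that $B$ has no zero column, which is what makes the division by $\|B_{:,j}\|_2^2$ legitimate.
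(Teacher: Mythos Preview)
Your proof is correct and follows essentially the same two-inclusion argument as the paper: verify $(W+\alpha_j^{(k+1)}B_{:,j}^T)B_{:,j}=Y^{(k+1)}_{:,j}$ for $W\in\tilde{H}_j$, then for $V\in\hat{H}_j$ check that $W:=V-\alpha_j^{(k+1)}B_{:,j}^T$ lies in $\tilde{H}_j$. The additional framing about parallel affine subspaces and the remarks on the shape of $\alpha_j^{(k+1)}$ and nonzeroness of $B_{:,j}$ are helpful clarifications but not substantive departures.
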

\begin{proof}
First, if $W\in \tilde{H}_j$, then $$ (W+ \alpha_j^{(k+1)}  B^T_{:,j})B_{:,j} =W B_{:,j} + \alpha_j^{(k+1)} B^T_{:,j} B_{:,j} = Y^*_{:,j}+Y^{(k+1)}_{:,j} - Y^*_{:,j}= Y^{(k+1)}_{:,j},$$so $W+ \alpha_j^{(k+1)} B^T_{:,j} \in \hat{H}_j$.

Next, let $V \in \hat{H}_j$. Set $W= V-\alpha_j^{(k+1)} B^T_{:,j}$, then $$WB_{:,j}=(V-\alpha_j^{(k+1)} B^T_{:,j})B_{:,j}=VB_{:,j}-\alpha_j^{(k+1)} \|B_{:,j}\|_2^2 =Y^{(k+1)}_{:,j}-( Y^{(k+1)}_{:,j}-Y^{*}_{:,j})  = Y^*_{:,j},$$ so $W\in \tilde{H}_j$. This completes the proof.
\end{proof}

We present the convergence result of Algorithm \ref{alg21} in the following theorem.
\begin{theorem}\label{t21}
The sequence $\{X^{(k)}\}$ generated by Algorithm \ref{alg21} starting from the initial matrix $X^{(0)}\in R^{p\times q}$ and $Y^{(0)}=X^{(0)}B$, converges linearly to the solution $X^*=A^+CB^+$ of the consistent  matrix Eq. (\ref{e11}) in mean square if $(X^{(0)}_{i,:})^T\in R(B),\ i=1,\ldots,p$ and $Y^{(0)}_{:,j}\in R(A^T),\ j=1,\ldots, n$. Moreover, the following relationship holds
\begin{equation}
E \left[\left\|X^{(k)}-A^+CB^+\right\|_F^2 \right]
 \le \left(1+\frac{ \sigma^2_{\max}(B)\eta }{\|B \|^2_F}\right) \rho_2^{k} \left\|X^{(0)}-A^+CB^+\right\|_F^2,
\end{equation}
where the $i$th row of $A$ is selected with probability $p_i(A)=\frac{\|A_{i,:}\|_2^2}{\|A \|^2_F}$, the $j$th column of $B$ is selected with probability $\hat{p}_j(B)=\frac{\|B_{:,j}\|_2^2}{ \|B \|^2_F}$, and $
 \eta= \left\{
\begin{array}{ll}
\frac{ \rho_1 }{\rho_2-\rho_1}, & if \  \rho_1<\rho_2,\\
\frac{ \rho_1 }{\rho_1-\rho_2}\left[ \left(\frac{\rho_1}{\rho_2}\right)^{k}-1\right], & if \  \rho_1>\rho_2,\vspace{0.2cm}\\
{k}, & if \  \rho_1=\rho_2.
\end{array}
 \right.
$
\end{theorem}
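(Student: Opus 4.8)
The plan is to read the $X$-update in Algorithm \ref{alg21} as a \emph{perturbed} (noisy) Kaczmarz step for $B^TX^T=(Y^*)^T$, where $Y^*=A^+C$: instead of projecting $X^{(k)}$ onto the true hyperplane $\tilde H_j=\{X:XB_{:,j}=Y^*_{:,j}\}$, the algorithm projects onto $\hat H_j=\{X:XB_{:,j}=Y^{(k+1)}_{:,j}\}$. Since the linear decay of the perturbation $Y^{(k)}-Y^*$ is already quantified by Lemma \ref{th1}, the argument reduces to combining a one-step contraction with a controlled accumulation of this error.

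First I would collect preliminaries. Consistency, i.e. $C=AA^+CB^+B$, yields $A^+C=A^+CB^+B$, hence $Y^*=X^*B$ and $X^*B_{:,j}=Y^*_{:,j}$, so $X^*\in\tilde H_j$ for every $j$; also the initial conditions propagate, so $(X^{(k)}_{i,:})^T\in R(B)$ for all $i$ and $Y^{(k)}_{:,j}\in R(A^T)$ for all $j$ and all $k$ (each update only appends multiples of $B_{:,j}^T$ to the rows of $X^{(k)}$, respectively multiples of $A_{i,:}^T\in R(A^T)$ to the columns of $Y^{(k)}$). By Lemma \ref{lem2}, $\hat H_j=\tilde H_j+\alpha_j^{(k+1)}B_{:,j}^T$ with $\alpha_j^{(k+1)}=(Y^{(k+1)}_{:,j}-Y^*_{:,j})/\|B_{:,j}\|_2^2$; since $\alpha_j^{(k+1)}B_{:,j}^T$ is Frobenius-orthogonal to the direction space $\{X:XB_{:,j}=0\}$ of $\tilde H_j$, the projection of $X^{(k)}$ onto $\hat H_j$ is the projection $\bar X^{(k+1)}$ of $X^{(k)}$ onto $\tilde H_j$ shifted by $\alpha_j^{(k+1)}B_{:,j}^T$. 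Thus
\[
X^{(k+1)}-X^*=(\bar X^{(k+1)}-X^*)+\alpha_j^{(k+1)}B_{:,j}^T,
\]
and because $\bar X^{(k+1)}-X^*$ lies in the direction space of $\tilde H_j$ while $\alpha_j^{(k+1)}B_{:,j}^T$ is normal to it, the Pythagorean theorem in the Frobenius inner product gives
\[
\left\|X^{(k+1)}-X^*\right\|_F^2=\left\|\bar X^{(k+1)}-X^*\right\|_F^2+\frac{\left\|Y^{(k+1)}_{:,j}-Y^*_{:,j}\right\|_2^2}{\|B_{:,j}\|_2^2}.
\]

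Next I would take expectations, integrating over the column index $j_k$ first (note $Y^{(k+1)}$ depends on $i_k$ but not on $j_k$, and $i_k,j_k$ are independent). The first term is exactly one RK step for $B^TX^T=(Y^*)^T$ from $X^{(k)}$, so Lemma \ref{lem4} (applied conditionally, using $(X^{(k)}_{i,:})^T\in R(B)$) bounds its conditional expectation by $\rho_2\|X^{(k)}-X^*\|_F^2$; for the second term, averaging against $\hat p_j(B)=\|B_{:,j}\|_2^2/\|B\|_F^2$ cancels the denominators and gives $\|Y^{(k+1)}-Y^*\|_F^2/\|B\|_F^2$. Taking the full expectation and using Lemma \ref{th1} (with index $k+1$, since $Y^*=A^+C$) yields the scalar recursion $a_{k+1}\le\rho_2\,a_k+\rho_1^{k+1}d$, where $a_k=E[\|X^{(k)}-X^*\|_F^2]$ and $d=\|Y^{(0)}-Y^*\|_F^2/\|B\|_F^2\le\sigma_{\max}^2(B)\|X^{(0)}-X^*\|_F^2/\|B\|_F^2$ (the last inequality because $Y^{(0)}-Y^*=(X^{(0)}-X^*)B$).

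Finally I would unroll the recursion to get $a_k\le\rho_2^k a_0+d\,\rho_2^k\sum_{t=1}^k(\rho_1/\rho_2)^t$ and evaluate the geometric sum in the three regimes $\rho_1<\rho_2$, $\rho_1>\rho_2$, $\rho_1=\rho_2$; this reproduces precisely the quantity $\eta$ in the statement, and substituting the bound for $d$ gives the claimed inequality. I expect the one delicate point to be the orthogonal splitting above: the content of Lemma \ref{lem2} is that the error introduced by using $Y^{(k+1)}$ in place of $Y^*$ points along $B_{:,j}^T$, i.e. normal to $\tilde H_j$, so it adds in quadrature rather than degrading the contraction factor; once that is in place, the remainder is bookkeeping — the ordering of the conditional expectations and the elementary solution of the linear recurrence.
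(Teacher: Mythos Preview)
Your proposal is correct and follows essentially the same route as the paper: decompose $X^{(k+1)}-X^*$ into the unperturbed projection error $\bar X^{(k+1)}-X^*$ (the paper's $\tilde X^{(k+1)}-X^*$) plus the normal shift $\alpha_j^{(k+1)}B_{:,j}^T$ coming from Lemma~\ref{lem2}, use Frobenius orthogonality to get the Pythagorean identity, apply Lemma~\ref{lem4} and Lemma~\ref{th1} after taking conditional expectation over $j$ then $i$, and unroll the resulting recursion $a_{k+1}\le\rho_2 a_k+\rho_1^{k+1}d$ with $d\le\sigma_{\max}^2(B)\|X^{(0)}-X^*\|_F^2/\|B\|_F^2$. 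The only cosmetic difference is that you phrase the orthogonal splitting in terms of affine direction spaces, whereas the paper verifies it by a direct trace computation; the substance is identical.
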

\begin{proof}
Let $X^{(k)}$ denote the $k$th iterate of randomized Kaczmarz method (\ref{e24}), and $\hat{H}_j$ be the solution space chosen in the $(k+1)$th iteration. Then $X^{(k+1)}$ is the orthogonal projection of $X^{(k)}$ onto $\hat{H}_j$. Let $\tilde{X}^{(k+1)}$ denote the orthogonal projection of $X^{(k)}$ onto $\tilde{H}_j$. By using (\ref{e24}) and Lemma \ref{lem2}, we have that
\begin{align*}
X^{(k+1)} & = X^{(k)}+\frac{Y_{:,j}^{(k+1)}-X^{(k)}B_{:,j}}{\|B_{:,j}\|^2_2}B_{:,j}^T\\
& = X^{(k)}+\frac{Y^*_{:,j}-X^{(k)}B_{:,j}}{\|B_{:,j}\|^2_2}B_{:,j}^T + \frac{Y_{:,j}^{(k+1)}-Y^*_{:,j}}{\|B_{:,j}\|^2_2}B_{:,j}^T\\
& = \tilde{X}^{(k+1)} + \frac{Y_{:,j}^{(k+1)}-Y^*_{:,j}}{\|B_{:,j}\|^2_2}B_{:,j}^T.
\end{align*}
Then
\begin{align*}
\langle X^{(k+1)}- \tilde{X}^{(k+1)},  \tilde{X}^{(k+1)}-X^* \rangle_F & =\left\langle\frac{Y_{:,j}^{(k+1)}-Y^*_{:,j}}{\|B_{:,j}\|^2_2}B_{:,j}^T,  \tilde{X}^{(k+1)}-X^* \right\rangle_F \\
& = {\rm trace}\left( B_{:,j} \frac{( Y_{:,j}^{(k+1)}-Y^*_{:,j})^T}{\|B_{:,j}\|^2_2}(\tilde{X}^{(k+1)}-X^* ) \right) \\
& = {\rm trace}\left((\tilde{X}^{(k+1)}-X^* ) B_{:,j} \frac{( Y_{:,j}^{(k+1)}-Y^*_{:,j})^T}{\|B_{:,j}\|^2_2} \right)\\
 & \ \ \ \ ({\rm \ by \ }{\rm trace}(MN)={\rm trace}(NM)\ {\rm for \ any \ matrices\ }M,N )\\
 & =0 \ ({\rm \ by \ } \tilde{X}^{(k+1)} B_{:,j} = Y^*_{:,j}, X^* B_{:,j}= Y^*_{:,j} ),
\end{align*}
and
\begin{align*}
\left\|X^{(k+1)}-\tilde{X}^{(k+1)}\right\|_F^2  & =\left \|\frac{ Y_{:,j}^{(k+1)}-Y^*_{:,j}}{\|B_{:,j}\|^2_2} B_{:,j}^T\right\|_F^2 = \frac{1}{\|B_{:,j}\|_2^4}{\rm trace}( B_{:,j}( Y_{:,j}^{(k+1)}-Y^*_{:,j})^T ( Y_{:,j}^{(k+1)}-Y^*_{:,j}) B_{:,j}^T )\\
& = \frac{\|Y_{:,j}^{(k+1)}-Y^*_{:,j}\|_2^2}{\|B_{:,j}\|_2^4} {\rm trace}( B_{:,j}B_{:,j}^T )= \frac{\|Y_{:,j}^{(k+1)}-Y^*_{:,j}\|_2^2}{\|B_{:,j}\|_2^2}.
\end{align*}
Therefore,
\begin{align*}
\left\|X^{(k+1)}-X^*\right\|_F^2 & = \left\|X^{(k+1)}-\tilde{X}^{(k+1)}\right\|_F^2 + \left\|\tilde{X}^{(k+1)}-X^*\right\|_F^2 =\left\|\tilde{X}^{(k+1)}-X^*\right\|_F^2 + \frac{\|Y_{:,j}^{(k+1)}-Y^*_{:,j}\|_2^2}{\|B_{:,j}\|_2^2}.
\end{align*}
By taking the conditional expectation on both side of this equality, we can obtain
\begin{align}\label{exp}
E_k \left[\left\|X^{(k+1)}-X^*\right\|_F^2 \right] &  = E_k \left[\left\|\tilde{X}^{(k+1)}-X^*\right\|_F^2\right] + E_k \left[\frac{\|Y_{:,j}^{(k+1)}-Y^*_{:,j}\|_2^2}{\|B_{:,j}\|_2^2}\right].
\end{align}

Next, we give the estimates for the first and second parts of the right-hand side of the equality (\ref{exp}) respectively.
If $(X^{(0)}_{i,:})^T\in R(B),\ i=1,\ldots,p$, then $(X^{(0)}-A^+CB^+)^T_{i,:}\in R(B),\ i=1,\ldots,p$. It is easy to show that   $(X^{(k)}-A^+CB^+)^T_{i,:}\in R(B),\ i=1,\ldots,p$ by induction of (\ref{e24}). Then by Lemma \ref{lem4}, we have
\begin{align}\label{exp1}
E_k \left[\|\tilde{X}^{(k+1)}-A^+CB^+\|_F^2\right] \le \rho_2 \left\|X^{(k)}-A^+CB^+\right\|_F^2.
\end{align}
For the second part of the right-hand side of (\ref{exp}),  we have
\begin{align}\label{exp2}
 E_k \left[\frac{\|Y_{:,j}^{(k+1)}-Y^*_{:,j}\|_2^2}{\|B_{:,j}\|_2^2}\right]& = E_k^i E_k^j \left[ \frac{\|Y_{:,j}^{(k+1)}-Y^*_{:,j}\|_2^2}{\|B_{:,j}\|_2^2}\right]\notag\\
 &= E_k^i \left[\sum\limits_{j=1}^n\frac{1}{ \|B \|^2_F}\|Y_{:,j}^{(k+1)}-Y^*_{:,j}\|_2^2 \right]\notag\\
 & = \frac{1}{\|B \|^2_F }E_k^i \left[\left\|Y^{(k+1)} - Y^*\right\|_F^2 \right]\notag\\
 & = \frac{1}{\|B \|^2_F }E_k  \left[\left\|Y^{(k+1)} - Y^*\right\|_F^2 \right].
\end{align}
Substituting (\ref{exp1}), (\ref{exp2}) into (\ref{exp}), we can get
\begin{align*}
E_k \left[\left\|X^{(k+1)}-A^+CB^+\right\|_F^2 \right]
& \le  \rho_2 \left\|X^{(k)}-A^+CB^+\right\|_F^2 + \frac{1}{\|B \|^2_F }E_k  \left[\left\|Y^{(k+1)} - A^+C\right\|_F^2\right],
\end{align*}
Then applying this recursive relation iteratively and taking full expectation, we have
 \begin{align}\label{eq12}
E \left[\left\|X^{(k+1)}-A^+CB^+\right\|_F^2 \right]
& \le \rho_2 E  \left[ \left\|X^{(k)}-A^+CB^+\right\|_F^2\right] + \frac{1}{\|B \|^2_F }E  \left[\left\|Y^{(k+1)} - A^+C\right\|_F^2\right]\notag\\
& \le \rho_2 \left (\rho_2 E  \left[ \left\|X^{(k-1)}-A^+CB^+\right\|_F^2\right]+ \frac{1}{\|B \|^2_F }\rho_1^{k}  \left\|Y^{(0)}-A^+C\right\|_F^2 \right) \notag\\
&\ \ \ \ + \frac{1}{\|B \|^2_F }\rho_1^{k+1}   \left\|Y^{(0)}-A^+C\right\|_F^2 \ ({\rm by\  Lemma}\  \ref{th1})\notag\\& \le \cdots\notag\\
& \le \rho_2^{k+1} \left\|X^{(0)}-A^+CB^+\right\|_F^2 +  \frac{\sum\limits_{j=0}^{k}\rho_1^{j+1}\rho_2^{k-j} }{\|B \|^2_F} \left\|Y^{(0)}-A^+C\right\|_F^2.
\end{align}
Since  $X^*=A^+CB^+$ is the minimal $F$-norm solution of the consistent matrix equation $AXB=C$, so $A A^+CB^+ B = C $. It yields $A^+ A A^+CB^+ B =A^+ C $, then $A^+CB^+ B =A^+ C$. Combining with $Y^{(0)}=X^{(0)}B$, we can get $$\left\|Y^{(0)}-A^+C\right\|_F^2 = \|(X^{(0)}-A^+CB^+)B\|_F^2 \le \sigma^2_{\max}(B)\left\|X^{(0)}-A^+CB^+\right\|_F^2.$$
Substituting this inequality into (\ref{eq12}), then
 \begin{align}\label{eq11}
E \left[\left\|X^{(k+1)}-A^+CB^+\right\|_F^2 \right]
 & \le \left(\rho_2^{k+1}+\frac{ \sigma^2_{\max}(B)}{\|B \|^2_F}  \sum\limits_{j=0}^{k}\rho_1^{j+1}\rho_2^{k-j} \right) \left\|X^{(0)}-A^+CB^+\right\|_F^2 \notag \\
 & = \left(1+\frac{ \sigma^2_{\max}(B)}{\|B \|^2_F} \sum\limits_{j=0}^{k}(\frac{\rho_1}{\rho_2})^{j+1} \right) \rho_2^{k+1} \left\|X^{(0)}-A^+CB^+\right\|_F^2.
\end{align}
If $\rho_1<\rho_2$, then $\sum\limits_{j=0}^{k}(\frac{\rho_1}{\rho_2})^{j+1}  \le  \frac{\frac{\rho_1}{\rho_2}}{1-\frac{\rho_1}{\rho_2}}=\frac{\rho_1}{\rho_2-\rho_1} $. Therefore, (\ref{eq11}) becomes
\begin{align*}
E \left[\left\|X^{(k+1)}-A^+CB^+\right\|_F^2 \right]
& \le \left(1+\frac{ \rho_1 }{\rho_2-\rho_1} \frac{ \sigma^2_{\max}(B) }{\|B \|^2_F} \right) \rho_2^{k+1}\left\|X^{(0)}-A^+CB^+\right\|_F^2.
\end{align*}
If $\rho_1>\rho_2$, $\sum\limits_{j=0}^{k}(\frac{\rho_1}{\rho_2})^{j+1} = \frac{\frac{\rho_1}{\rho_2}-(\frac{\rho_1}{\rho_2})^{k+2}}{1-\frac{\rho_1}{\rho_2}}=\frac{\rho_1}{\rho_1-\rho_2}(\frac{\rho_1}{\rho_2})^{k+1}$.
Therefore, (\ref{eq11}) becomes
\begin{align*}
E \left[\left\|X^{(k+1)}-A^+CB^+\right\|_F^2 \right]
& \le \left(1+\frac{  \rho_1 }{\rho_1-\rho_2}\frac{ \sigma^2_{\max}(B) }{\|B \|^2_F} \left(\frac{\rho_1}{\rho_2}\right)^{k+1} \right) \rho_2^{k+1} \left\|X^{(0)}-A^+CB^+\right\|_F^2.
\end{align*}
If $\rho_1=\rho_2 $, $\sum\limits_{j=0}^{k}(\frac{\rho_1}{\rho_2})^{j+1}= k+1$. Therefore, (\ref{eq11}) becomes
\begin{align*}
E \left[\left\|X^{(k+1)}-A^+CB^+\right\|_F^2 \right]
& \le \left(1+(k+1)  \frac{ \sigma^2_{\max}(B) }{\|B \|^2_F}\right)\rho_2^{k+1} \left\|X^{(0)}-A^+CB^+\right\|_F^2.
\end{align*}
This completes the proof.
\end{proof}

\begin{remark}
Algorithm 2.1 has the advantage that $X^{(k)}$ and $Y^{(k)}$ can be iteratively solved at the same time, or the approximate value of $Y^*=A^+C$ can be iteratively obtained first, and then the approximate value of $X^*=A^+CB^+$ can be iteratively solved.

Generally, if we take $X^{(0)}=0\in R^{p\times q}$ and $Y^{(0)}=0\in R^{p\times n}$, the initial conditions are all satisfied ($0\in R(A^T)$, $0\in R(B)$ and $Y^{(0)}=X^{(0)}B$).
\end{remark}

\section{Coordinate Descent (CD) Method for Inconsistent Case}
If the matrix Eq. (\ref{e11}) is inconsistent, there is no solution to the equation. Now let's consider the least-squares solution of the matrix Eq. (\ref{e11}). Obviously, $X^*=A^+CB^+$ is the unique minimal $F$-norm least-squares solution of the matrix Eq. (\ref{e11}), that is, $$X^*=A^+CB^+=\arg\min\left\{\|X\|_F: \ X\in\arg\min\limits_{X\in R^{p\times q}}\|AXB-C\|_F\right\}.$$

If $A$ is full column rank and $B$ is full row rank, the matrix Eq. (\ref{e11}) has a unique least-squares solution $X^*=(A^TA)^{-1}A^TCB^T(BB^T)^{-1}$. In general, the matrix Eq. (\ref{e11}) has multiple least-squares solutions.
Assume that $A$ has no column of all zeros and $B$ has no row of all zeros. Now we will find $X^*$ with the coordinate descent method (or Gauss-Seidel method).

If a linear system of equations $Ax=b$ is inconsistent, where $A\in R^{m\times p}$ and $r(A)=p$ ($p\leq m$), the RGS (RCD) method \cite{LL10} below is a very effective method to solve its least-squares solution.
\begin{equation}\label{e31}
\alpha_k=\frac{A_{:,j}^T r^{(k)}}{\|A_{:,j}\|_2^2}, \ x^{(k+1)}_j=x^{(k)}_j+\alpha_k, \  r^{(k+1)}=r^{(k)}-\alpha_k A_{:,j}, \ \ \hat{p}_j(A)=\frac{\|A_{:,j}\|_2^2}{\|A\|_F^2},
\end{equation}
where $x^{(0)}\in R^p$ is arbitrary and $r^{(0)}=b-Ax^{(0)}\in R^m$. Simultaneous $n$ iterative formulae for solving $AY_{:,l}=C_{:,l},\ l=1, \cdots, n$, we get
\begin{equation}\label{e32}
W^{(k)}=\frac{A_{:,j}^TR^{(k)}}{\|A_{:,j}\|_2^2}, \ Y^{(k+1)}_{j,:}=Y^{(k)}_{j,:}+W^{(k)}, \  R^{(k+1)}=R^{(k)}-A_{:,j}W^{(k)}, \ \ \hat{p}_j(A)=\frac{\|A_{:,j}\|_2^2}{\|A\|_F^2},
\end{equation}
where $Y^{(0)}\in R^{p\times n}$, $R^{(0)}=C-AY^{(0)}$. This is a column projection method to solve the least-squares solution of $AY=C$ and the cost of each iteration of the method is $4mn+n$ if the square of the column norm of $A$ is pre-computed in advance.

Similarly, we can solve the least-squares solution of $B^TX^T=(Y^{(k+1)})^T$ by the RGS method.
\begin{equation}\label{e33}
U^{(k)}=\frac{E^{(k)}B_{i,:}^T}{\|B_{i,:}\|_2^2},  \ X^{(k+1)}_{:,i}=X^{(k)}_{:,i}+U^{(k)},  \ E^{(k+1)}=E^{(k)}-U^{(k)}B_{i,:}+I_{:,i}W^{(k)},  \ p_i(B)=\frac{\|B_{i,:}\|_2^2}{\|B\|_F^2},
\end{equation}
where $X^{(0)}\in R^{p\times q}$, $E^{(0)}=Y^{(1)}-X^{(0)}B$. This is a row projection method and the cost of each iteration of the method is $4np+n+p$ if the square of the row norm of $B$ is pre-computed in advance.

With (\ref{e32}) and (\ref{e33}), we can get a RGS method for solving (\ref{e11}) as follows, which is called the IME-RGS algorithm.
\begin{algorithm}[H]
  \leftline{\caption{RGS Method for Inconsistent Matrix Equation $AXB=C$ (IME-RGS)\label{alg31}}}
  \begin{algorithmic}[1]
    \Require
      $A\in R^{m\times p}$, $B\in R^{q\times n}$, $C\in R^{m\times n}$, $X^{(0)}\in R^{p\times q}$, $Y^{(0)}=X^{(0)}B$, $R^{(0)}=C-AY^{(0)}$, $E^{(-1)}=0\in R^{p\times n}$, $K\in R$
    \State For $j=1:p$, $M(j)=\|A_{:,j}\|_2^2$
    \State For $i=1:q$, $N(i)=\|B_{i,:}\|_2^2$
    \For {$k=0,1,2,\cdots, K-1$}
    \State Pick $j$ with probability $\hat{p}_{j}(A)=\frac{\|A_{:,j}\|_2^2}{\|A\|^2_F}$   and $i$ with probability $p_{i}(B)=\frac{\|B_{i,:}\|_2^2}{\|B\|^2_F}$
    \State Compute {\small $W^{(k)}=\frac{A_{:,j}^TR^{(k)}}{M(j)}$, $Y^{(k+1)}_{j,:}=Y^{(k)}_{j,:}+W^{(k)}$, $R^{(k+1)}=R^{(k)}-A_{:,j}W^{(k)}$, $E^{(k)}_{j,:}=E^{(k-1)}_{j,:}+W^{(k)}$}
    \State Compute $U^{(k)}=\frac{E^{(k)}B_{i,:}^T}{N(i)}$, $X^{(k+1)}_{:,i}=X^{(k)}_{:,i}+U^{(k)}$, $E^{(k+1)}=E^{(k)}-U^{(k)}B_{i,:}$
    \EndFor
    \State Output $X^{(K)}$
  \end{algorithmic}
\end{algorithm}

In order to prove the convergence of Algorithm \ref{alg31}, we need the following preparations.

\begin{lemma}\label{lem31}
Let $Y^*= A^+C$. The sequence $\{Y^{(k)}\}$ is generated by  (\ref{e32}) starting from the initial matrix $Y^{(0)}\in R^{p\times n}$, then it holds
\begin{equation}\label{e34}
E\left[\|A Y^{(k)}-A A^+C\|_F^2\right]\leq \rho_1^{k}\|AY^{(0)}-AA^+C\|_F^2,
\end{equation}
where the $j$th column of $A$ is selected with probability $\hat{p}_j(A)=\frac{\|A_{:,j}\|_2^2}{\|A\|_F^2}$.
\end{lemma}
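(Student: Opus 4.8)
The plan is to monitor not the residual $AY^{(k)}-C$ (the sub-system $AY=C$ may itself be inconsistent) but the quantity $D^{(k)}:=AY^{(k)}-AA^+C$, i.e.\ the distance of $AY^{(k)}$ from the orthogonal projection of $C$ onto $R(A)$. I would show that one step of (\ref{e32}) acts on $D^{(k)}$ exactly as a randomized orthogonal projection, and then recycle verbatim the expectation computation from the proof of Lemma \ref{th1}.

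First I would derive the recursion for $D^{(k)}$. From (\ref{e32}), $AY^{(k+1)}=AY^{(k)}+A_{:,j}W^{(k)}$ with $W^{(k)}=A_{:,j}^TR^{(k)}/\|A_{:,j}\|_2^2$ and $R^{(k)}=C-AY^{(k)}$, hence
$$D^{(k+1)}=D^{(k)}+\frac{A_{:,j}A_{:,j}^T}{\|A_{:,j}\|_2^2}R^{(k)}.$$
The key point is that $R^{(k)}=(C-AA^+C)-D^{(k)}$ and $C-AA^+C$ is orthogonal to $R(A)\ni A_{:,j}$, so $A_{:,j}^TR^{(k)}=-A_{:,j}^TD^{(k)}$, giving
$$D^{(k+1)}=\left(I-\frac{A_{:,j}A_{:,j}^T}{\|A_{:,j}\|_2^2}\right)D^{(k)};$$
that is, $D^{(k+1)}$ is $D^{(k)}$ with the orthogonal projector onto $\mathrm{span}(A_{:,j})^{\perp}$ applied columnwise. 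Since the cross term $\langle D^{(k+1)},D^{(k)}-D^{(k+1)}\rangle_F$ is the trace of a product of an orthogonal projector with its complement and hence vanishes, the Pythagorean identity yields
$$\|D^{(k+1)}\|_F^2=\|D^{(k)}\|_F^2-\frac{\|A_{:,j}^TD^{(k)}\|_2^2}{\|A_{:,j}\|_2^2}.$$

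Next I would take the conditional expectation $E_k$ with $j$ chosen with probability $\|A_{:,j}\|_2^2/\|A\|_F^2$, which collapses the sum over $j$ into a Frobenius norm:
$$E_k\left[\|D^{(k+1)}\|_F^2\right]=\|D^{(k)}\|_F^2-\frac{\|A^TD^{(k)}\|_F^2}{\|A\|_F^2}.$$
Because every column of $D^{(k)}=A(Y^{(k)}-A^+C)$ lies in $R(A)$, Lemma \ref{lem3} applied to $A^T$ (using $\sigma_{\min}(A^T)=\sigma_{\min}(A)$) gives $\|A^TD^{(k)}\|_F^2\ge\sigma_{\min}^2(A)\|D^{(k)}\|_F^2$, so $E_k[\|D^{(k+1)}\|_F^2]\le\rho_1\|D^{(k)}\|_F^2$; taking full expectation and inducting on $k$ then yields (\ref{e34}). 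The only delicate step is the splitting $R^{(k)}=(C-AA^+C)-D^{(k)}$ together with the cancellation $A_{:,j}^T(C-AA^+C)=0$: this removes the fixed, irreducible part of the residual and turns the coordinate-descent step into a genuine orthogonal projection of $D^{(k)}$, after which the argument is identical to that of Lemma \ref{th1}; no real obstacle remains beyond this observation, and in particular no hypothesis on $Y^{(0)}$ is needed.
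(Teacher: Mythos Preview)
Your proposal is correct and follows essentially the same route as the paper's proof. The paper shows $A_{:,j}^T(AY^{(k+1)}-AY^*)=0$ by a direct computation using the normal equations $A^TAY^*=A^TC$, which is equivalent to your splitting $R^{(k)}=(C-AA^+C)-D^{(k)}$ together with $A_{:,j}^T(C-AA^+C)=0$; from there both arguments invoke the same Pythagorean identity, the same expectation computation, and the same application of Lemma~\ref{lem3} (to $A^T$, with columns of $D^{(k)}$ in $R(A)$). Your explicit projection form $D^{(k+1)}=(I-A_{:,j}A_{:,j}^T/\|A_{:,j}\|_2^2)D^{(k)}$ is a slightly cleaner packaging of the same identity, but there is no substantive difference.
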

\begin{proof}
Since $Y^*= A^+C$ is the least-squares solution of $AY=C$, it yields $A^TAY^*=A^TC$. The coordinate descent updates  (\ref{e32}) for $AY=C$ can be written as
\begin{equation}\label{e35}
Y^{(k+1)}=Y^{(k)} + I_{:,j}\frac{ A_{:,j}^T R^{(k)} }{\|A_{:,j}\|^2_2},
\end{equation}
where $ R^{(k)}=C-AY^{(k)}$.
Then for $k=0,1,2,\cdots,$ we have
\begin{align*}
A_{:,j}^T(AY^{(k+1)}-AY^*) & = A_{:,j}^T(AY^{(k)}+A I_{:,j} \frac{A_{:,j}^T R^{(k)} }{\|A_{:,j}\|^2_2}-AY^*)\\
& = A_{:,j}^T AY^{(k)} +A_{:,j}^TA_{:,j}\frac{A_{:,j}^T R^{(k)} }{\|A_{:,j}\|^2_2}-A_{:,j}^TAY^*\\
& = A_{:,j}^T AY^{(k)} +A_{:,j}^TR^{(k)}-A_{:,j}^TC \ ({\rm by}\ A_{:,j}^TAY^*=A_{:,j}^TC)\\
&=0.
\end{align*}
It  follows that
$$\langle AY^{(k+1)}-AY^{(k)}, AY^{(k+1)}-AY^*\rangle_F=0,$$
and then
\begin{equation}\label{e36}
\|AY^{(k)}-AY^*\|_F^2=\|AY^{(k+1)}-AY^*\|_F^2+\|AY^{(k+1)}-AY^{(k)}\|_F^2.
\end{equation}
Again from (\ref{e35}) we obtain
\begin{align*}
\left\| AY^{(k+1)} - AY^{(k)}\right\|_F^2 & = \left\| A_{:,j}\frac{A_{:,j}^T R^{(k)} }{\|A_{:,j}\|^2_2} \right\|_F^2 \\
& = \frac{1}{\|A_{:,j}\|^4_2}{\rm trace} \left((R^{(k)})^T A_{:,j}A_{:,j}^TA_{:,j}A_{:,j}^TR^{(k)} \right)\\
&= \frac{\|A_{:,j}^TR^{(k)}\|_2^2}{\|A_{:,j}\|^2_2}.
\end{align*}
Substituting this equality into (\ref{e36}) and taking conditional expectation on both sides give
\begin{align}\label{e37}
E_k\left[\|AY^{(k+1)}-AY^*\|_F^2\right]=&E_k\left[\|AY^{(k)}-AY^*\|_F^2-\frac{\|A_{:,j}^TR^{(k)}\|_2^2}{\|A_{:,j}\|^2_2}\right] \notag\\
  & =  \|AY^{(k)}-AY^*\|_F^2 -  \sum\limits_{j=1}^p \frac{\|A_{:,j}\|_2^2}{\|A\|_F^2}\frac{\|A_{:,j}^TR^{(k)}\|_2^2}{\|A_{:,j}\|^2_2}\notag\\
  & = \|AY^{(k)}-AY^*\|_F^2 -\frac{1}{\|A\|_F^2}  \| A^T (AY^*-AY^{(k)})\|_F^2\notag\\
  & \le  \|AY^{(k)}-AY^*\|_F^2 -\frac{ \sigma^2_{\min}(A)}{\|A\|_F^2}  \|  AY^*-AY^{(k)}\|_F^2\notag\\
  & = \left( 1 -\frac{ \sigma^2_{\min}(A)}{\|A\|_F^2} \right) \|  AY^{(k)}- AY^*\|_F^2.
\end{align}
The inequality is obtained by using Lemma \ref{lem3}. Finally, by  (\ref{e37}) and induction on the iteration index $k$, we straightforwardly obtain the estimate (\ref{e34}). This completes the proof.
\end{proof}

\begin{lemma}\label{lem32}
Let $X^*= A^+CB^+$. $\tilde{X}$ is generated by running one-step RGS update for solving the matrix equation $ B^TX^T=( Y^*)^T $ starting from any matrix $\hat{X}\in R^{p\times q}$. Then it holds that
\begin{equation}\label{e378}
E[\|A( \tilde{X}- X^*)B\|_F^2]\leq \rho_2 \|A( \hat{X}- X^*)B\|_F^2,
\end{equation}
where the $i$th row of $B$ is selected with probability $ p_i(B)=\frac{\|B_{i,:}\|_2^2}{\|B\|_F^2}$.
\end{lemma}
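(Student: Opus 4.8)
The plan is to run the standard randomized coordinate‑descent contraction argument, but measured in the ``weighted'' norm $\|A(\cdot)B\|_F$, and to exploit that $X^\ast=A^+CB^+$ is a least‑squares solution of $XB=Y^\ast$.

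First I would write the one‑step RGS update explicitly. When the $i$th row of $B$ is chosen, the $X$‑update in (\ref{e33}) with the fixed right‑hand side $Y^\ast$ is $\tilde X=\hat X+U\,I_{i,:}$ with $U=\dfrac{(Y^\ast-\hat X B)B_{i,:}^T}{\|B_{i,:}\|_2^2}$, where $I_{i,:}$ denotes the $i$th row of $I_q$ (so $I_{i,:}B=B_{i,:}$). Hence $\tilde X-X^\ast=(\hat X-X^\ast)+U\,I_{i,:}$. The decisive step is to kill the ``noise'' carried by the residual of $Y^\ast$: since $Y^\ast=A^+C$ and $X^\ast=A^+CB^+$, we have $Y^\ast-X^\ast B=A^+C(I-B^+B)$, and because $B^+B$ is the orthogonal projector onto $R(B^T)$ it follows that $(Y^\ast-X^\ast B)B^T=0$, in particular $(Y^\ast-X^\ast B)B_{i,:}^T=0$. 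Therefore $U=-\dfrac{(\hat X-X^\ast)BB_{i,:}^T}{\|B_{i,:}\|_2^2}$, and a short computation with $I_{i,:}B=B_{i,:}$ gives
$$A(\tilde X-X^\ast)B = A(\hat X-X^\ast)B\Bigl(I_n-\frac{B_{i,:}^T B_{i,:}}{\|B_{i,:}\|_2^2}\Bigr).$$
The matrix in parentheses is an orthogonal projector, so by the Pythagorean identity for the Frobenius norm,
$$\|A(\tilde X-X^\ast)B\|_F^2 = \|A(\hat X-X^\ast)B\|_F^2-\frac{\|A(\hat X-X^\ast)BB_{i,:}^T\|_2^2}{\|B_{i,:}\|_2^2}.$$

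Next I would take the expectation over $i$ with probabilities $p_i(B)=\|B_{i,:}\|_2^2/\|B\|_F^2$; the weights cancel the denominators, and since $\sum_{i=1}^q\|A(\hat X-X^\ast)BB_{i,:}^T\|_2^2=\|A(\hat X-X^\ast)BB^T\|_F^2$ we obtain
$$E\bigl[\|A(\tilde X-X^\ast)B\|_F^2\bigr] = \|A(\hat X-X^\ast)B\|_F^2-\frac{\|A(\hat X-X^\ast)BB^T\|_F^2}{\|B\|_F^2}.$$
Finally, the columns of $\bigl(A(\hat X-X^\ast)B\bigr)^T$ lie in $R(B^T)$, so applying Lemma \ref{lem3} with $B$ in the role of $A$ yields $\|A(\hat X-X^\ast)BB^T\|_F^2\ge\sigma_{\min}^2(B)\|A(\hat X-X^\ast)B\|_F^2$, which gives the claimed bound with $\rho_2=1-\sigma_{\min}^2(B)/\|B\|_F^2$.

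The main obstacle — essentially the only nontrivial point — is recognizing and justifying the orthogonality $(Y^\ast-X^\ast B)B^T=0$: it is exactly what makes the update ``noise‑free'' (contrast the perturbation bookkeeping of Lemma \ref{lem2}) and what lets the argument work for an \emph{arbitrary} starting matrix $\hat X$, without any range condition. Everything else is trace manipulation and the choice of sampling probabilities, paralleling the proof of Lemma \ref{lem31}.
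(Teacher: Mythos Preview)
Your proposal is correct and follows essentially the same route as the paper: both arguments hinge on the normal-equation identity $X^\ast BB_{i,:}^T=Y^\ast B_{i,:}^T$ (your $(Y^\ast-X^\ast B)B^T=0$), then obtain the Pythagorean identity $\|A(\tilde X-X^\ast)B\|_F^2=\|A(\hat X-X^\ast)B\|_F^2-\|A(\hat X-X^\ast)BB_{i,:}^T\|_2^2/\|B_{i,:}\|_2^2$, take the expectation with the given probabilities, and finish with Lemma~\ref{lem3}. The only cosmetic difference is that you package the orthogonality as the multiplicative factorization $A(\tilde X-X^\ast)B=A(\hat X-X^\ast)B\bigl(I-\tfrac{B_{i,:}^T B_{i,:}}{\|B_{i,:}\|_2^2}\bigr)$, whereas the paper verifies $\langle A(\tilde X-\hat X)B,\,A(\tilde X-X^\ast)B\rangle_F=0$ directly via trace manipulations; these are two presentations of the same computation.
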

\begin{proof}
By the definition of coordinate descent updates for $ B^TX^T=( Y^*)^T $, we have
\begin{equation*}
\tilde{X}=\hat{X} + \frac{ (Y^* - \hat{X} B )B_{i,:}^T }{\|B_{i,:}\|^2_2} I_{i,:}.
\end{equation*}
It yields $A\tilde{X}B=A\hat{X}B+\frac{1}{{\|B_{i,:}\|^2_2}}A(Y^* - \hat{X} B )B_{i,:}^TB_{i,:} $. Using the projection formula satisfied by coordinate descent $B_{i,:}B^T(\tilde{X})^T=B_{i,:}(Y^*)^T$ and the properties of MP generalized inverse, we have
\begin{equation*}
\tilde{X}BB_{i,:}^T =Y^*B_{i,:}^T,\  X^*BB^T = Y^*B^T,\  A^TAX^*BB^T =A^T CB^T.
\end{equation*}
Then
\begin{align*}
\langle A( \tilde{X}- \hat{X})B,  A( \tilde{X} -X^* )B \rangle_F & =\frac{ 1}{\|B_{i,:}\|^2_2}\langle A(Y^* - \hat{X} B )B_{i,:}^TB_{i,:},  A(\tilde{X}-X^*)B \rangle_F \\
& =\frac{ 1}{\|B_{i,:}\|^2_2}  {\rm trace}(B_{i,:}^T B_{i,:}(Y^* - \hat{X} B )^TA^T  A(\tilde{X}-X^*)B  ) \\
& =\frac{ 1}{\|B_{i,:}\|^2_2}  {\rm trace}( A(\tilde{X}-X^*)B B_{i,:}^T B_{i,:}(Y^* - \hat{X} B )^T A^T) \\
 & =0\ ({\rm by}\ \tilde{X}B B_{i,:}^T=X^*B B_{i,:}^T),
\end{align*}
and
\begin{align*}
\| A( \tilde{X} -\hat{X} )B\|_F^2  & =\frac{ 1}{\|B_{i,:}\|^4_2}  \|A(Y^* - \hat{X} B )B_{i,:}^TB_{i,:}\|_F^2 \\
&= \frac{1}{\|B_{i,:}\|_2^4} {\rm trace}( B_{i,:}^T B_{i,:}(Y^* - \hat{X} B )^TA^T  A (Y^* - \hat{X} B )B_{i,:}^T B_{i,:} )\\
&  = \frac{1}{\|B_{i,:}\|_2^4} {\rm trace}( A (Y^* - \hat{X} B )B_{i,:}^T B_{i,:} B_{i,:}^T B_{i,:}(Y^* - \hat{X} B )^TA^T )\\
&  = \frac{1}{\|B_{i,:}\|_2^2} {\rm trace}( A (Y^* - \hat{X} B )B_{i,:}^T  B_{i,:}(Y^* - \hat{X} B )^TA^T )\\
& = \frac{\|A (Y^* - \hat{X} B )B_{i,:}^T\|_2^2}{\|B_{i,:}\|_2^2} =  \frac{\|A (X^* - \hat{X}) B B_{i,:}^T\|_2^2}{\|B_{i,:}\|_2^2}.
\end{align*}
Therefore,
\begin{align}\label{e38}
\|A( \tilde{X} -X^* )B \|_F^2 & = \|A( \hat{X} -X^* )B \|_F^2 - \| A( \tilde{X}- \hat{X})B\|_F^2 \notag\\
 & =\|A( \hat{X} -X^* )B \|_F^2 - \frac{\|A (X^* - \hat{X}) B B_{i,:}^T\|_2^2}{\|B_{i,:}\|_2^2}.
\end{align}
By taking the  expectation on both sides of (\ref{e38}), we can obtain
\begin{align*}
E [\|A( \tilde{X} -X^* )B \|_F^2] & = E \left[  \|A( \hat{X} -X^* )B \|_F^2 -\frac{\|A (X^* - \hat{X}) B B_{i,:}^T\|_2^2}{\|B_{i,:}\|_2^2} \right]\\
 &= \|A( \hat{X} -X^* )B \|_F^2 - \sum\limits_{i=1}^q\frac{\|B_{i,:}\|^2_2}{\|B\|^2_F} \frac{\|A (X^* - \hat{X}) B B_{i,:}^T\|_2^2}{\|B_{i,:}\|_2^2}\\
& =  \|A( \hat{X} -X^* )B \|_F^2 -\frac{\|A (X^* - \hat{X}) B B^T\|_F^2}{\|B\|_F^2}\\
& =  \|A( \hat{X} -X^* )B \|_F^2 -\frac{\|B (A (X^* - \hat{X}) B) ^T\|_F^2}{\|B\|_F^2}\\
 &\le  \|A( \hat{X} -X^* )B \|_F^2 - \frac{\sigma^2_{\min}(B)}{\|B\|_F^2}\| (A (X^* - \hat{X}) B) ^T\|_F^2\\
&= \left( 1 -\frac{\sigma^2_{\min}(B)}{\|B\|_F^2}\right)\| A (X^* - \hat{X}) B \|_F^2 .
\end{align*}
The inequality is obtained by Lemma \ref{lem3} because that  all columns of $(A (X^* - \hat{X}) B) ^T$ are in the range of $B^T$.
This completes the proof.
\end{proof}

\begin{theorem}\label{t31}
Let $\{X^{(k)}\}$ denote the sequence that generated by Algorithm \ref{alg31} for the inconsistent matrix Eq. (\ref{e11}), starting from any initial matrix $X^{(0)}\in R^{p\times q}$ and $Y^{(0)}=X^{(0)}B$. In exact arithmetic, it holds
\begin{equation}\label{e341}
E[\|AX^{(k)}B-AA^+CB^+B\|_F^2]\leq (1+\eta) \rho_2^{k}
\|AX^{(0)}B-AA^+CB^+B\|_F^2+ \eta\rho_2^{k}\|AA^+C-AA^+CB^+B\|_F^2 ,
\end{equation}
where the $j$th column of $A$ is selected with probability $\hat{p}_j(A)=\frac{\|A_{:,j}\|_2^2}{\|A\|_F^2}$
and the $i$th row of $B$ is selected with probability $ p_i(B)=\frac{\|B_{i,:}\|_2^2}{\|B\|_F^2}$ .
\end{theorem}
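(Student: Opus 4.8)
The plan is to mimic the structure of the proof of Theorem \ref{t21}, but working throughout with the "forward-mapped" error $A(\cdot)B$ rather than with the error itself, since in the inconsistent case $X^{(k)}$ need not converge to $X^*$ (only $AX^{(k)}B$ does). First I would set up the intermediate quantity: let $\tilde{X}^{(k+1)}$ be the result of one exact RGS step applied to $B^TX^T=(Y^*)^T$ starting from $X^{(k)}$, where $Y^*=A^+C$, and let $X^{(k+1)}$ be the actual iterate of Algorithm \ref{alg31}, which performs the RGS step against the perturbed right-hand side $Y^{(k+1)}$ (carried in the matrix $E^{(k)}$). The key algebraic identity, obtained exactly as in the proof of Theorem \ref{t21} via Lemma \ref{lem2} applied to the rows of $X^T$, is that $X^{(k+1)}$ and $\tilde{X}^{(k+1)}$ differ by a rank-one update supported on row $i$ of size proportional to $Y^{(k+1)}_{:,i}-Y^*_{:,i}$, hence after multiplying by $A$ on the left and $B$ on the right I get an orthogonality relation $\langle A(X^{(k+1)}-\tilde{X}^{(k+1)})B,\,A(\tilde{X}^{(k+1)}-X^*)B\rangle_F=0$, using $\tilde{X}^{(k+1)}BB_{i,:}^T=X^*BB_{i,:}^T$. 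This yields the Pythagorean split
\begin{align*}
\|A(X^{(k+1)}-X^*)B\|_F^2 = \|A(\tilde{X}^{(k+1)}-X^*)B\|_F^2 + \|A(X^{(k+1)}-\tilde{X}^{(k+1)})B\|_F^2.
\end{align*}

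Next I would take conditional expectation. The first term is controlled by Lemma \ref{lem32}, giving a contraction factor $\rho_2$ times $\|A(X^{(k)}-X^*)B\|_F^2$. For the second term, the perturbation is the difference between the carried right-hand side $Y^{(k+1)}$ and $Y^*$; summing the rank-one contributions over the random row index $i$ of $B$ with probabilities $p_i(B)=\|B_{i,:}\|_2^2/\|B\|_F^2$, and noting $\sum_i B_{i,:}^TB_{i,:}=B^TB$, I expect the second term's expectation to collapse to something of the form $\frac{1}{\|B\|_F^2}\|A(Y^{(k+1)}-Y^*)B^{?}\|_F^2$ — and here I need to be careful: in the inconsistent case the relevant bound on $Y^{(k+1)}-Y^*$ is not on the error itself (which need not go to zero) but on $A(Y^{(k+1)}-Y^*)$, which is exactly what Lemma \ref{lem31} provides: $E[\|AY^{(k+1)}-AA^+C\|_F^2]\le \rho_1^{k+1}\|AY^{(0)}-AA^+C\|_F^2$. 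So I will need to route the column-norm weighting of $B$ through so that what survives is $\|A(Y^{(k+1)}-Y^*)\|_F^2$ (up to the $B$ factor from $\hat{H}$, cf. the $Y_{:,j}$ in Algorithm \ref{alg21}); tracking precisely which $B$-factors appear on which side is the spot where a sign/placement error is easiest to make. Then I would assemble the recursion $E[\|AX^{(k+1)}B - AX^*B\|_F^2] \le \rho_2 E[\|AX^{(k)}B-AX^*B\|_F^2] + \frac{1}{\|B\|_F^2}\rho_1^{k+1}(\text{const})$ and unroll it exactly as in (\ref{eq12})–(\ref{eq11}), producing the geometric sum $\sum_{j}\rho_1^{j+1}\rho_2^{k-j}$ that gets repackaged as $(1+\eta)\rho_2^k$ with the three-case $\eta$ of Theorem \ref{t21}.

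The last step is to convert the $Y^{(0)}$ data into the form stated in \eqref{e341}. Using $Y^{(0)}=X^{(0)}B$ and the identity $AA^+CB^+B\cdot$(something) — more precisely, writing $AY^{(0)}-AA^+C = A X^{(0)}B - AA^+C$ and inserting $\pm AA^+CB^+B$, I get $\|AY^{(0)}-AA^+C\|_F \le \|AX^{(0)}B - AA^+CB^+B\|_F + \|AA^+CB^+B - AA^+C\|_F$, which after squaring (and absorbing cross terms, perhaps via a $2ab\le a^2+b^2$ type bound, or more cleanly by keeping both $\|AX^{(0)}B-AA^+CB^+B\|_F^2$ and $\|AA^+C-AA^+CB^+B\|_F^2$ as separate summands weighted by $\eta$) gives the stated right-hand side with coefficients $(1+\eta)\rho_2^k$ and $\eta\rho_2^k$ respectively. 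The main obstacle I anticipate is bookkeeping: keeping the roles of the two residual matrices $R^{(k)}$ (for $AY=C$) and $E^{(k)}$ (for $B^TX^T=Y^T$) straight, and correctly propagating the fact that the perturbation fed into the $X$-update at step $k$ is the \emph{current} approximate $Y^{(k+1)}$ rather than $Y^*$ — i.e., verifying that the $E^{(k)}$ recursion in Algorithm \ref{alg31} really does implement "RGS on $B^TX^T=(Y^{(k+1)})^T$", so that Lemma \ref{lem32} is applicable to the $\tilde{X}^{(k+1)}$ piece and Lemma \ref{lem31} supplies the bound on the $Y^{(k+1)}-Y^*$ piece.
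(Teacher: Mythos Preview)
Your overall plan matches the paper's proof almost step for step: introduce the auxiliary $\tilde X^{(k+1)}$ obtained by one exact RGS step for $B^TX^T=(Y^*)^T$, establish the orthogonality $\langle A(X^{(k+1)}-\tilde X^{(k+1)})B,\,A(\tilde X^{(k+1)}-X^*)B\rangle_F=0$ via $\tilde X^{(k+1)}BB_{i,:}^T=X^*BB_{i,:}^T$, bound the ``clean'' piece by Lemma~\ref{lem32} and the perturbation piece by Lemma~\ref{lem31}, and unroll. Two points are worth flagging.

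\emph{Handling the perturbation term.} The paper does not take expectation over $i$ to collapse the perturbation into $\frac{1}{\|B\|_F^2}\|A(Y^{(k+1)}-Y^*)B^T\|_F^2$ as you propose; instead it bounds pointwise, before expectation, by $\frac{\|A(Y^{(k+1)}-Y^*)B_{i,:}^T\|_2^2}{\|B_{i,:}\|_2^2}\le \|A(Y^{(k+1)}-Y^*)\|_F^2$. Your route also works (since $\|MB^T\|_F^2\le\sigma_{\max}^2(B)\|M\|_F^2\le\|B\|_F^2\|M\|_F^2$), but the recursion you wrote with an explicit $\tfrac{1}{\|B\|_F^2}$ factor does not directly yield the constants in \eqref{e341}; you would need to discard that factor by a further inequality.

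\emph{The final decomposition is an equality, not a triangle-inequality bound.} This is the one genuine gap. To obtain exactly the coefficients $(1+\eta)\rho_2^k$ and $\eta\rho_2^k$ you cannot use $\|a+b\|\le\|a\|+\|b\|$ and then $2ab\le a^2+b^2$; that would produce $(1+2\eta)$ and $2\eta$. What the paper shows (and what you need) is the orthogonality
\[
\langle A(X^{(0)}-A^+CB^+)B,\; AA^+C-AA^+CB^+B\rangle_F=0,
\]
which follows from $A^TAA^+=A^T$ and $B^+BB^T=B^T$ after a trace cyclic-permutation. With this, $\|AY^{(0)}-AA^+C\|_F^2=\|A(X^{(0)}-X^*)B\|_F^2+\|AA^+C-AA^+CB^+B\|_F^2$ exactly, and the stated constants drop out of the geometric sum. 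Your ``more cleanly by keeping both as separate summands'' is the right instinct, but you need this orthogonality to justify it.
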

\begin{proof}
Let $X^{(k)} $ denote the $k$th iterate of RGS method (\ref{e33}) solving $B^TX^T=(Y^{(k+1)})^T$, and $\tilde{X}^{(k+1)}$ be one-step RGS iterate solving $B^TX^T=(Y^*)^T$ from $X^{(k)}$, then
\begin{equation*}
X^{(k+1)}= X^{(k)} + \frac{ (Y^{(k+1)} - X^{(k)} B )B_{i,:}^T }{\|B_{i,:}\|^2_2} I_{i,:},\ \
\tilde{X}^{(k+1)}= X^{(k)} + \frac{ (Y^* - X^{(k)} B )B_{i,:}^T }{\|B_{i,:}\|^2_2} I_{i,:}.
\end{equation*}
Then
\begin{align*}
& \langle A( X^{(k+1)}- \tilde{X}^{(k+1)})B,  A( \tilde{X}^{(k+1)} -X^* )B \rangle_F\\
& =\frac{ 1}{\|B_{i,:}\|^2_2}\langle A(Y^{(k+1)}-Y^* )B_{i,:}^TB_{i,:},  A(\tilde{X}^{(k+1)}-X^*)B \rangle_F \\
& =\frac{ 1}{\|B_{i,:}\|^2_2}  {\rm trace}(B_{i,:}^T B_{i,:}(Y^{(k+1)}-Y^* )^TA^T  A(\tilde{X}^{(k+1)}-X^*)B  ) \\
& =\frac{ 1}{\|B_{i,:}\|^2_2}  {\rm trace}( A(\tilde{X}^{(k+1)}-X^*)B B_{i,:}^T B_{i,:}(Y^{(k+1)}-Y^*)^T A^T) \\
 & =0 \ ({\rm by}\ \tilde{X}^{(k+1)}B B_{i,:}^T = Y^*B_{i,:}^T,\ X^*B B_{i,:}^T= Y^*B_{i,:}^T ),
\end{align*}
and
\begin{align*}
\| A( X^{(k+1)} -\tilde{X}^{(k+1)} )B\|_F^2  & = \left\|\frac{ A(Y^{(k+1)}-Y^*)B_{i,:}^TB_{i,:}}{\|B_{i,:}\|^2_2} \right \|_F^2 \\
&= \frac{1}{\|B_{i,:}\|_2^4} {\rm trace}( B_{i,:}^T B_{i,:}(Y^{(k+1)}-Y^* )^TA^T  A (Y^{(k+1)}-Y^*  )B_{i,:}^T B_{i,:} )\\
&  = \frac{1}{\|B_{i,:}\|_2^4} {\rm trace}( A (Y^{(k+1)}-Y^*  )B_{i,:}^T B_{i,:} B_{i,:}^T B_{i,:}(Y^{(k+1)}-Y^* )^TA^T )\\
& = \frac{\| A (Y^{(k+1)}-Y^*   )B_{i,:}^T\|_2^2}{\|B_{i,:}\|_2^2}  \le \| A (Y^{(k+1)}-Y^*   )\|_F^2.
\end{align*}
Therefore,
\begin{align}\label{e310}
\|A( X^{(k+1)} -X^* )B \|_F^2  &= \|A( X^{(k+1)} -\tilde{X}^{(k+1)} )B \|_F^2 + \| A( \tilde{X}^{(k+1)} -X^*)B\|_F^2 \notag\\
&\le  \| A (Y^{(k+1)}-Y^*   )\|_F^2 +\| A( \tilde{X}^{(k+1)} -X^*)B\|_F^2.
\end{align}
By taking the conditional expectation on both sides of (\ref{e310}), we can obtain
\begin{align*}
E_k \left[\|A( X^{(k+1)} -X^* )B \|_F^2\right]  & \le E_k \left[\|A( \tilde{X}^{k+1} -X^* )B \|_F^2 \right]+ E_k\left[\| A (Y^{(k+1)}-Y^*)\|_F^2 \right] \\
& \le \rho_2 \|A(  X^{(k)} -X^* )B \|_F^2 + \rho_1\| A (Y^{(k)}-Y^* )\|_F^2
\end{align*}
The last inequality is obtained by Lemma \ref{lem31} and Lemma \ref{lem32}.
Applying this recursive relation iteratively, we have
\begin{align}\label{e311}
E \left[\|A( X^{(k+1)} -X^* )B \|_F^2\right]
& \le \rho_2 E \left[\|A( X^{(k)} -X^* )B\|_F^2\right] + \rho_1 E \left[ \|AY^{(k)}-AY^*\|_F^2\right] \notag \\
& \le \rho_2^2 E \left[\|A( X^{(k-1)} -X^* )B\|_F^2 \right] +\rho_1 (\rho_1^{k-1} + \rho_1^k) \|AY^{(0)}-AY^*\|_F^2 \notag \\
& \le \cdots \notag\\
 &\le \rho_2^{k+1} \|A( X^{(0)} -X^* )B\|_F^2 +  \sum\limits_{i=0}^{k}\rho_1^{i+1}\rho_2^{k-i} \|AY^{(0)}-AY^*\|_F^2.
\end{align}
Since $AY^*=AA^+C B^+B + AA^+C(I-B^+B )$ and $ Y^{(0)} = X^{(0)}B$, then
\begin{align*}
\|AY^{(0)}-AY^*\|_F^2  & =  \|AX^{(0)}B-AA^+C B^+B - AA^+C(I-B^+B )\|_F^2\\
& = \|A( X^{(0)} - A^+C B^+) B\|_F^2 +\| AA^+C - AA^+CB^+B \|_F^2  \\
 & \ \ \ \ + 2 \langle A( X^{(0)} - A^+C B^+) B, AA^+C - AA^+CB^+B \rangle_F.
\end{align*}
It follows from
\begin{align*}
\langle A( X^{(0)} - A^+C B^+) B, AA^+C - AA^+CB^+B \rangle_F & = {\rm trace}(B^T( X^{(0)} - A^+C B^+)^TA^T (AA^+C - AA^+CB^+B))\\
&= {\rm trace}(A^T (AA^+C - AA^+CB^+B )B^T( X^{(0)} - A^+C B^+)^T)\\
&= {\rm trace}(A^T AA^+C B^T- A^TAA^+CB^+B B^T)( X^{(0)} - A^+C B^+)^T)\\
&= {\rm trace}(A^T  C B^T- A^T C  B^T)( X^{(0)} - A^+C B^+)^T)  = 0
\end{align*}
that
\begin{align*}
\|AY^{(0)}-AY^*\|_F^2  = \|A( X^{(0)} - A^+C B^+) B\|_F^2 +\| AA^+C - AA^+CB^+B \|_F^2.
\end{align*}
Substituting this equality into (\ref{e311}), we have
\begin{align*}
E \left[\|A( X^{(k+1)} -X^* )B \|_F^2\right]
 & \le\left(\rho_2^{k+1} + \sum\limits_{i=0}^{k}\rho_1^{i+1}\rho_2^{k-i} \right)\|A( X^{(0)} -X^* )B\|_F^2  \\
& \ \ \ \ +  \sum\limits_{i=0}^{k}\rho_1^{i+1}\rho_2^{k-i} \| AA^+C - AA^+CB^+B \|_F^2\\
& \le (1+\eta) \rho_2^{k+1}
\|AX^{(0)}B-AA^+CB^+B\|_F^2+ \eta\rho_2^{k+1}\|AA^+C-AA^+CB^+B\|_F^2 ,
\end{align*}
 where $\eta$ is defined in  Theorem \ref{t21}. This completes the proof.
\end{proof}

\begin{remark}
If $A$ has full column rank and $B$ has full row rank, Theorem \ref{t31} implies that $X^{(k)}$ converges linearly in expectation to $A^+CB^+$. If $A$ does not have full
column rank or $B$ does not have full row rank, Algorithm \ref{alg31} fails to converge (see section 3.3 of the work of Ma et al \cite{N10}.
\end{remark}
\begin{remark}
If the matrix Eq. (\ref{e11}) is consistent, then $ AA^+C = AA^+CB^+B=C $. Therefore, (\ref{e341}) becomes
\begin{equation*}
E [\|AX^{(k)}B -C\|_F^2 ]
\le   (1+ \eta)\rho_2^{k} \|AX^{(0)}B-C\|_F^2.
\end{equation*}
That is, $AX^{(k)}B$ converges to $C$ in expectation (but $X^{(k)}$ does not necessarily converge).
\end{remark}

\begin{remark}
In Algorithm 3.1, $X^{(k)}$ and $Y^{(k)}$ can be iteratively solved at the same time, or the approximate value of $Y^*=A^+C$ can be iteratively obtained first and then the approximate value of $X^*=A^+CB^+$ can be iteratively solved. By using Lemma \ref{lem31} and Lemma \ref{lem32}, we can obtain the similar convergence results. We omit the proof for the sake of conciseness.
\end{remark}

\section{Extended Kacamzrz Method and Extended GS Method for $AXB=C$}
When the matrix Eq. (\ref{e11}) is inconsistent and matrix $A$ or matrix $B$ is not full of rank, using the ideas of \cite{ZF13,Du19,MN15}, we can consider the REK method or REGS method to solve the matrix Eq. (\ref{e11}).

\subsection{$AXB=C$ Inconsistent, $A$ Not Full Rank And $B$ Full Column Rank ($q\geq n$)}
The matrix equation $AY=C$ is solved by the REK method \cite{ZF13, Du19}, while the matrix equation $XB=Y$ is solved by the RK method \cite{SV09}, because $XB=Y$ always has a solution ($B^T$ is full row rank). For this case, we use the  REK-RK method to solve $AXB=C$, which is called the IME-REKRK algorithm.

\begin{algorithm}
  \leftline{\caption{REK-RK Method for Inconsistent Matrix Equation $AXB=C$ (IME-REKRK)\label{alg41}}}
  \begin{algorithmic}[1]
    \Require
      $A\in R^{m\times p}$, $B\in R^{q\times n}$, $C\in R^{m\times n}$, $X^{(0)}=0\in R^{p\times q}$, $Y^{(0)}=0\in R^{p\times n}$, $Z^{(0)}= C$, $K\in R$
    \State For $i=1:m$, $M(i)=\|A_{i,:}\|_2^2$
    \State For $j=1:p$, $N(j)=\|A_{:,j}\|_2^2$
    \State For $l=1:n$, $T(l)=\|B_{:,l}\|_2^2$
    \For {$k=0, 1,\cdots, K-1$}
    \State Pick $i$ with probability $p_{i}(A)=\frac{\|A_{i,:}\|_2^2}{\|A\|^2_F}$ , $j$ with probability $\hat{p}_{j}(A)=\frac{\|A_{:,j}\|_2^2}{\|A\|^2_F}$ and $l$ with probability $\hat{p}_{l}(B)=\frac{\|B_{:,l}\|_2^2}{\|B\|^2_F}$
    \State Compute $Z^{(k+1)}=Z^{(k)}-\frac{A_{:,j}}{N(j)}A_{:,j}^TZ^{(k)}$
    \State Compute $Y^{(k+1)}=Y^{(k)}+\frac{A_{i,:}^T}{M(i)}(C_{i,:}-Z_{i,:}^{(k+1)}-A_{i,:}Y^{(k)})$
    \State Compute $X^{(k+1)}=X^{(k)}+\frac{Y_{:,l}^{(k+1)}-X^{(k)}B_{:,l}}{T(l)}B_{:,l}^T$
    \EndFor
    \State Output $X^{(K)}$
  \end{algorithmic}
\end{algorithm}

\begin{lemma}\label{lem41}
Let $ A \in R^{m\times p}$ and $Y\in R^{p\times n}$, it holds
\begin{align*}
 \sum_{i=1}^m  \frac{\|A_{i,:}\|^2_2}{ \|A\|^2_F}\left\|\left(I-\frac{A^T_{i,:}A_{i,:}}{\|A_{i,:}\|^2_2}\right)Y\right\|_F^2
 =\|Y\|_F^2 -  \frac{\|AY\|_F^2}{\|A\|^2_F}.
\end{align*}
\end{lemma}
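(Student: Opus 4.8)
The plan is to compute the left-hand side directly by expanding the Frobenius norm via the trace, and then to sum over $i$ using the probability weights. First I would write, for each fixed $i$, the expansion
\[
\left\|\left(I-\frac{A^T_{i,:}A_{i,:}}{\|A_{i,:}\|^2_2}\right)Y\right\|_F^2
=\|Y\|_F^2-2\,\frac{\langle A^T_{i,:}A_{i,:}Y,\ Y\rangle_F}{\|A_{i,:}\|^2_2}
+\frac{\|A^T_{i,:}A_{i,:}Y\|_F^2}{\|A_{i,:}\|^4_2},
\]
using that the projection-type matrix $I-A^T_{i,:}A_{i,:}/\|A_{i,:}\|_2^2$ is symmetric. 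The key observation is that both correction terms collapse to the same quantity: since $A_{i,:}A^T_{i,:}=\|A_{i,:}\|_2^2$ is a scalar, $\|A^T_{i,:}A_{i,:}Y\|_F^2={\rm trace}(Y^TA^T_{i,:}A_{i,:}A^T_{i,:}A_{i,:}Y)=\|A_{i,:}\|_2^2\,{\rm trace}(Y^TA^T_{i,:}A_{i,:}Y)=\|A_{i,:}\|_2^2\,\|A_{i,:}Y\|_2^2$, and likewise $\langle A^T_{i,:}A_{i,:}Y,Y\rangle_F={\rm trace}(Y^TA^T_{i,:}A_{i,:}Y)=\|A_{i,:}Y\|_2^2$. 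Hence the $i$th term simplifies to $\|Y\|_F^2-\|A_{i,:}Y\|_2^2/\|A_{i,:}\|_2^2$.

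Next I would multiply by the weight $\|A_{i,:}\|_2^2/\|A\|_F^2$ and sum over $i$ from $1$ to $m$. The weights sum to $1$ (since $\sum_i\|A_{i,:}\|_2^2=\|A\|_F^2$), so the $\|Y\|_F^2$ terms combine to give exactly $\|Y\|_F^2$. In the remaining sum the factor $\|A_{i,:}\|_2^2$ cancels the denominator, leaving $\frac{1}{\|A\|_F^2}\sum_{i=1}^m\|A_{i,:}Y\|_2^2$. Finally I would note that $\sum_{i=1}^m\|A_{i,:}Y\|_2^2=\|AY\|_F^2$, because the $i$th row of $AY$ is $A_{i,:}Y$ and the Frobenius norm squared is the sum of the squared row norms. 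Putting these together yields the claimed identity $\|Y\|_F^2-\|AY\|_F^2/\|A\|_F^2$.

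There is no real obstacle here; the proof is a short direct computation. The only point requiring a moment's care is the scalar identity $A_{i,:}A^T_{i,:}=\|A_{i,:}\|_2^2$ that makes the two cross/quadratic correction terms coincide (so that the $-2$ and $+1$ coefficients produce a net $-1$), together with keeping track that $\frac{A^T_{i,:}A_{i,:}}{\|A_{i,:}\|_2^2}$ is an orthogonal projector, which is what guarantees the cancellation is clean. I would present the single-$i$ simplification first as a displayed chain of equalities, then the summation step as a second short display, and conclude.
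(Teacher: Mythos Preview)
Your proof is correct and follows essentially the same route as the paper's: both exploit that $I-\frac{A_{i,:}^TA_{i,:}}{\|A_{i,:}\|_2^2}$ is an idempotent symmetric projector to simplify each summand, then sum the weights using $\sum_i\|A_{i,:}\|_2^2=\|A\|_F^2$ and $\sum_i A_{i,:}^TA_{i,:}=A^TA$. The only cosmetic difference is that the paper expands $\|\,\cdot\,\|_F^2$ column by column and swaps the order of summation to form $I-\frac{A^TA}{\|A\|_F^2}$ before pairing with $Y$, whereas you work with traces and simplify each $i$-term to $\|Y\|_F^2-\|A_{i,:}Y\|_2^2/\|A_{i,:}\|_2^2$ first; the computations are equivalent.
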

\begin{proof}
By the fact of $\|A\|_F^2=\sum_{i=1}^m\|A_{i,:}\|_2^2=\sum_{j=1}^n\|A_{:,j}\|_2^2$ and $\left(I-\frac{A^T_{i,:}A_{i,:}}{\|A_{i,:}\|^2_2}\right)^2=I-\frac{A^T_{i,:}A_{i,:}}{\|A_{i,:}\|^2_2}$, we have
\begin{align*}
 \sum_{i=1}^m  \frac{\|A_{i,:}\|^2_2}{ \|A\|^2_F}\left\|\left(I-\frac{A^T_{i,:}A_{i,:}}{\|A_{i,:}\|^2_2}\right)Y\right\|_F^2
& =\sum_{i=1}^m  \frac{\|A_{i,:}\|^2_2}{ \|A\|^2_F}\sum_{j=1}^n \left\|\left(I-\frac{A^T_{i,:}A_{i,:}}{\|A_{i,:}\|^2_2}\right)Y_{:,j}\right\|_2^2 \\
& =\sum_{i=1}^m\frac{\|A_{i,:}\|^2_2}{ \|A\|^2_F} \sum_{j=1}^n Y_{:,j}^T  \left(I-\frac{A^T_{i,:}A_{i,:}}{\|A_{i,:}\|^2_2}\right)^2 Y_{:,j} \\
& =\sum_{j=1}^n Y_{:,j}^T \left[ \sum_{i=1}^m\frac{\|A_{i,:}\|^2_2}{ \|A\|^2_F} \left(I-\frac{A^T_{i,:}A_{i,:}}{\|A_{i,:}\|^2_2}\right)\right] Y_{:,j} \\
& =\sum_{j=1}^n Y_{:,j}^T  \left(I-\frac{A^T A }{\|A \|^2_F}\right) Y_{:,j} \\
 & = \|Y\|_F^2 -  \frac{\|AY\|_F^2}{\|A\|^2_F}.
\end{align*}
This completes the proof.
\end{proof}

Similar to the proof of Lemma \ref{th1}, we can prove the following Lemma \ref{lem43}.
\begin{lemma}\label{lem43}
Let $Z^*=(I- AA^+)C$. Let $\{Z^{(k)}\}$ denote the $k$th iterate of RK applied to $A^TZ=0$ with the initial guess $Z^{(0)}\in R^{m\times n}$. If $Z^{(0)}_{:,j}\in C_{:,j}+R(A),\ j=1,\ldots, n$,  then $Z^{(k)}$ converges linearly to $(I- AA^+)C$ in mean square form. Moreover, the solution error in expectation for the iteration sequence $Z^{(k)}$ obeys
\begin{equation}\label{e42}
E[\|Z^{(k)}-(I-AA^+)C\|_F^2]\leq \rho_1^{k}\|Z^{(0)}-(I-AA^+)C\|_F^2,
\end{equation}
where the $j$th column of $A$ is selected with probability $\hat{p}_j(A)=\frac{\|A_{:,j}\|_2^2}{\|A\|^2_F}$.
\end{lemma}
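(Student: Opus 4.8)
The plan is to follow the same template used in the proof of Lemma \ref{th1}, since the update in line 6 of Algorithm \ref{alg41},
$$
Z^{(k+1)}=Z^{(k)}-\frac{A_{:,j}}{\|A_{:,j}\|_2^2}A_{:,j}^TZ^{(k)}
=\left(I-\frac{A_{:,j}A_{:,j}^T}{\|A_{:,j}\|_2^2}\right)Z^{(k)},
$$
is precisely one step of the randomized Kaczmarz method applied to the (consistent, homogeneous) system $A^TZ=0$, where the $j$th row $A_{:,j}^T$ of $A^T$ is selected with probability $\|A_{:,j}\|_2^2/\|A\|_F^2$. First I would record the orthogonal-decomposition identity: writing $Z^*=(I-AA^+)C$, which satisfies $A^TZ^*=0$, and using that $Z^{(k+1)}-Z^{(k)}$ is a scalar multiple of the column $A_{:,j}$ while $A_{:,j}^T(Z^{(k+1)}-Z^*)=A_{:,j}^TZ^{(k+1)}=0$ (the defining projection property), the cross term vanishes and one gets
$$
\|Z^{(k+1)}-Z^*\|_F^2=\|Z^{(k)}-Z^*\|_F^2-\frac{\|A_{:,j}^TZ^{(k)}\|_2^2}{\|A_{:,j}\|_2^2}.
$$
This is the exact analogue of the per-step identity in Lemma \ref{th1}, just with rows of $A$ replaced by columns.

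Next I would take the conditional expectation over the random column index $j$. Summing $\sum_{j=1}^p \frac{\|A_{:,j}\|_2^2}{\|A\|_F^2}\cdot\frac{\|A_{:,j}^TZ^{(k)}\|_2^2}{\|A_{:,j}\|_2^2}=\frac{\|A^TZ^{(k)}\|_F^2}{\|A\|_F^2}$, and noting $A^TZ^*=0$ so that $A^TZ^{(k)}=A^T(Z^{(k)}-Z^*)$, gives
$$
E_k\!\left[\|Z^{(k+1)}-Z^*\|_F^2\right]
=\|Z^{(k)}-Z^*\|_F^2-\frac{\|A^T(Z^{(k)}-Z^*)\|_F^2}{\|A\|_F^2}.
$$
To close the recursion I need $\|A^T(Z^{(k)}-Z^*)\|_F^2\ge \sigma_{\min}^2(A)\,\|Z^{(k)}-Z^*\|_F^2$, which is Lemma \ref{lem3} applied to $A^T$: this requires that every column of $Z^{(k)}-Z^*$ lie in $R(A)=R((A^T)^T)$. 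Here is where the hypothesis $Z^{(0)}_{:,j}\in C_{:,j}+R(A)$ enters: it says $(Z^{(0)}-Z^*)_{:,j}=Z^{(0)}_{:,j}-(I-AA^+)C_{:,j}\in R(A)$ for each $j$ (since $C_{:,j}-(I-AA^+)C_{:,j}=AA^+C_{:,j}\in R(A)$), and each Kaczmarz step adds to $Z^{(k)}_{:,j}$ only a multiple of the column $A_{:,j}\in R(A)$, so the property $(Z^{(k)}-Z^*)_{:,j}\in R(A)$ is preserved by induction on $k$. Applying Lemma \ref{lem3} then yields
$$
E_k\!\left[\|Z^{(k+1)}-Z^*\|_F^2\right]\le\Bigl(1-\tfrac{\sigma_{\min}^2(A)}{\|A\|_F^2}\Bigr)\|Z^{(k)}-Z^*\|_F^2=\rho_1\|Z^{(k)}-Z^*\|_F^2,
$$
and taking full expectation and inducting on $k$ gives \eqref{e42}.

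The main obstacle is not any single computation — each of these steps mirrors Lemma \ref{th1} almost verbatim — but rather the bookkeeping around the affine invariant subspace: one must be careful that the relevant quantity is $Z^{(k)}-Z^*$ (not $Z^{(k)}$ itself), verify that $Z^*=(I-AA^+)C$ is indeed a solution of $A^TZ=0$ and is the particular solution the iteration converges to under the stated initialization, and check that the column-space membership $(Z^{(k)}-Z^*)_{:,j}\in R(A)$ — needed to invoke Lemma \ref{lem3} for $A^T$ — is correctly propagated from the initial condition $Z^{(0)}_{:,j}\in C_{:,j}+R(A)$. Once that invariant is in hand, the convergence estimate follows immediately, so I would state the subspace-invariance claim explicitly before running the main recursion.
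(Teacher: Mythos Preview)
Your proposal is correct and follows exactly the approach the paper intends: the paper does not give a standalone proof of this lemma but simply writes ``Similar to the proof of Lemma \ref{th1}, we can prove the following Lemma \ref{lem43},'' and your write-up is precisely that analogue, with the rows of $A$ replaced by columns (i.e., RK applied to $A^TZ=0$) and the subspace invariance $(Z^{(k)}-Z^*)_{:,j}\in R(A)$ carefully justified from the initialization hypothesis so that Lemma \ref{lem3} applies to $A^T$.
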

\begin{lemma}\label{lem44}
  The sequence $\{Y^{(k)}\}$ is generated by the REK method for $AY=C$  starting from the initial matrix $Y^{(0)}\in R^{p\times n}$ in which $Y^{(0)}_{:,j}\in R(A^T),\ j=1,\ldots, n$ and the initial guess  $Z^{(0)}\in R^{m\times n}$ in which $Z^{(0)}_{:,j}\in C_{:,j}+R(A),\ j=1,\ldots,  n$. In exact arithmetic, it holds
\begin{equation}\label{e43}
E [ \| Y^{(k)} - A^+C\|_F^2 ]   \le \frac{k\rho_1^{k}}{ \|A\|^2_F}\left\|Z ^{(0)}-(I-A A^+)C \right\|_F^2 + \rho_1^{k} \left\|Y^{(0)}-A^+C \right\|_F^2,
\end{equation}
where the $i$th row of $A$ is selected with probability $p_i(A)=\frac{\|A_{i,:}\|_2^2}{ \|A \|^2_F}$,
the $j$th column of $A$ is selected with probability $\hat{p}_j(A)=\frac{\|A_{:,j}\|_2^2}{\|A \|^2_F}$.
\end{lemma}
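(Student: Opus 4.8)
The plan is to treat the $Y$-recursion of Algorithm \ref{alg41} as the randomized Kaczmarz iteration of Lemma \ref{th1} applied to $AY=C$, but with a right-hand side perturbed by the current extended iterate $Z^{(k+1)}$, and to control that perturbation via the geometric decay of $\|Z^{(k)}-(I-AA^+)C\|_F$ furnished by Lemma \ref{lem43}. Throughout write $Y^*=A^+C$ and $Z^*=(I-AA^+)C$, so that $C=AY^*+Z^*$ and, in particular, $C_{i,:}-Z^*_{i,:}=A_{i,:}Y^*$ for every row $i$. First I would record the invariant $(Y^{(k)}-Y^*)_{:,j}\in R(A^T)$: the $Y$-update in Algorithm \ref{alg41} adds to each column only a multiple of $A_{i,:}^T=A^TI_{:,i}\in R(A^T)$, so this follows by induction from the hypothesis $Y^{(0)}_{:,j}\in R(A^T)$; likewise the hypothesis $Z^{(0)}_{:,j}\in C_{:,j}+R(A)$ is exactly what Lemma \ref{lem43} requires.

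The core is a one-step estimate. Writing $C_{i,:}-Z^{(k+1)}_{i,:}-A_{i,:}Y^{(k)}=A_{i,:}(Y^*-Y^{(k)})+(Z^*_{i,:}-Z^{(k+1)}_{i,:})$, the $Y$-update becomes
\[
Y^{(k+1)}-Y^*=\Big(I-\frac{A_{i,:}^TA_{i,:}}{\|A_{i,:}\|_2^2}\Big)(Y^{(k)}-Y^*)+\frac{A_{i,:}^T}{\|A_{i,:}\|_2^2}\big(Z^*_{i,:}-Z^{(k+1)}_{i,:}\big).
\]
Since $\frac{A_{i,:}^TA_{i,:}}{\|A_{i,:}\|_2^2}A_{i,:}^T=A_{i,:}^T$, the two summands are orthogonal in $\langle\cdot,\cdot\rangle_F$, so Pythagoras gives $\|Y^{(k+1)}-Y^*\|_F^2=\big\|(I-\frac{A_{i,:}^TA_{i,:}}{\|A_{i,:}\|_2^2})(Y^{(k)}-Y^*)\big\|_F^2+\|Z^{(k+1)}_{i,:}-Z^*_{i,:}\|_2^2/\|A_{i,:}\|_2^2$. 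I would then take the conditional expectation over the row index $i=i_k$ only — legitimate because $Z^{(k+1)}$ depends on $j_k$ but not on $i_k$ — apply Lemma \ref{lem41} to the first term, bound $\|A(Y^{(k)}-Y^*)\|_F^2\ge\sigma^2_{\min}(A)\|Y^{(k)}-Y^*\|_F^2$ by Lemma \ref{lem3} (this is where the invariant is needed), and collapse the second term via the cancellation of $\|A_{i,:}\|_2^2$ against $p_i(A)$. This yields
\[
E_k^i\big[\|Y^{(k+1)}-Y^*\|_F^2\big]\le\rho_1\|Y^{(k)}-Y^*\|_F^2+\frac{\|Z^{(k+1)}-Z^*\|_F^2}{\|A\|_F^2}.
\]

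Finally I would pass to full expectations, insert $E[\|Z^{(k+1)}-Z^*\|_F^2]\le\rho_1^{k+1}\|Z^{(0)}-Z^*\|_F^2$ from Lemma \ref{lem43}, and unroll the scalar recursion $a_{k+1}\le\rho_1a_k+\rho_1^{k+1}c$ with $a_k=E[\|Y^{(k)}-Y^*\|_F^2]$ and $c=\|Z^{(0)}-Z^*\|_F^2/\|A\|_F^2$. The cross terms telescope geometrically, $\sum_{t=0}^{k-1}\rho_1^{k-1-t}\rho_1^{t+1}=k\rho_1^k$, giving exactly $a_k\le\rho_1^k a_0+k\rho_1^k c$, which is (\ref{e43}). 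The main obstacle is the middle step, and it is a matter of care rather than depth: one must split the residual into its "true" part $A_{i,:}(Y^*-Y^{(k)})$ and its "perturbation" part $Z^*_{i,:}-Z^{(k+1)}_{i,:}$, notice the orthogonality that makes Pythagoras applicable, and keep the expectation over $i_k$ disentangled from the randomness in $Z^{(k+1)}$ so that the two error sources decouple and Lemma \ref{lem43} can be substituted cleanly; everything after that is the same geometric-series bookkeeping as in Lemma \ref{th1}, now carrying the extra $k\rho_1^k/\|A\|_F^2$ term.
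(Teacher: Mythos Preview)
Your proposal is correct and follows essentially the same approach as the paper: the paper introduces an auxiliary iterate $\tilde{Y}^{(k+1)}$ (the one-step RK update for the clean equation $AY=AA^+C$) and decomposes $Y^{(k+1)}-A^+C$ into $(\tilde{Y}^{(k+1)}-A^+C)+(Y^{(k+1)}-\tilde{Y}^{(k+1)})$, which is exactly your orthogonal splitting $\big(I-\tfrac{A_{i,:}^TA_{i,:}}{\|A_{i,:}\|_2^2}\big)(Y^{(k)}-Y^*)+\tfrac{A_{i,:}^T}{\|A_{i,:}\|_2^2}(Z^*_{i,:}-Z^{(k+1)}_{i,:})$. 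The orthogonality, the use of Lemmas \ref{lem41}, \ref{lem3}, and \ref{lem43}, the disentangling of the expectation in $i_k$ from $Z^{(k+1)}$, and the final unrolling of $a_{k+1}\le\rho_1 a_k+\rho_1^{k+1}c$ all match the paper's argument.
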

\begin{proof}
Let ${Y^{(k)}}$  denote the $k$th iterate of REK method for $AY=C$, and $\tilde{Y}^{(k+1)}  $  be   the one-step Kaczmarz update for the matrix equation $AY=AA^+C$ from $Y^{(k)}$, i.e.,
\begin{equation*}
\tilde{Y}^{(k+1)} = Y^{(k)} + \frac{A^T_{i,:}}{\|A_{i,:}\|^2_2} (A_{i,:}A^+C-A_{i,:}Y^{(k)}).
\end{equation*}
 We have
\begin{align*}
\tilde{Y}^{(k+1)} -A^+C  & =Y^{(k)}-A^+C  + \frac{A^T_{i,:}A_{i,:}}{\|A_{i,:}\|^2_2} (A^+C- Y^{(k)})  =  \left(I-\frac{A^T_{i,:}A_{i,:}}{\|A_{i,:}\|^2_2}\right)(Y^{(k)}-A^+C)
\end{align*}
and
\begin{equation*}
 Y^{(k+1)} - \tilde{Y}^{(k+1)} = \frac{A^T_{i,:}}{\|A_{i,:}\|^2_2} (C_{i,:}-Z_{i,:}^{(k+1)}-A_{i,:}A^+C).
\end{equation*}
It follows from
\begin{align*}
& \langle \tilde{Y}^{(k+1)} -A^+C , Y^{(k+1)} - \tilde{Y}^{(k+1)} \rangle_F\\
& = \left\langle  \left(I-\frac{A^T_{i,:}A_{i,:}}{\|A_{i,:}\|^2_2}\right)(Y^{(k)}-A^+C) , \  \frac{A^T_{i,:}}{\|A_{i,:}\|^2_2} (C_{i,:}-Z_{i,:}^{(k+1)}-A_{i,:}A^+C) \right\rangle_F \\
& ={\rm trace}\left((Y^{(k)}-A^+C)^T \left(I-\frac{A^T_{i,:}A_{i,:}}{\|A_{i,:}\|^2_2}\right) \frac{A^T_{i,:}}{\|A_{i,:}\|^2_2} (C_{i,:}-Z_{i,:}^{(k+1)}-A_{i,:}A^+C) \right)\\
&=0 \ ( {\rm by \ } \left(I-\frac{A^T_{i,:}A_{i,:}}{\|A_{i,:}\|^2_2}\right) \frac{A^T_{i,:}}{\|A_{i,:}\|^2_2}=0)
\end{align*}
and
\begin{align*}
\left\|Y^{(k+1)} - \tilde{Y}^{(k+1)}\right\|_F^2 & = \left\|\frac{A^T_{i,:}}{\|A_{i,:}\|^2_2} (C_{i,:}-Z_{i,:}^{(k+1)}-A_{i,:}A^+C)\right\|_F^2 \\
& ={\rm trace}\left((C_{i,:}-Z_{i,:}^{(k+1)}-A_{i,:}A^+C)^T \frac{A _{i,:} }{\|A_{i,:}\|^2_2}  \frac{A^T_{i,:}}{\|A_{i,:}\|^2_2} (C_{i,:}-Z_{i,:}^{(k+1)}-A_{i,:}A^+C) \right)\\
&=\frac{\left\|C_{i,:}-Z_{i,:}^{(k+1)}-A_{i,:}A^+C \right\|_2^2}{\|A_{i,:}\|^2_2}
\end{align*}
that
\begin{align*}
\left\|Y^{(k+1)} - A^+C\right\|_F^2 &  = \left\|Y^{(k+1)} - \tilde{Y}^{(k+1)}\right\|_F^2 +\| \tilde{Y}^{(k+1)} -A^+C \|_F^2 \notag\\
& = \frac{\left\|C_{i,:}-Z_{i,:}^{(k+1)}-A_{i,:}A^+C \right\|_2^2}{\|A_{i,:}\|^2_2} +\| \tilde{Y}^{(k+1)} -A^+C \|_F^2.
\end{align*}
By taking the conditional expectation on the both sides of this equality, we have
\begin{align}\label{e44}
 E_{k} \left[\left\|Y^{(k+1)} - A^+C\right\|_F^2 \right]&  =  E_{k} \left[\frac{\left\|C_{i,:}-Z_{i,:}^{(k+1)}-A_{i,:}A^+C \right\|_2^2}{\|A_{i,:}\|^2_2}\right] + E_{k} \left[\| \tilde{Y}^{(k+1)} -A^+C \|_F^2\right].
\end{align}
Next, we give the estimates for the two parts of the right-hand side of (\ref{e44}). It follows from
\begin{align*}
 E_{k} \left[\frac{\left\|C_{i,:}-Z_{i,:}^{(k+1)}-A_{i,:}A^+C \right\|_2^2}{\|A_{i,:}\|^2_2} \right]
& = E_{k}^j E_{k}^i\left[\frac{\left\|C_{i,:}-Z_{i,:}^{(k+1)}-A_{i,:}A^+C \right\|_2^2}{\|A_{i,:}\|^2_2} \right]\\
& = E_{k}^j\left[ \frac{1}{ \|A\|^2_F}\sum_{i=1}^m \left\|C_{i,:}-Z_{i,:}^{(k+1)}-A_{i,:}A^+C \right\|_2^2 \right]\\
& =  \frac{1}{ \|A\|^2_F}E_{k}^j\left[ \left\|C -Z ^{(k+1)}-A A^+C \right\|_F^2  \right]\\
& =  \frac{1}{ \|A\|^2_F}E_{k}\left[ \left\| Z ^{(k+1)}-(I-A A^+)C \right\|_F^2  \right]
\end{align*}
that
\begin{align}\label{e45}
E \left[\frac{\left\|C_{i,:}-Z_{i,:}^{(k+1)}-A_{i,:}A^+C \right\|_2^2}{\|A_{i,:}\|^2_2} \right]& = \frac{1}{ \|A\|^2_F}E \left[  \left\| Z ^{(k+1)}-(I-A A^+)C \right\|_F^2   \right] \notag\\
& \le \frac{\rho_1^{k+1}}{ \|A\|^2_F}\left\|Z ^{(0)}-(I-A A^+)C \right\|_F^2 \ ({\rm by\ Lemma}\ \ref{lem43}).
\end{align}
By $Y^{(0)}_{:,j}\in R(A^T)$  and $(A^+C)_{:,j}\in R(A^T)$, $j=1,\ldots, n$, we have $(Y^{(0)}-A^+C)_{:,j}\in R(A^T),\ j=1,\ldots, n$. Then, by $Z^{(0)}_{:,j}\in C_{:,j}+R(A)$, it is easy to show that  $Z^{(k)}_{:,j}\in C_{:,j}+R(A)$ and $(Y^{(k)}-A^+C)_{:,j}\in R(A^T),\ j=1,\ldots, n$ by induction. It follows from
\begin{align*}
E_{k}[ \|\tilde{Y}^{(k+1)} -A^+C \|_F^2 ] & = E_{k}^i \left[\left\|\left(I-\frac{A^T_{i,:}A_{i,:}}{\|A_{i,:}\|^2_2}\right)(Y^{(k)}-A^+C)\right\|_F^2 \right]\\
& = \sum_{i=1}^m  \frac{\|A_{i,:}\|^2_2}{ \|A\|^2_F}\left\|\left(I-\frac{A^T_{i,:}A_{i,:}}{\|A_{i,:}\|^2_2}\right)(Y^{(k)}-A^+C)\right\|_F^2\\
& = \left\|Y^{(k)}-A^+C \right\|_F^2-\frac{\|A(Y^{(k)}-A^+C)\|_F^2}{\|A\|^2_F}  \ ({\rm \ by\  Lemma}\ \ref{lem41})\\
&\le \left\|Y^{(k)}-A^+C \right\|_F^2- \frac{\sigma^2_{\min}(A)}{\|A\|^2_F}\left\|Y^{(k)}-A^+C \right\|_F^2 \ ({\rm \ by\  Lemma}\ \ref{lem3})\\
&= \rho_1 \left\|Y^{(k)}-A^+C \right\|_F^2
\end{align*}
that
\begin{align}\label{e46}
E \left[ \left\|\tilde{Y}^{(k+1)} -A^+C \right\|_F^2 \right]\le \rho_1  E \left[ \left\|Y^{(k)}-A^+C \right\|_F^2\right].
\end{align}
Combining (\ref{e44}),(\ref{e45}) and (\ref{e46}) yields
\begin{align*}
E \left[ \| Y^{(k+1)} - A^+C\|_F^2 \right] & = E \left[\frac{\left\|C_{i,:}-Z_{i,:}^{(k+1)}-A_{i,:}A^+C \right\|_2^2}{\|A_{i,:}\|^2_2} \right] + E [ \|\tilde{Y}^{(k+1)} -A^+C \|_F^2 ]\\
&  \le \frac{\rho_1^{k+1}}{ \|A\|^2_F}\left\|Z ^{(0)}-(I-A A^+)C \right\|_F^2 + \rho_1 E \left[ \left\|Y^{(k)}-A^+C \right\|_F^2 \right] \\
& \le \frac{2\rho_1^{k+1}}{ \|A\|^2_F}\left\|Z ^{(0)}-(I-A A^+)C \right\|_F^2 + \rho_1^2 E \left[ \left\|Y^{(k-1)}-A^+C \right\|_F^2 \right] \\
& \le \cdots \le \frac{(k+1)\rho_1^{k+1}}{ \|A\|^2_F}\left\|Z ^{(0)}-(I-A A^+)C \right\|_F^2 + \rho_1^{k+1} \left\|Y^{(0)}-A^+C \right\|_F^2 .
\end{align*}
This completes the proof.
\end{proof}

With these preparations, the convergence proof of algorithm \ref{alg41} is given below.
\begin{theorem}\label{t41}
Let $\{X^{(k)}\}$ denote the sequence that is generated by Algorithm \ref{alg41} ($B$ is full column rank) with the initial guess $X^{(0)}\in R^{p\times q}$ in which $(X^{(0)}_{i,:})^T\in R(B),\ i=1,\ldots, p$. The sequence $\{Y^{(k)}\}$ is generated by the REK method for $AY=C$  starting from the initial matrix $Y^{(0)}=X^{(0)}B$ in which $Y^{(0)}_{:,j}\in R(A^T)$  and $Z^{(0)}\in R^{m\times n}$ in which $Z^{(0)}_{:,j}\in C_{:,j}+R(A),\ j=1,\ldots, n$. In exact arithmetic, it holds
\begin{align}\label{e47}
E \left[ \| X^{(k)} - A^+CB^+\|_F^2 \right]   \le & \left(1+\frac{ \sigma^2_{\max}(B) \eta }{\|B \|^2_F}\right) \rho_2^{k} \left\|X^{(0)}-A^+CB^+ \right\|_F^2 \notag\\
& + \frac{\gamma }{\|B \|^2_F}\rho_2^{k}\left\|Z ^{(0)}-(I-A A^+)C \right\|_F^2 .
\end{align}
where the $i$th row of $A$ is selected with probability $p_i(A)=\frac{\|A_{i,:}\|_2^2}{ \|A \|^2_F}$, the $j$th column of $A$ is selected with probability $\hat{p}_j(A)=\frac{\|A_{:,j}\|_2^2}{\|A\|^2_F}$, the $j$th column of $B$ is selected with probability $ \hat{p}_j(B)=\frac{\|B_{:,j}\|_2^2}{\|B\|_F^2}$ and
\begin{equation*}
  \gamma= \left\{
\begin{array}{ll}
\frac{\rho_1\rho_2 }{(\rho_2-\rho_1)^2}, & if \  \rho_1<\rho_2,\\
\frac{k\rho_1}{\rho_1-\rho_2}\left(\frac{\rho_1}{\rho_2}\right)^{k}, & if \  \rho_1>\rho_2, \vspace{0.2cm}\\
\frac{k(k+1)}{2}  , & if \  \rho_1=\rho_2.
\end{array}
 \right.
\end{equation*}
\end{theorem}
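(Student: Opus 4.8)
\emph{Proof proposal.} The plan is to imitate the proof of Theorem \ref{t21} almost verbatim, the only structural difference being that the column iterate $Y^{(k)}$ is now produced by the REK recursion rather than by the plain Kaczmarz recursion (\ref{e23}), so that Lemma \ref{lem44} replaces Lemma \ref{th1} in the estimate for $\|Y^{(k)}-A^+C\|_F$. Write $X^{*}=A^+CB^+$ and $Y^{*}=A^+C$. Since $B$ has full column rank we have $B^+B=I$, hence $A^+CB^+B=A^+C=Y^{*}$, and in particular $\|Y^{(0)}-Y^{*}\|_F=\|(X^{(0)}-X^{*})B\|_F\le\sigma_{\max}(B)\|X^{(0)}-X^{*}\|_F$, which is exactly the inequality used in Theorem \ref{t21}.

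For the per-iteration analysis, fix $k$ and let $\tilde X^{(k+1)}$ be the orthogonal projection of $X^{(k)}$ onto $\tilde H_l=\{X:XB_{:,l}=Y^{*}_{:,l}\}$, where $l$ is the column index drawn at step $k$. As in the proof of Theorem \ref{t21}, the update (\ref{e24}) together with Lemma \ref{lem2} gives the splitting $X^{(k+1)}=\tilde X^{(k+1)}+\frac{Y^{(k+1)}_{:,l}-Y^{*}_{:,l}}{\|B_{:,l}\|_2^2}B_{:,l}^T$; the cross term $\langle X^{(k+1)}-\tilde X^{(k+1)},\tilde X^{(k+1)}-X^{*}\rangle_F$ vanishes because $\tilde X^{(k+1)}B_{:,l}=X^{*}B_{:,l}=Y^{*}_{:,l}$, so $\|X^{(k+1)}-X^{*}\|_F^2=\|\tilde X^{(k+1)}-X^{*}\|_F^2+\|Y^{(k+1)}_{:,l}-Y^{*}_{:,l}\|_2^2/\|B_{:,l}\|_2^2$. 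Taking the conditional expectation and using $E_k=E_k^iE_k^j$ exactly as in (\ref{exp})--(\ref{exp2}), the second term averages to $\frac{1}{\|B\|_F^2}E_k[\|Y^{(k+1)}-Y^{*}\|_F^2]$, while for the first term one checks by induction on (\ref{e24}) that $(X^{(k)}-X^{*})^T_{i,:}\in R(B)$ for all $i$ (using $(X^{(0)}_{i,:})^T\in R(B)$ and that (\ref{e24}) only adds multiples of $B_{:,l}^T$) and then applies Lemma \ref{lem4} to get $E_k[\|\tilde X^{(k+1)}-X^{*}\|_F^2]\le\rho_2\|X^{(k)}-X^{*}\|_F^2$. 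Combining, $E[\|X^{(k+1)}-X^{*}\|_F^2]\le\rho_2\,E[\|X^{(k)}-X^{*}\|_F^2]+\frac{1}{\|B\|_F^2}E[\|Y^{(k+1)}-Y^{*}\|_F^2]$.

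Unrolling this recursion yields $E[\|X^{(k)}-X^{*}\|_F^2]\le\rho_2^{\,k}\|X^{(0)}-X^{*}\|_F^2+\frac{1}{\|B\|_F^2}\sum_{m=1}^{k}\rho_2^{\,k-m}E[\|Y^{(m)}-Y^{*}\|_F^2]$, into which I insert Lemma \ref{lem44}, namely $E[\|Y^{(m)}-Y^{*}\|_F^2]\le\frac{m\rho_1^{\,m}}{\|A\|_F^2}\|Z^{(0)}-(I-AA^+)C\|_F^2+\rho_1^{\,m}\|Y^{(0)}-Y^{*}\|_F^2$. The terms carrying $\|Y^{(0)}-Y^{*}\|_F^2$ reproduce, via the geometric sum $\sum_{m\ge1}\rho_2^{\,k-m}\rho_1^{\,m}$ and $\|Y^{(0)}-Y^{*}\|_F^2\le\sigma^2_{\max}(B)\|X^{(0)}-X^{*}\|_F^2$, precisely the factor $1+\sigma^2_{\max}(B)\eta/\|B\|_F^2$ with the same $\eta$ as in Theorem \ref{t21}. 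The genuinely new ingredient is the weighted sum $S_k:=\sum_{m=1}^{k}m\,\rho_2^{\,k-m}\rho_1^{\,m}=\rho_2^{\,k}\sum_{m=1}^{k}m(\rho_1/\rho_2)^{m}$ arising from the $Z^{(0)}$ contribution (with prefactor coming from $\|A\|_F^{-2}\|B\|_F^{-2}$); bounding it in the three regimes — $\sum_{m\ge1}mt^m=t/(1-t)^2$ when $t=\rho_1/\rho_2<1$, the exact finite sum when $t>1$, and $\sum_{m=1}^{k}m=k(k+1)/2$ when $t=1$ — gives $S_k\le\rho_2^{\,k}\gamma$ with $\gamma$ as stated, the extra factor $k$ in the cases $\rho_1\ge\rho_2$ being exactly what distinguishes this estimate from the one in Theorem \ref{t21}. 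Collecting the two contributions gives (\ref{e47}). I expect the main obstacle to be purely bookkeeping: propagating the two independent error sources ($X^{(0)}$ and $Z^{(0)}$) simultaneously through the recursion and evaluating the weighted sum $S_k$ correctly in each of the three cases; the projection/orthogonality machinery is identical to that already set up for Theorem \ref{t21} and Lemma \ref{lem44}.
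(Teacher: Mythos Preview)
Your proposal is correct and follows essentially the same route as the paper: set up the per-step recursion $E[\|X^{(k+1)}-X^*\|_F^2]\le\rho_2\,E[\|X^{(k)}-X^*\|_F^2]+\|B\|_F^{-2}E[\|Y^{(k+1)}-Y^*\|_F^2]$ via the splitting from Lemma \ref{lem2} and Lemma \ref{lem4}, unroll it, insert the REK bound of Lemma \ref{lem44}, use $B^+B=I$ to control $\|Y^{(0)}-Y^*\|_F$, and then evaluate the two sums to obtain $\eta$ and $\gamma$ in the three regimes. The only cosmetic point is that your bookkeeping produces the $Z^{(0)}$ term with prefactor $\gamma/(\|A\|_F^2\|B\|_F^2)$, which is sharper than the stated $\gamma/\|B\|_F^2$; since $\|A\|_F^{-2}\le 1$ this implies (\ref{e47}) as written.
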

\begin{proof}
Similar to the proof of Theorem \ref{t21} by using Lemma \ref{lem44} and Lemma \ref{lem4}, we can get
\begin{align}\label{e48}
& E \left[\left\|X^{(k+1)}-A^+CB^+\right\|_F^2 \right]
\le \rho_2 E  \left[ \left\|X^{(k)}-A^+CB^+\right\|_F^2\right] + \frac{1}{\|B \|^2_F }E  \left[\|Y^{(k+1)}-A^+C\|_F^2\right]\notag\\
&\ \ \ \   \le \rho_2 E  \left[ \left\|X^{(k)}-A^+CB^+\right\|_F^2\right]   +   \frac{(k+1)\rho_1^{k+1}}{ \|A\|^2_F \|B \|^2_F }\left\|Z ^{(0)}-(I-A A^+)C \right\|_F^2 +\ \frac{\rho_1^{k+1}}{\|B \|^2_F } \left\|Y^{(0)}-A^+C \right\|_F^2\notag\\
 &\ \ \ \    \le \rho_2^2 E  \left[ \|X^{(k-1)}-A^+CB^+\|_F^2\right] +  \frac{\rho_1^{k+1}+\rho_1^k\rho_2}{\|B \|^2_F } \left\|Y^{(0)}-A^+C \right\|_F^2 \notag\\
&\ \ \ \ \ \  \ \ \  + \frac{(k+1)\rho_1^{k+1}+k\rho_1^k\rho_2}{ \|A\|^2_F \|B \|^2_F }\left\|Z ^{(0)}-(I-A A^+)C \right\|_F^2
  \   \le \cdots \notag \\
  &\ \ \ \  \le \rho_2^{k+1} \left\|X^{(0)}-A^+CB^+\right\|_F^2 +  \frac{\sum\limits_{j=0}^{k}\rho_1^{j+1}\rho_2^{k-j} }{\|B \|^2_F} \left\|Y^{(0)}-A^+C\right\|_F^2  \notag \\
 &\ \ \ \ \ \  \ \ \  +  \frac{\sum\limits_{j=0}^{k}(j+1)\rho_1^{j+1}\rho_2^{k-j} }{\|B \|^2_F} \left\|Z ^{(0)}-(I-A A^+)C \right\|_F^2 .
\end{align}
Since  $B$ is  full column rank , then $A^+ C = A^+ CB^+ B $. Combining with $Y^{(0)}=X^{(0)}B$, we can get $$\left\|Y^{(0)}-A^+C\right\|_F^2 = \|(X^{(0)}-A^+CB^+)B\|_F^2 \le \sigma^2_{\max}(B)\left\|X^{(0)}-A^+CB^+\right\|_F^2.$$
Substituting this inequality into (\ref{e48}), then
 \begin{align*}
E \left[\left\|X^{(k+1)}-A^+CB^+\right\|_F^2 \right]
 \le & \left(\rho_2^{k+1}+\frac{ \sigma^2_{\max}(B) }{\|B \|^2_F}  \sum\limits_{j=0}^{k}\rho_1^{j+1}\rho_2^{k-j} \right) \left\|X^{(0)}-A^+CB^+\right\|_F^2 \notag\\
 & +  \frac{ \sum\limits_{j=0}^{k}(j+1)\rho_1^{j+1}\rho_2^{k-j} }{\|B \|^2_F} \left\|Z ^{(0)}-(I-A A^+)C \right\|_F^2\notag\\
 \le & \left(1+\frac{ \eta\sigma^2_{\max}(B) }{\|B \|^2_F}  \right)\rho_2^{k+1} \left\|X^{(0)}-A^+CB^+\right\|_F^2 \notag\\
 &  +  \frac{ \sum\limits_{j=0}^{k}(j+1)(\frac{\rho_1}{\rho_2})^{j+1} }{\|B \|^2_F}\rho_2^{k+1} \left\|Z ^{(0)}-(I-A A^+)C \right\|_F^2.
\end{align*}
If $\rho_1<\rho_2$ , from the  analytical properties of geometric series, we can get $$\sum\limits_{j=0}^{k}(j+1)(\frac{\rho_1}{\rho_2})^{j+1}  \le  \frac{\rho_1\rho_2}{(\rho_2-\rho_1)^2}. $$
If $\rho_1>\rho_2$ , then $$\sum\limits_{j=0}^{k}(j+1)(\frac{\rho_1}{\rho_2})^{j+1} =(\frac{\rho_1}{\rho_2})^{k+1}\sum\limits_{i=0}^{k}(k+1-i)(\frac{\rho_2}{\rho_1})^{i} \le  \frac{(k+1)\rho_1}{\rho_1-\rho_2}(\frac{\rho_1}{\rho_2})^{k+1}.$$
If $\rho_1=\rho_2 $ , then  $$\sum\limits_{j=0}^{k}(j+1)(\frac{\rho_1}{\rho_2})^{j+1} = \frac{(k+1)(k+2)}{2}.$$
This completes the proof.
\end{proof}

\subsection{$AXB=C$ Inconsistent, $A$ Not Full Rank And $B$ Full Row Rank ($q\leq n$)}
The matrix equation $AY=C$ is solved by the REK method, while the matrix equation $XB=Y$ is solved by the RGS method, because $XB=Y$ has a unique least-squares solution ($B^T$ is full column rank). For this case, we call it IME- REKRGS algorithm to solve $AXB=C$, and the algorithm is as follows.
\begin{algorithm}
  \leftline{\caption{REK-RGS Method for Inconsistent Matrix Equation $AXB=C$ (IME-REKRGS)\label{alg44}}}
  \begin{algorithmic}[1]
    \Require
      $A\in R^{m\times p}$, $B\in R^{q\times n}$, $C\in R^{m\times n}$, $X^{(0)}=0\in R^{p\times q}$, $Y^{(0)}=0 \in R^{p\times n}$, $Z^{(0)}=C$, $E^{(0)}=0 \in R^{p\times n}$, $K\in R$
    \State For $i=1:m$, $M(i)=\|A_{i,:}\|_2^2$
    \State For $j=1:p$, $N(j)=\|A_{:,j}\|_2^2$
    \State For $l=1:q$, $T(l)=\|B_{l,:}\|_2^2$
    \For {$k=0, 1,\cdots, K-1$}
    \State Pick $i$ with probability $p_{i}(A)=\frac{\|A_{i,:}\|_2^2}{\|A\|^2_F}$ , $j$ with probability $\hat{p}_{j}(A)=\frac{\|A_{:,j}\|_2^2}{\|A\|^2_F}$ and $l$ with probability $ p_{l}(B)=\frac{\|B_{l,:}\|_2^2}{\|B\|^2_F}$
    \State Compute $Z^{(k+1)}=Z^{(k)}-\frac{A_{:,j}}{N(j)}A_{:,j}^TZ^{(k)}$
    \State Compute $Y^{(k+1)}=Y^{(k)}+\frac{A_{i,:}^T}{M(i)}(C_{i,:}-Z_{i,:}^{(k+1)}-A_{i,:}Y^{(k)})$
    \State Compute $U^{(k)}=\frac{E^{(k)}B_{l,:}^T}{T(l)}$, $X^{(k+1)}_{:,l}=X^{(k)}_{:,l}+U^{(k)}$, $E^{(k+1)}=E^{(k)}-U^{(k)}B_{l,:}$
    \EndFor
    \State Output $X^{(K)}$
  \end{algorithmic}
\end{algorithm}

Similarly, let $Y=AX$, we can transform the equation $AXB=C$ into the system of equations composed of two equations
\begin{equation}
\left\{
\begin{array}{c}
B^TY^T=C^T,\\
AX=Y.
\end{array}
\right.
\end{equation}

The matrix equation $B^TY^T=C^T$ is solved by the RGS method, because it has a unique least-squares solution ($B^T$ is full column rank), while the matrix equation $AX=Y$ is solved by the REK method. For this case, we call it IME-RGSREK algorithm to solve $AXB=C$.

The above two methods can be seen as the combination of two separation algorithms, so we  will not discuss the algorithms in detail,  but only give  the  convergence results of IME-REKRGS method and the proof is omitted.

\begin{theorem}\label{t42}
Let $\{X^{(k)}\}$ denote the sequence that is generated by IME-REKRGS method ($B$ is full row rank) with the initial guess $X^{(0)}\in R^{p\times q}$. The sequence $\{Y^{(k)}\}$ is generated by the REK method for  $AY=C$ starting from the initial matrix $Y^{(0)}=X^{(0)}B $ in which $Y^{(0)}_{:,j}\in R(A^T)$ and the initial gauss $Z^{(0)}\in R^{m\times n}$ in which $Z^{(0)}_{:,j}\in C_{:,j}+R(A),\ j=1,\ldots, n$. In exact arithmetic, it holds
\begin{align}\label{e411}
E [ \|( X^{(k)} - A^+CB^+)B\|_F^2 ]  & \le (1+\eta) \rho_2^{k} \left\|(X^{(0)}-A^+CB^+)B \right\|_F^2 +\eta \rho_2^{k} \left\| A^+C (I-B^+B) \right\|_F^2\notag\\
&\ \ \ \ +\frac{\gamma}{\|A\|_F^2}\rho_2^{k}\left\|Z ^{(0)}-(I-A A^+)C \right\|_F^2 .
\end{align}
where the $i$th row of $A$ is selected with probability $p_i(A)=\frac{\|A_{i,:}\|_2^2}{ \|A \|^2_F}$, the $j$th column of $A$ is selected with probability $\hat{p}_j(A)=\frac{\|A_{:,j}\|_2^2}{\|A\|^2_F}$, the $i$th row of $B$ is selected with probability $ p_i(B)=\frac{\|B_{i,:}\|_2^2}{\|B\|_F^2}$.
\end{theorem}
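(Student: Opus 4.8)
The plan is to follow the same two–stage scheme as in the proofs of Theorems \ref{t31} and \ref{t41}. Write $Y^{*}=A^{+}C$ and $X^{*}=A^{+}CB^{+}$. In IME-REKRGS the inner sequence $\{Y^{(k)}\}$ is exactly the REK iteration for $AY=C$, so its error is controlled by Lemma \ref{lem44}, while the outer sequence $\{X^{(k)}\}$ is one RGS step for $XB=Y^{(k+1)}$ (equivalently $B^{T}X^{T}=(Y^{(k+1)})^{T}$) applied to column $l$ of $X$. First I would introduce the auxiliary iterate $\tilde X^{(k+1)}$, the one-step RGS update from $X^{(k)}$ toward the clean right-hand side $Y^{*}$, so that
\begin{equation*}
X^{(k+1)}-\tilde X^{(k+1)}=\frac{(Y^{(k+1)}-Y^{*})B_{l,:}^{T}}{\|B_{l,:}\|_{2}^{2}}I_{l,:}.
\end{equation*}
The one algebraic fact that must be checked here is $X^{*}BB_{l,:}^{T}=Y^{*}B_{l,:}^{T}$; it holds because $B^{+}B$ is the orthogonal projector onto $R(B^{T})$ and $B_{l,:}^{T}\in R(B^{T})$, so $X^{*}B=A^{+}CB^{+}B$ and $Y^{*}=A^{+}C$ coincide after projecting onto $B_{l,:}^{T}$. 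Consequently $\tilde X^{(k+1)}BB_{l,:}^{T}=X^{*}BB_{l,:}^{T}$, which makes the cross term vanish and gives, after right-multiplication by $B$, the Pythagorean splitting $\|(X^{(k+1)}-X^{*})B\|_{F}^{2}=\|(X^{(k+1)}-\tilde X^{(k+1)})B\|_{F}^{2}+\|(\tilde X^{(k+1)}-X^{*})B\|_{F}^{2}$, together with the perturbation estimate $\|(X^{(k+1)}-\tilde X^{(k+1)})B\|_{F}^{2}=\|(Y^{(k+1)}-Y^{*})B_{l,:}^{T}\|_{2}^{2}/\|B_{l,:}\|_{2}^{2}\le\|Y^{(k+1)}-Y^{*}\|_{F}^{2}$.

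Second, I would prove the contraction $E_{k}\!\left[\|(\tilde X^{(k+1)}-X^{*})B\|_{F}^{2}\right]\le\rho_{2}\|(X^{(k)}-X^{*})B\|_{F}^{2}$, which is Lemma \ref{lem32} with $\|A(\cdot)B\|_{F}$ replaced by $\|(\cdot)B\|_{F}$; the computation is identical, the only point being that every row of $(X^{(k)}-X^{*})B$ lies in $R(B^{T})$, so Lemma \ref{lem3} applies and bounds $\|B((X^{(k)}-X^{*})B)^{T}\|_{F}^{2}$ from below by $\sigma_{\min}^{2}(B)\|(X^{(k)}-X^{*})B\|_{F}^{2}$. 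Combining the three estimates above and taking conditional expectations yields the recursion $E_{k}[\|(X^{(k+1)}-X^{*})B\|_{F}^{2}]\le\rho_{2}\|(X^{(k)}-X^{*})B\|_{F}^{2}+E_{k}[\|Y^{(k+1)}-Y^{*}\|_{F}^{2}]$. Taking full expectation, unrolling, and inserting the bound of Lemma \ref{lem44} on $E[\|Y^{(k+1)}-A^{+}C\|_{F}^{2}]$ produces
\begin{equation*}
E\!\left[\|(X^{(k)}-X^{*})B\|_{F}^{2}\right]\le\rho_{2}^{k}\|(X^{(0)}-X^{*})B\|_{F}^{2}+\rho_{2}^{k}\!\sum_{m=1}^{k}\Big(\tfrac{\rho_{1}}{\rho_{2}}\Big)^{m}\Big[\tfrac{m}{\|A\|_{F}^{2}}\|Z^{(0)}-(I-AA^{+})C\|_{F}^{2}+\|Y^{(0)}-A^{+}C\|_{F}^{2}\Big].
\end{equation*}

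Third, I would rewrite $\|Y^{(0)}-A^{+}C\|_{F}^{2}$ using $Y^{(0)}=X^{(0)}B$ and the decomposition $A^{+}C=X^{*}B+A^{+}C(I-B^{+}B)$: the cross term $\langle(X^{(0)}-X^{*})B,\,A^{+}C(I-B^{+}B)\rangle_{F}$ equals $\mathrm{trace}\big((I-B^{+}B)B^{T}(X^{(0)}-X^{*})^{T}A^{+}C\big)$, which vanishes because $(I-B^{+}B)B^{T}=0$; hence $\|Y^{(0)}-A^{+}C\|_{F}^{2}=\|(X^{(0)}-X^{*})B\|_{F}^{2}+\|A^{+}C(I-B^{+}B)\|_{F}^{2}$. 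Substituting this and bounding the two convolution sums exactly as in Theorems \ref{t21} and \ref{t41}, namely $\sum_{m=1}^{k}(\rho_{1}/\rho_{2})^{m}\le\eta$ and $\sum_{m=1}^{k}m(\rho_{1}/\rho_{2})^{m}\le\gamma$ with the three cases $\rho_{1}\lessgtr\rho_{2}$, collects the coefficient of $\|(X^{(0)}-X^{*})B\|_{F}^{2}$ into $(1+\eta)\rho_{2}^{k}$, that of $\|A^{+}C(I-B^{+}B)\|_{F}^{2}$ into $\eta\rho_{2}^{k}$, and that of $\|Z^{(0)}-(I-AA^{+})C\|_{F}^{2}$ into $\gamma\rho_{2}^{k}/\|A\|_{F}^{2}$, which is precisely (\ref{e411}).

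The step I expect to be the main obstacle is the bookkeeping in the unrolled recursion: the $Z^{(0)}$-contribution already enters weighted by a factor $m\rho_{1}^{m}$ through Lemma \ref{lem44}, is then convolved against the geometric sequence $\rho_{2}^{k-m}$ coming from the outer iteration, so one must evaluate $\sum_{m=1}^{k}m(\rho_{1}/\rho_{2})^{m}$ and keep the constant $\gamma$ and all index shifts consistent across the regimes $\rho_{1}<\rho_{2}$, $\rho_{1}>\rho_{2}$, $\rho_{1}=\rho_{2}$. The only genuinely new ingredients relative to Section 3 are the identity $(I-B^{+}B)B^{T}=0$, giving the orthogonal split of $Y^{(0)}-A^{+}C$, and the verification $X^{*}BB_{l,:}^{T}=Y^{*}B_{l,:}^{T}$; both are exactly where the full-row-rank hypothesis on $B$ is used.
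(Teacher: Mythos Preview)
Your proposal is correct and follows exactly the route the paper indicates (it explicitly omits the proof, saying the method ``can be seen as the combination of two separation algorithms'' and that the argument parallels the earlier ones): you splice the REK error bound of Lemma~\ref{lem44} for $Y^{(k)}$ into an RGS-type recursion for $\|(X^{(k)}-X^{*})B\|_{F}^{2}$, using the $A$-free analogue of Lemma~\ref{lem32} and the orthogonal decomposition of $Y^{(0)}-A^{+}C$ as in Theorem~\ref{t31}. One small remark: the two identities you flag at the end, $(I-B^{+}B)B^{T}=0$ and $B^{+}BB_{l,:}^{T}=B_{l,:}^{T}$, are consequences of the Moore--Penrose relations and hold for any $B$; the full-row-rank assumption on $B$ is not used in the bound \eqref{e411} itself but only afterward, to pass from $\|(X^{(k)}-X^{*})B\|_{F}\to 0$ to $X^{(k)}\to X^{*}$.
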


\subsection{Double Extended Kaczmarz Method for Solving General Matrix Equation $AXB=C$}
In general, the matrix Eq. (\ref{e11}) may be inconsistent, $A$ and $B$ are not full rank, so we consider both matrix equations $AY=C$ and $XB=Y$ are solved by the REK method. The algorithm is described as follows.

\begin{algorithm}
  \leftline{\caption{Double REK Method for general $AXB=C$ (DREK)\label{alg42}}}
  \begin{algorithmic}[1]
    \Require
      $A\in R^{m\times p}$, $B\in R^{q\times n}$, $C\in R^{m\times n}$, $X^{(0)}=0\in R^{p\times q}$, $Y^{(0)}=0\in R^{p\times n}$, $Z^{(0)}=C$, $K_1,\ K_2\in R$
    \State For $i=1:m$, $Mi(i)=\|A_{i,:}\|_2^2$; For $j=1:p$, $Mj(j)=\|A_{:,j}\|_2^2$
    \State For $i=1:q$, $Ni(i)=\|B_{i,:}\|_2^2$; For $j=1:n$, $Nj(j)=\|B_{:,j}\|_2^2$
    \For {$k=0,1,2,\cdots, K_1-1$}
    \State Set $p_i(A)=\frac{\|A_{i,:}\|_2^2}{ \|A \|^2_F}$ and $\hat{p}_j(A)=\frac{\|A_{:,j}\|_2^2}{\|A\|^2_F}$
    \State Compute $Z^{(k+1)}=Z^{(k)}-\frac{A_{:,j}}{N(j)}(A_{:,j}^TZ^{(k)})$
    \State Compute $Y^{(k+1)}=Y^{(k)}+\frac{A_{i,:}^T}{M(i)}(C_{i,:}-Z_{i,:}^{(k+1)}-A_{i,:}Y^{(k)})$
    \EndFor
    \State Set $W^{(0)}=(Y^{(K_1)})^T$
    \For {$k=0,1,2,\cdots, K_2-1$}
    \State Set $p_s(B)=\frac{\|B_{s,:}\|_2^2}{ \|B\|^2_F}$ and $\hat{p}_t(B)=\frac{\|B_{:,t}\|_2^2}{\|B\|^2_F}$
    \State Compute $W^{(k+1)}=W^{(k)}-\frac{B_{s,:}^T}{Ni(s)}B_{s,:}W^{(k)}$
    \State Compute $X^{(k+1)}=X^{(k)}+(Y_{:,t}^{(K_1)}-(W_{t,:}^{(k+1)})^T-X^{(k)}B_{:,t})\frac{B_{:,t}^T}{Nj(t)}$
    \EndFor
    \State Output $X^{(K_2)}$
  \end{algorithmic}
\end{algorithm}

The convergence result is the superposition of the corresponding convergence results of two REK methods and the proof is omitted.

Similar to the DREK method, we can employ the double REGS (DREGS) method to solve general matrix equation $AXB=C$. Since the convergence results and proof methods are very similar to the previous ones, we omit them.
\begin{algorithm}
  \leftline{\caption{Double REGS Method for general $AXB=C$ (DREGS)\label{alg43}}}
  \begin{algorithmic}[1]
    \Require
      $A\in R^{m\times p}$, $B\in R^{q\times n}$, $C\in R^{m\times n}$, $X^{(0)}=0\in R^{p\times q}$, $Y^{(0)}=0\in R^{p\times n}$, $F^{(0)}\in R^{p\times n}$, $U^{(0)}\in R^{p\times q}$, $R^{(0)}=C$, $K_1,\ K_2\in R$
    \State For $i=1:m$, $Mi(i)=\|A_{i,:}\|_2^2$; For $j=1:p$, $Mj(j)=\|A_{:,j}\|_2^2$
    \State For $i=1:q$, $Ni(i)=\|B_{i,:}\|_2^2$; For $j=1:n$, $Nj(j)=\|B_{:,j}\|_2^2$
    \For {$k=0,1,2,\cdots, K_1-1$}
    \State Set $p_i(A)=\frac{\|A_{i,:}\|_2^2}{ \|A \|^2_F}$ and $\hat{p}_j(A)=\frac{\|A_{:,j}\|_2^2}{\|A\|^2_F}$
    \State Compute  $W^{(k)}=\frac{A_{:,j}^T R^{(k)}}{Mj(j)}$, $F^{(k+1)}_{j,:}=F^{(k)}_{j,:}+W^{(k)}$, $R^{(k+1)}=R^{(k)}-A_{:,j}W^{(k)}$
    \State Compute $Y^{(k+1)}=Y^{(k)}-A_{i,:}^T\frac{A_{i,:}(Y^{(k)}-F^{(k+1)})}{Mi(i)}$
    \EndFor
    \State Set $E^{(0)}=Y^{(K_1)}$
    \For {$k=0,1,2,\cdots, K_2-1$}
    \State Set $p_s(B)=\frac{\|B_{s,:}\|_2^2}{ \|B\|^2_F}$ and $\hat{p}_t(B)=\frac{\|B_{:,t}\|_2^2}{\|B\|^2_F}$
    \State Compute $V^{(k)}=\frac{E^{(k)}B_{s,:}^T}{Ni(s)}$, $U^{(k+1)}_{:,i}=U^{(k)}_{:,i}+V^{(k)}$, $E^{(k+1)}=E^{(k)}-V^{(k)}B_{s,:}$
    \State Compute $X^{(k+1)}=X^{(k)}-\frac{(X^{(k)}-U^{(k+1)})B_{:,t}}{Nj(t)}B_{:,t}^T$
    \EndFor
    \State Output $X^{(K_2)}$
  \end{algorithmic}
\end{algorithm}

\section{Numerical Experiments}
In this section, we will present some experiment results of the proposed algorithms for solving various matrix equations, and compare them with ME-RGRK and ME-MWRK in \cite{WLZ22} for consistent matrix equations and RBCD in \cite{DRS22} for inconsistent matrix equations.
 All experiments are carried out by using MATLAB (version R2020a) on a DESKTOP-8CBRR86
with Intel(R) Core(TM) i7-4712MQ CPU @2.30GHz   2.29GHz, RAM 8GB and Windows 10.

All computations are started from the initial guess $X^{(0)}=0, Y^{(0)}=0$, and terminated once the relative error (RE) of the solution, defined by
 $$RE=\frac{\|X^{(k)}-X^*\|_F^2}{\|X^*\|_F^2}$$
 at the the current iterate $X^{(k)}$, satisfies $RE<10^{-6}$ or exceeds maximum iteration $K=50000$, where $X^*=A^+CB^+$.
We report the average number of iterations (denoted as ``IT") and the average computing time in seconds (denoted as``CPU") for 20  trials repeated runs of the corresponding method. We consider the following methods:
\begin{itemize}
\item CME-RK (Algorithm \ref{alg21}), compared with ME-RGRK and ME-MWRK in \cite{WLZ22} for consistent matrix equations. We use $\theta=0.5$ in ME-GRRK method which is the same as in reference \cite{WLZ22}.
\item IME-RGS (Algorithm \ref{alg31}), IME-REKRGS (Algorithm \ref{alg44}), compared with  RBCD in \cite{DRS22} for inconsistent matrix equations. We use $\alpha=\frac{1.5}{\|A\|_2^2}$ in RBCD method, which is the same as in reference \cite{DRS22} .
\item IME-REKRK (Algorithm \ref{alg41}), DREK (Algorithm \ref{alg42}) and DREGS (Algorithm \ref{alg43}) for inconsistent matrix equations where the last two methods have no requirements on whether matrix $A$ and matrix $B$ have full row rank or full column rank.
\end{itemize}
We test the performance of various methods with synthetic dense  data and real-world sparse data. Synthetic data is generated as follows.
\begin{itemize}
\item Type I: For given $m,p,q,n$, the entries of $A$ and $B$ are generated from a standard normal distribution, i.e., $A=randn(m,p), B=randn(q,n).$ We also construct the rank-deficient matrix by $ A=randn(m,p/2), A=[A, A]$ or $ B=randn(q/2,n), B=[B; B]$ and so on.
\item Type II: Like \cite{NZ22}, for given $m,p$, and $r_1=rank(A)$, we construct a matrix $A$ by $A=U_1D_1V_1^T$, where $U_1\in R^{m\times r_1}$ and  $V_1\in R^{p\times r_1}$ are orthogonal columns matrices, $D\in R^{r_1\times r_1}$ is a diagonal matrix whose first $r-2$ diagonal entries are uniformly distributed numbers in $[\sigma_{\min{(A)}}, \sigma_{\max{(A)}}]$, and the last two diagonal entries are $ \sigma_{\max{(A)}},\sigma_{\min{(A)}} $. Similarly, for given $q,n$ and $r_2=rank(B)$, we construct a matrix $B$ by $B=U_2D_2V_2^T$, where $U_2\in R^{q\times r_2}$ and  $V_2\in R^{n\times r_2}$ are orthogonal columns matrices, $D\in R^{r_2\times r_2}$ is a diagonal matrix whose first $r-2$ diagonal entries are uniformly distributed numbers in $[\sigma_{\min{(B)}}, \sigma_{\max{(B)}}]$, and the last two diagonal entries are $ \sigma_{\max{(B)}},\sigma_{\min{(B)}} $.
\end{itemize}
The real-world sparse data come from the Florida sparse matrix collection \cite{DH11}. Table \ref{tab0} lists the  features of these sparse matrices.
\begin{table}[H]
\caption{The detailed features of sparse matrices from \cite{DH11}.}
\label{tab0}
\centering
\begin{tabular}{ c c c c  }
\hline
 name & size  & rank   & sparsity     \\
\hline
  ash219  & $219\times 85$   &  85  & $2.3529\%$  	\\
\hline
 ash958 &  $958\times 292$  &  292  & $0.68493\%$  	\\
\hline
 divorce &  $50\times 9$  &  9  & $50\%$  	\\
\hline
Worldcities &  $315\times 100$  &  100  & $53.625\%$  	\\
\hline
 \end{tabular}
\end{table}

\subsection{Consistent Matrix Equation}
First, we compare the performance of the  ME-RGRK, ME-MWRK  and CME-RK   methods for the consistent matrix equation $AXB=C$. To construct a consistent matrix equation, we set $C=AX^*B$, where $X^*$
 is a random matrix which is generated by $X^*=randn(p,q)$.

 \begin{example}\label{EX5.1} The  ME-RGRK, ME-MWRK  and CME-RK  methods, synthetic dense data. \upshape

 In Table \ref{tab1} and \ref{tab2}, we report the average IT and CPU of  ME-RGRK, ME-MWRK  and CME-RK  for solving consistent matrix with Type I and Type II matrices. In the following tables, the item `>' represents that the number of iteration steps exceeds the maximum iteration (50000), and the item `-' represents that the method does not converge.
From table \ref{tab1}, we can see that the CME-RK  method vastly outperforms the ME-RGRK and ME-MWRK  methods in terms of both IT and CPU times. The CME-RK  method has the least iteration steps and runs the least time regardless of whether the matrices $A$ and $B$ are full column/row  rank or not. We observe that when the linear system is consistent, the speed-up is at least 2.00, and the
biggest reaches 3.75.  As the increasing of matrix dimension, the CPU time of CME-RK method is increasing slowly, while the running time of ME-RGRK and ME-MWRK increases dramatically. The numerical advantages of CME-RK for large consistent matrix equation are more obvious in Table \ref{tab2}. Moreover, when $\frac{\sigma_{\max}(A)}{\sigma_{\min}(A)} $ and $\frac{\sigma_{\max}(B)}{\sigma_{\min}(B)}$ are large (e.g. $\frac{\sigma_{\max}}{\sigma_{\min}}=5$ ),  the convergence speed of  ME-RGRK and ME-MWRK is very slow, because the convergence rate of the two methods depends on $1-\frac{\sigma_{\min}^2(A){\sigma_{\min}^2(B)}}{\|A\|_F^2\|B\|_F^2}$.

Figure \ref{fig1} shows the plots of relative error (RE) in base-10 logarithm versus IT and CPU of different methods  with Type I ($A=randn(500,50), A=[A,A], B=randn(150,600)$ ) and Type II ($m=500, p=100, r_1=50, \frac{\sigma_{\max}(A)}{\sigma_{\min}(A)}=2, q=150, n=600, r_2=50, \frac{\sigma_{\max}(B)}{\sigma_{\min}(B)}=2$). Again, we can see the relative error of CME-RK is decreasing rapidly with the increase of iteration steps and the computing times.

\begin{table}[H]
\caption{IT and CPU of ME-RGRK, ME-MWRK, and CME-RK for the consistent matrix equations with Type I.}
\label{tab1}
\centering
\begin{tabular}{ c c c c  c c c c c c c c}
\hline
  $m$  & $p$  &$r_1$  & $q$  & $n$ &$r_2$ &  & ME-RGRK & ME-MWRK   & CME-RK  \\
\hline
\multirow{2}{*}{100}  & \multirow{2}{*}{40} & \multirow{2}{*}{40}& \multirow{2}{*}{40} & \multirow{2}{*}{100} &\multirow{2}{*}{40} &IT     & 	 49707   & 	27579 	&    1600.9	\\
    &    &    &   &  &   &CPU   &   0.71      &   2.01     &  0.06	\\
\hline
\multirow{2}{*}{100}  & \multirow{2}{*}{40} & \multirow{2}{*}{20}& \multirow{2}{*}{40} & \multirow{2}{*}{100} &\multirow{2}{*}{20} &IT     & 	2979.6    & 1064   	&   454.2  		\\
    &    &    &  &  &    &CPU   &  0.04    &   0.09       &  0.02		\\
\hline
\multirow{2}{*}{40}  & \multirow{2}{*}{100} & \multirow{2}{*}{40}& \multirow{2}{*}{100} & \multirow{2}{*}{40} &\multirow{2}{*}{40} &IT     & 	>   & 49332.7	&   1807.2 		\\
    &    &    &   &  &   &CPU   &   >      &2.15  &   0.13		\\
\hline
\multirow{2}{*}{40}  & \multirow{2}{*}{100} & \multirow{2}{*}{20}& \multirow{2}{*}{100} & \multirow{2}{*}{40} &\multirow{2}{*}{20} &IT     & 	14788    & 3484	 	&   441.1 		\\
    &    &    &   &  &   &CPU   &   0.19      &  0.20      &  0.03		\\
\hline
\multirow{2}{*}{500}  & \multirow{2}{*}{100} & \multirow{2}{*}{100}& \multirow{2}{*}{100} & \multirow{2}{*}{500} &\multirow{2}{*}{100} &IT     & >	 & 	32109  	&  2250.4    \\
    &    &    &   &  &   &CPU   &   >    &  57.61       &  0.33 		\\
\hline
\multirow{2}{*}{500}  & \multirow{2}{*}{100} & \multirow{2}{*}{50}& \multirow{2}{*}{100} & \multirow{2}{*}{500} &\multirow{2}{*}{50} &IT    & 	9193.9 	& 3158.6 &935.3	 \\
    &    &    &   &  &   &CPU   &  6.07 &  5.66    &  0.13 		\\
\hline
\multirow{2}{*}{1000}  & \multirow{2}{*}{200} & \multirow{2}{*}{100}& \multirow{2}{*}{100} & \multirow{2}{*}{1000} &\multirow{2}{*}{50} &IT     & 	15206.4   &  5076.7  	&  1655.5   \\
    &    &    &   &  &   &CPU   &   58.43   &    52.45    &  1.23  	\\
\hline
\multirow{2}{*}{1000}  & \multirow{2}{*}{200} & \multirow{2}{*}{200}& \multirow{2}{*}{100} & \multirow{2}{*}{1000} &\multirow{2}{*}{100} &IT     & 	>   & 39848	  	&  3906.7   \\
    &    &    &   &  &   &CPU   &    >   &   402.15    & 2.51  	\\
\hline
 \end{tabular}
\end{table}
\begin{table}[H]
\caption{IT and CPU of ME-RGRK, ME-MWRK and CME-RK for the consistent matrix equations with Type II.}
\label{tab2}
\centering
\resizebox{\textwidth}{!}{
\begin{tabular}{ c c c c c c c c c c c c c c}
\hline
  $m$  & $p$  &$r_1$ & $\frac{\sigma_{\max}(A)}{\sigma_{\min}(A)}$ & $q$  & $n$ &$r_2$ & $\frac{\sigma_{\max}(B)}{\sigma_{\min}(B)}$  &  & ME-RGRK & ME-MWRK  &  CME-RK \\
\hline
\multirow{2}{*}{100}  & \multirow{2}{*}{40} & \multirow{2}{*}{40} & \multirow{2}{*}{2} & \multirow{2}{*}{40} & \multirow{2}{*}{100} &\multirow{2}{*}{40} & \multirow{2}{*}{2} &IT     & 10865.1	    & 5617	   	&   842.3 		\\
    &    &    &   &  & &&  &CPU   &   0.15      &   0.50        &  0.03		\\
\hline
\multirow{2}{*}{100}  & \multirow{2}{*}{40} & \multirow{2}{*}{20} & \multirow{2}{*}{2} & \multirow{2}{*}{40} & \multirow{2}{*}{100} &\multirow{2}{*}{20} & \multirow{2}{*}{2} &IT     & 	2409    & 	836  	&   422 		\\
    &    &    &   &  & &&  &CPU   &  0.03       &    0.07     &  0.02	 	\\
\hline
\multirow{2}{*}{100}  & \multirow{2}{*}{40} & \multirow{2}{*}{20} & \multirow{2}{*}{5} & \multirow{2}{*}{40} & \multirow{2}{*}{100} &\multirow{2}{*}{20} & \multirow{2}{*}{5} &IT     & 22423.2	    & 6439.8   	&   1145.2		\\
    &    &    &   &  & &&  &CPU   &   0.33     &   0.57      &  0.04		\\
\hline
\multirow{2}{*}{500}  & \multirow{2}{*}{100} & \multirow{2}{*}{100} & \multirow{2}{*}{2} & \multirow{2}{*}{100} & \multirow{2}{*}{500} &\multirow{2}{*}{100} & \multirow{2}{*}{2} &IT     & 40768	    & 20507.2	 	&   1992.3 		\\
    &    &    &   &  & &&  &CPU   &  21.03       &   34.02        &  0.29		\\
\hline
\multirow{2}{*}{500}  & \multirow{2}{*}{100} & \multirow{2}{*}{50} & \multirow{2}{*}{5} & \multirow{2}{*}{500} & \multirow{2}{*}{100} &\multirow{2}{*}{50} & \multirow{2}{*}{5} &IT     & >	    & 	35159  & 2893.6 	\\
    &    &    &   &  & &&  &CPU   &   >      &   59.02       &  0.39		\\
\hline
\multirow{2}{*}{500}  & \multirow{2}{*}{100} & \multirow{2}{*}{50} & \multirow{2}{*}{10} & \multirow{2}{*}{500} & \multirow{2}{*}{100} &\multirow{2}{*}{50} & \multirow{2}{*}{10} &IT     & >	    & 	>  	&  10693.4  	\\
    &   &    &   &  & &&  &CPU   &   >    &     >    &  1.68	\\
\hline
\multirow{2}{*}{1000}  & \multirow{2}{*}{200} & \multirow{2}{*}{100} & \multirow{2}{*}{2} & \multirow{2}{*}{100} & \multirow{2}{*}{1000} &\multirow{2}{*}{50} & \multirow{2}{*}{2} &IT     & 	19679.2    & 6974.9	 	&  1722.8  		\\
    &    &    &   &  & &&  &CPU   &   73.45     &    70.56     &  1.20	\\
\hline
\multirow{2}{*}{1000}  & \multirow{2}{*}{200} & \multirow{2}{*}{100} & \multirow{2}{*}{5} & \multirow{2}{*}{100} & \multirow{2}{*}{1000} &\multirow{2}{*}{50} & \multirow{2}{*}{5} &IT     & 	>  & > 	&  6037.4  		\\
    &    &    &   &  & &&  &CPU   &   >    &    >   & 4.23	\\
\hline
 \end{tabular}}
\end{table}

\begin{figure}\label{fig1}
  \centering
  \subfigure {
    \includegraphics[width=7.5cm]{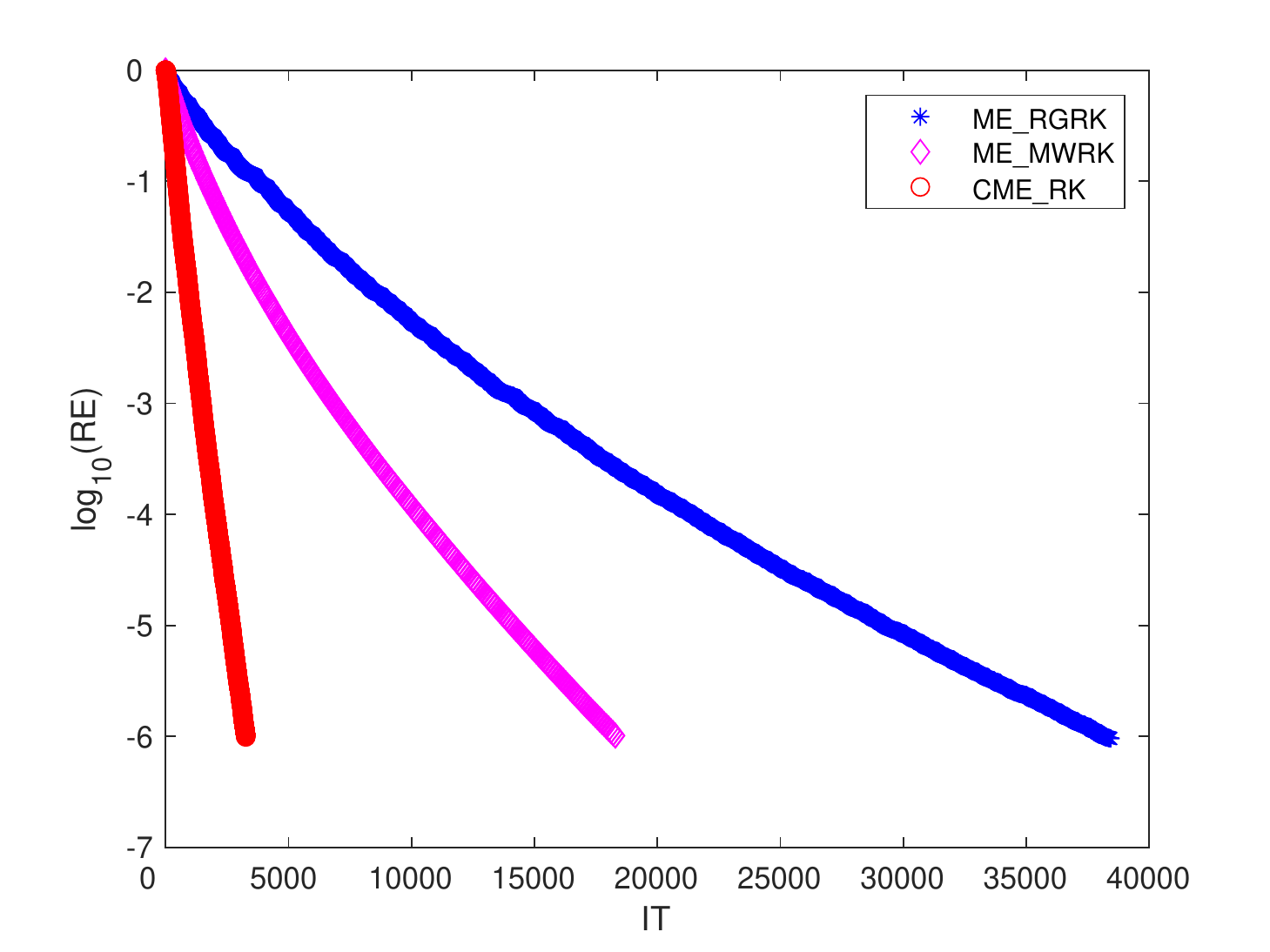}
  }
  \subfigure {
    \includegraphics[width=7.5cm]{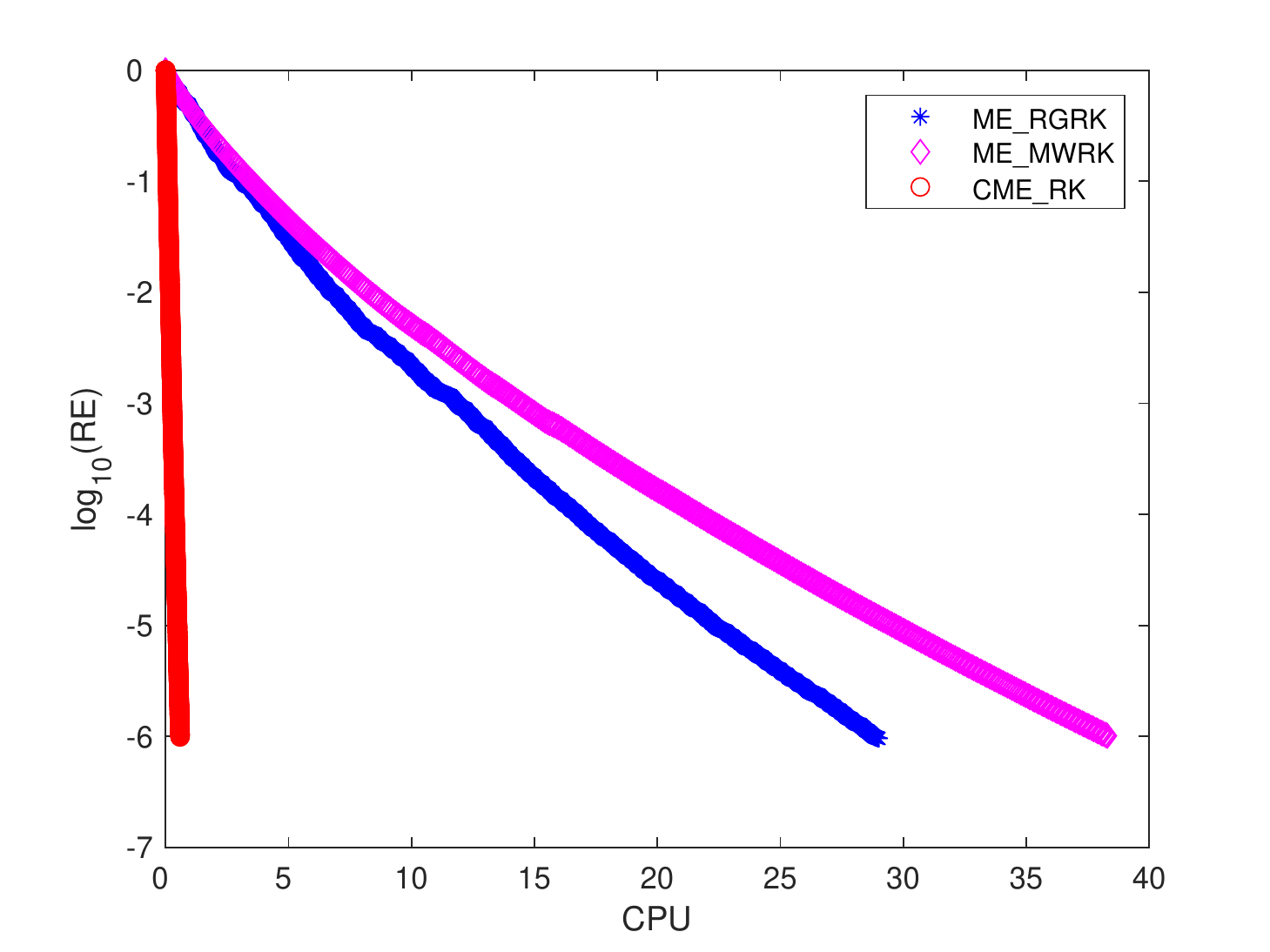}
  }\\
  {\small Type I: $A=randn(500,50), A=[A,A], B=randn(150,600)$}\\
  \subfigure {
    \includegraphics[width=7.5cm]{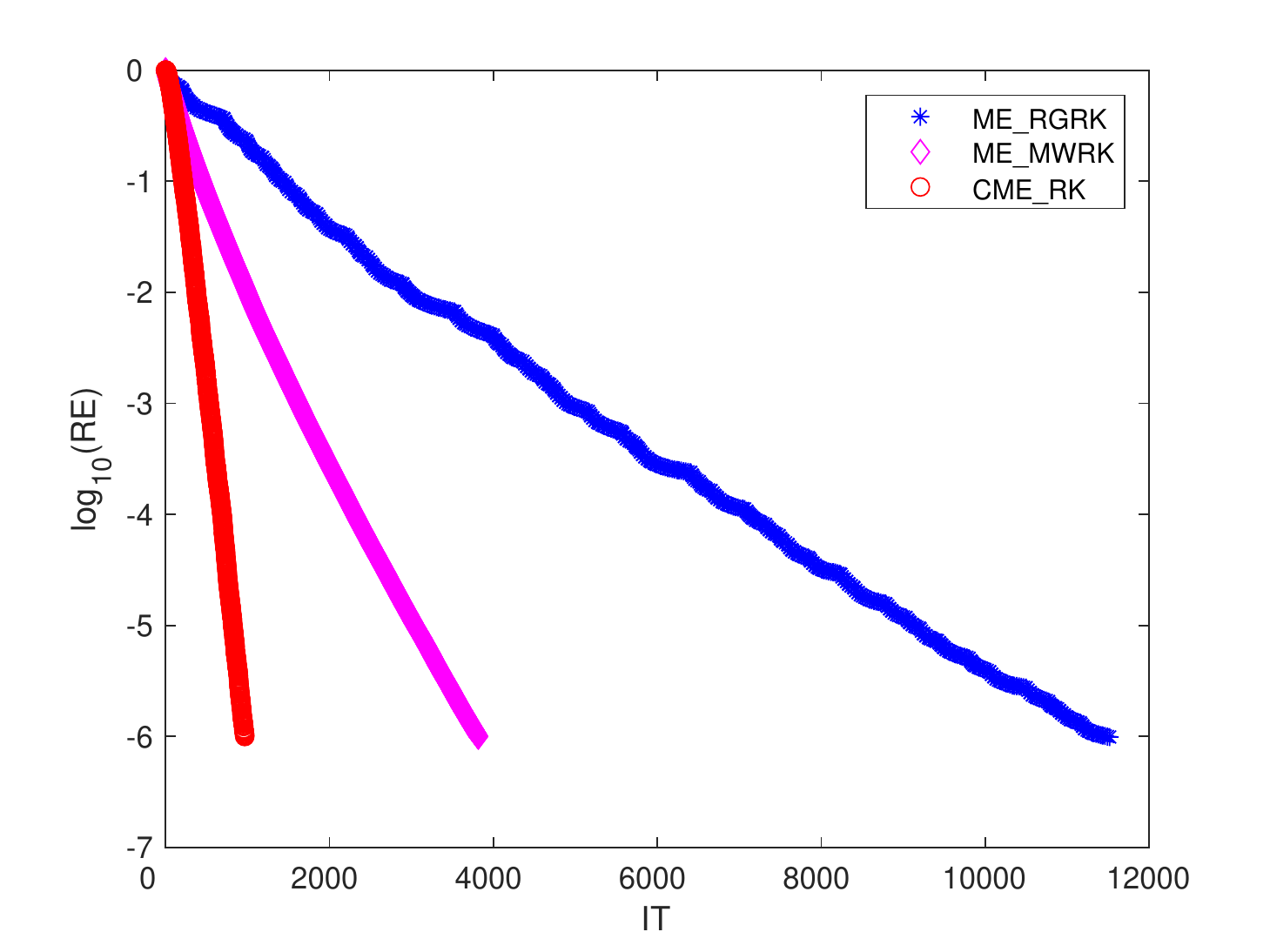}
  }
  \subfigure {
    \includegraphics[width=7.5cm]{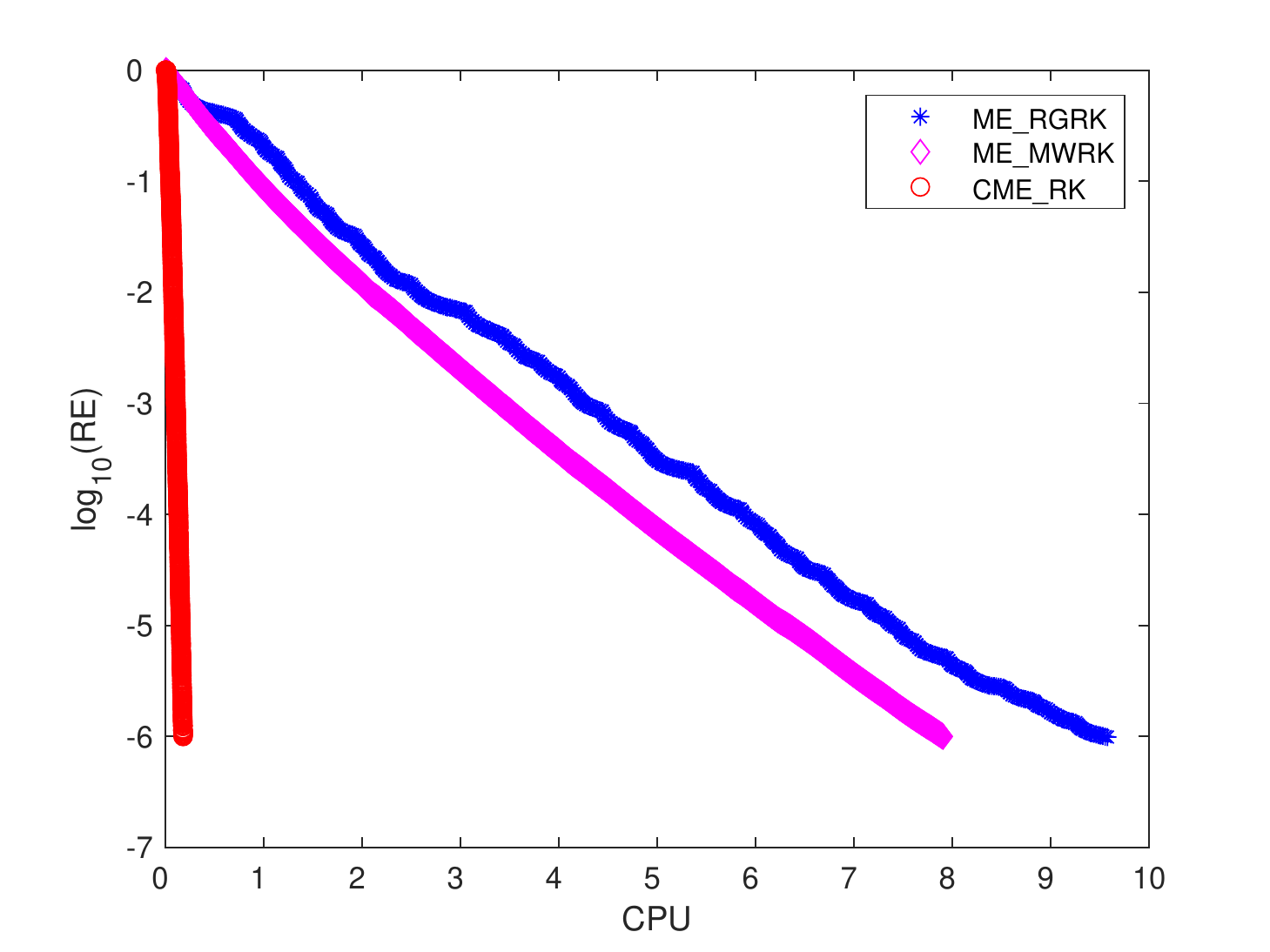}
  }\\
  {\small Type II: $m=500, p=100, r_1=100, \frac{\sigma_{\max}(A)}{\sigma_{\min}(A)}=2, q=150, n=600, r_2=150, \frac{\sigma_{\max}(B)}{\sigma_{\min}(B)}=2$ }\\
  \caption{\emph{\small   IT (left) and CPU (right) of different methods for consistent matrix equations with Type I(top) and Type II (bottom). }}\label{fig1}
\end{figure}

\end{example}

\begin{example}\label{EX5.2} The  ME-RGRK, ME-MWRK  and CME-RK  methods, real-world sparse data. \upshape

For the sparse matrices from \cite{DH11}, we list the numbers of iteration steps and the
computing times for ME-RGRK, ME-MWRK  and  CME-RK  methods in Table \ref{tab3}.  We observe the CME-RK method can  successfully compute an approximate solution of the consistent matrix equation for various $A$ and $B$. For the fist three cases in Table \ref{tab3}, the ME-RGRK, ME-MWRK,  CME-RK methods all converge to the solution, but the CME-RK method is significantly better than the ME-RGRK and ME-MWRK  methods, both in terms of iteration steps and running time. For the last three cases, the ME-RGRK, ME-MWRK methods fail to converge the solution because iteration steps exceed 50000.

\begin{table}[H]
\caption{IT and CPU of ME-RGRK, ME-MWRK,  and CME-RK  for the consistent matrix equations with sparse matrices from \cite{DH11}.}
\label{tab3}
\centering
\begin{tabular}{ c c c c c c}
\hline
  $A$  & $B$   &  & ME-RGRK & ME-MWRK   &  CME-RK   \\
\hline
\multirow{2}{*}{ash219}  & \multirow{2}{*}{divorce$^T$} &IT     & 49871.1	    & 15423.5	   	&   3522.4  	\\
    &     &CPU   &    0.78     &    1.26     &  0.17	 	\\
\hline
\multirow{2}{*}{divorce}  & \multirow{2}{*}{ash219$^T$}  &IT     & 	43927.8    & 14164.4	  	&  3521.3  	 	\\
    &    &CPU   &   1.15      &   1.35          &  0.17 	\\
\hline
\multirow{2}{*}{divorce}  & \multirow{2}{*}{ash219}  &IT     & 	40198.7    & 17251.4	    	&  3238.9 	 	\\
    &    &CPU   &   0.63      &   0.80      &  0.14 	\\
\hline
\multirow{2}{*}{ash958}  & \multirow{2}{*}{ash219$^T$} &IT     & 	>   & 	>   	&  6706.2  	 	\\
    &    &CPU   &  >      &      >       &  1.23	 \\
\hline
\multirow{2}{*}{ash219}  & \multirow{2}{*}{ash958$^T$} &IT     & 	>    & 	>  	&  5762.6	 	\\
    &    &CPU   &   >     &      >        &  1.18	 	\\
\hline
\multirow{2}{*}{ash958}  & \multirow{2}{*}{Worldcities$^T$} &IT     & 	-   & 	> 	&  38088.5  	 	\\
    &    &CPU   &  -      &      >      &  7.94	 \\
\hline
 \end{tabular}
\end{table}
\end{example}
\subsection{Inconsistent Matrix Equation}
Next, we compare the performance of the  RBCD, IME-RGS and IME-REKRGS methods for the inconsistent matrix equation $AXB=C$ where $B$ is full row rank. To construct an inconsistent matrix equation, we set $C=AX^*B+R$, where $X^*$ and $R$ are random matrices which are generated by $X^*=randn(p,q)$ and $R=\delta*randn(p,q), \delta \in (0,1)$. In addition, we also show the experiment results of the REKRK, DREK and DREGS methods, which do not require full row rank of B.

\begin{example}\label{EX5.3} The  RBCD, IME-RGS and IME-REKRGS methods, synthetic dense  data.\upshape

In Table \ref{tab4} and \ref{tab5}, we report the average IT and CPU of the RBCD, IME-RGS and IME-REKRGS methods for solving inconsistent matrix with Type I and Type II matrices. Figure \ref{fig2} shows the plots of relative error (RE) in base-10 logarithm  versus IT and CPU of different methods  with Type I ($A=randn(500,100), B=randn(150,600)$ ) and Type II ($m=500, p=100, r_1=100, \frac{\sigma_{\max}(A)}{\sigma_{\min}(A)}=2, q=150, n=600, r_2=150, \frac{\sigma_{\max}(B)}{\sigma_{\min}(B)}=2$). From these tables, we can see that the IME-RGS and IME-REKRGS methods are better than the RBCD method in terms of IT and CPU time, especially when the matrix dimension is large (see the last two cases in Table \ref{tab4}) or the $\frac{\sigma_{\max}}{\sigma_{\min}}$ is large (see the last three cases in Table \ref{tab5}). From Figure \ref{fig2}, we can find the IME-RGS and IME-REKRGS methods converge faster than the RBCD method, although the relative error of RBCD decreases faster in the initial iteration.
\begin{table}[H]
\caption{IT and CPU of RBCD, IME-RGS and IME-REKRGS for the inconsistent matrix equations with Type I.}
\label{tab4}
\centering
\begin{tabular}{ c c c c  c c c c c c c c}
\hline
  $m$  & $p$  &$r_1$  & $q$  & $n$ &$r_2$ & & RBCD & IME-RGS   & IME-REKRGS  \\
\hline
\multirow{2}{*}{100}  & \multirow{2}{*}{40} & \multirow{2}{*}{40}& \multirow{2}{*}{40} & \multirow{2}{*}{100} &\multirow{2}{*}{40} &IT     & 11116	  & 1883.7	 &    2449.2	  	\\
    &    &    &   &  &   &CPU   &  0.43    &  0.09      &  0.18	 	\\
\hline
\multirow{2}{*}{100}  & \multirow{2}{*}{40} & \multirow{2}{*}{20}& \multirow{2}{*}{40} & \multirow{2}{*}{100} &\multirow{2}{*}{40} &IT     & 	12416    & -	 	&   1725.8 	   \\
    &    &    &   &  &   &CPU   &    0.49     &    -        &  	0.13  	\\
\hline
\multirow{2}{*}{500}  & \multirow{2}{*}{100} & \multirow{2}{*}{100}& \multirow{2}{*}{50} & \multirow{2}{*}{200} &\multirow{2}{*}{50} &IT     & 	2820.1    & 2011.5	  	&   2603.7    \\
    &    &    &   &  &   &CPU   &   0.56      &   0.31        &  0.62 	\\
\hline
\multirow{2}{*}{500}  & \multirow{2}{*}{100} & \multirow{2}{*}{100}& \multirow{2}{*}{100} & \multirow{2}{*}{500} &\multirow{2}{*}{100} &IT     & 	5067    & 2314.7	 	&  2782.2  \\
    &    &    &   &  &   &CPU   &  4.06    &     1.76       &  2.55 	\\
\hline
\multirow{2}{*}{1000}  & \multirow{2}{*}{100} & \multirow{2}{*}{100}& \multirow{2}{*}{200} & \multirow{2}{*}{1000} &\multirow{2}{*}{200} &IT     & 5969.7	    & 3833.4		&   3867.9  \\
    &    &    &   &  &   &CPU   &  27.02    &  18.18   &  19.47	\\
\hline
\multirow{2}{*}{1000}  & \multirow{2}{*}{200} & \multirow{2}{*}{200}& \multirow{2}{*}{200} & \multirow{2}{*}{1000} &\multirow{2}{*}{200} &IT     & 9738.4	    & 4485.2		&   5442   \\
    &    &    &   &  &   &CPU   &   50.43    &  24.27     &  39.64 	\\
\hline
 \end{tabular}
\end{table}
\begin{table}[H]
\caption{IT and CPU of RBCD, IME-RGS and IME-REKRGS  for the inconsistent matrix equations with Type II.}
\label{tab5}
\centering
\resizebox{\textwidth}{!}{
\begin{tabular}{ c c c c c c c c c c c c c}
\hline
  $m$  & $p$  &$r_1$ & $\frac{\sigma_{\max}(A)}{\sigma_{\min}(A)}$ & $q$  & $n$ &$r_2$ & $\frac{\sigma_{\max}(B)}{\sigma_{\min}(B)}$  &  & RBCD & IME-RGS    &   IME-REKRGS \\
\hline
\multirow{2}{*}{100}  & \multirow{2}{*}{40} & \multirow{2}{*}{40} & \multirow{2}{*}{2} & \multirow{2}{*}{40} & \multirow{2}{*}{100} &\multirow{2}{*}{40} & \multirow{2}{*}{2} &IT     & 	1176.5    & 716.2	   	&   949.6	\\
    &    &    &   &  & &&  &CPU   & 0.04	    & 0.03	  	&   0.09		\\
\hline
\multirow{2}{*}{100}  & \multirow{2}{*}{40} & \multirow{2}{*}{40} & \multirow{2}{*}{5} & \multirow{2}{*}{40} & \multirow{2}{*}{100} &\multirow{2}{*}{40} & \multirow{2}{*}{5} &IT      & 27631	    & 	2974.3   	&  3773.5 	\\
    &    &    &   &  & &&  &CPU   & 	1.10    & 0.16	  	&   0.27		\\
\hline
\multirow{2}{*}{500}  & \multirow{2}{*}{100} & \multirow{2}{*}{100} & \multirow{2}{*}{2} & \multirow{2}{*}{100} & \multirow{2}{*}{500} &\multirow{2}{*}{100} & \multirow{2}{*}{2} &IT     & 2953.4	    & 2101.2	  &  2307.8 		\\
    &    &    &   &  & &&  &CPU    & 	2.52    & 1.78	   	&   	2.25	\\
\hline
\multirow{2}{*}{500}  & \multirow{2}{*}{100} & \multirow{2}{*}{100} & \multirow{2}{*}{5} & \multirow{2}{*}{100} & \multirow{2}{*}{500} &\multirow{2}{*}{100} & \multirow{2}{*}{5} &IT      & >	    & 	6432.5  	&  8101.3 	\\
    &    &    &   &  & &&  &CPU    & 	>    & 5.29	  	&   7.96		\\
\hline
\multirow{2}{*}{1000}  & \multirow{2}{*}{100} & \multirow{2}{*}{100} & \multirow{2}{*}{2} & \multirow{2}{*}{200} & \multirow{2}{*}{1000} &\multirow{2}{*}{200} & \multirow{2}{*}{2} &IT     & 5577.3	    & 3242.6	   &   3380.7	\\
    &   &    &   &  & &&  &CPU    & 	25.68    & 15.33	  &   17.13	\\
\hline
 \multirow{2}{*}{1000}  & \multirow{2}{*}{100} & \multirow{2}{*}{100} & \multirow{2}{*}{5} & \multirow{2}{*}{200} & \multirow{2}{*}{1000} &\multirow{2}{*}{200} & \multirow{2}{*}{5} &IT    & >	    & 10672.5	  	&   11006.4		\\
    &    &    &   &  & &&  &CPU    & >	    & 	49.49  	&  56.54	\\
 \hline
 \end{tabular}}
\end{table}

\begin{figure}[H]\label{fig2}
  \centering
  \subfigure {
    \includegraphics[width=7.5cm]{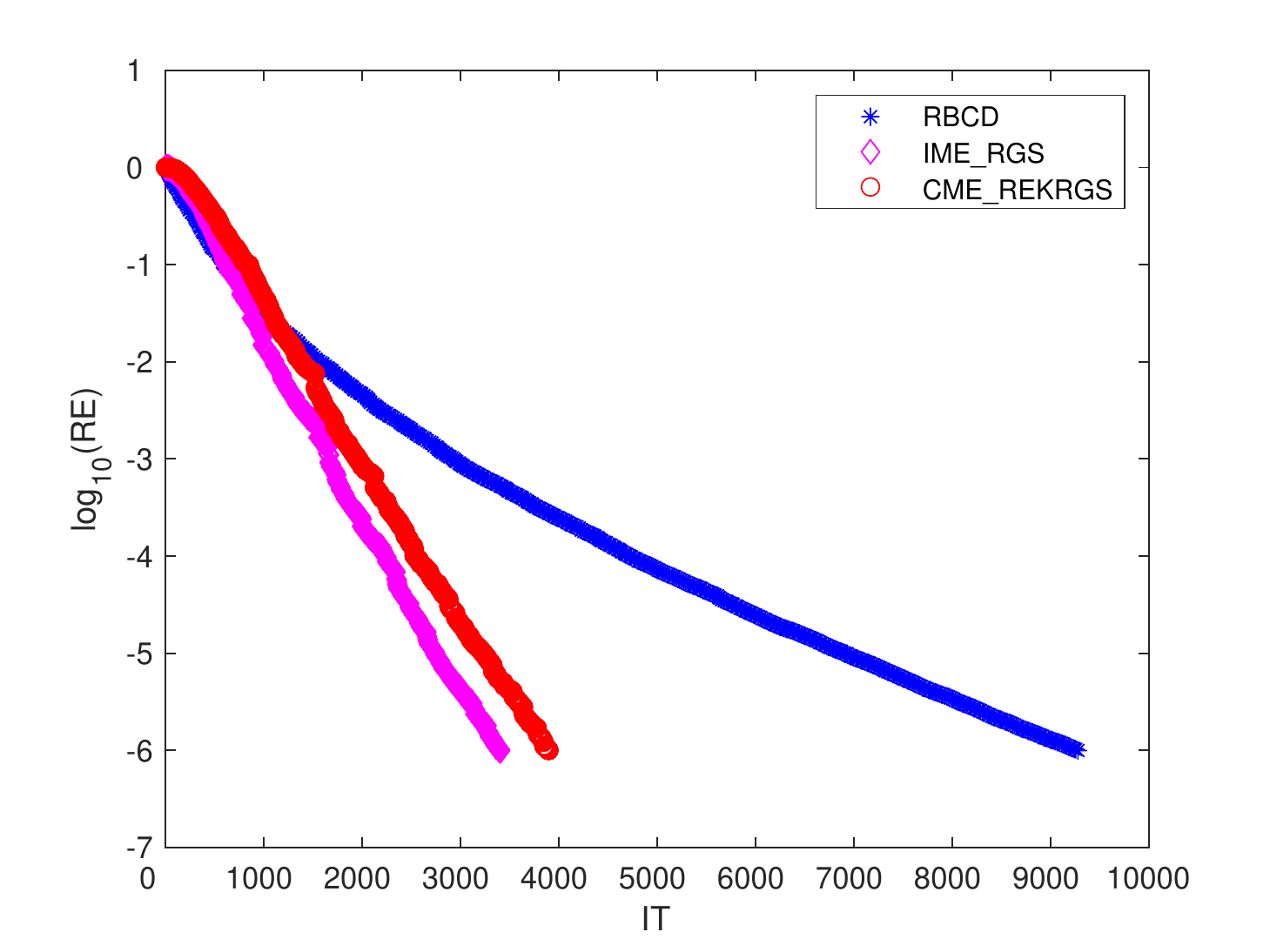}
  }
  \subfigure {
    \includegraphics[width=7.5cm]{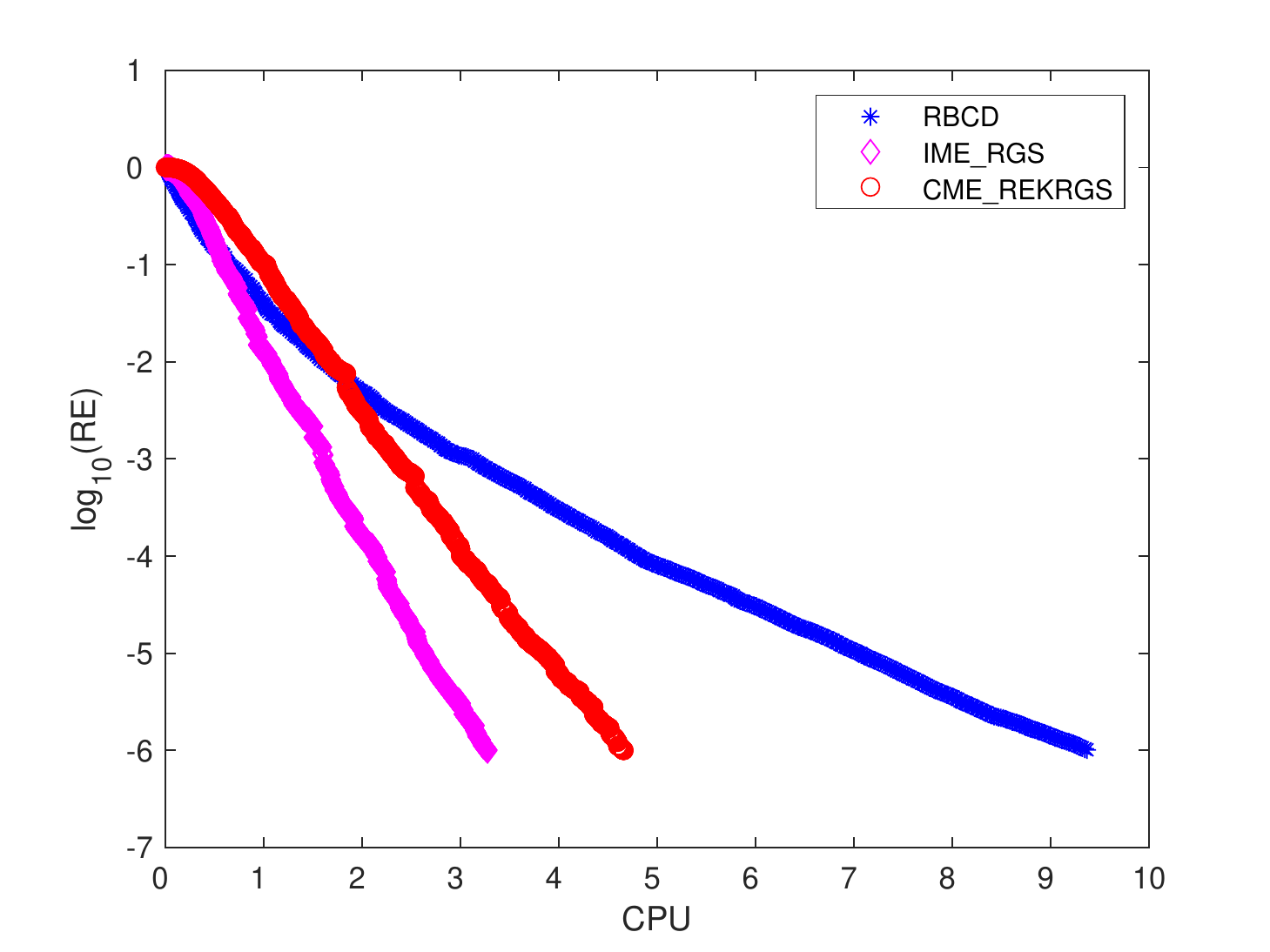}
  }\\
  {\small Type I: $A=randn(500,100), B=randn(150,600)$} \\
  \subfigure {
    \includegraphics[width=7.5cm]{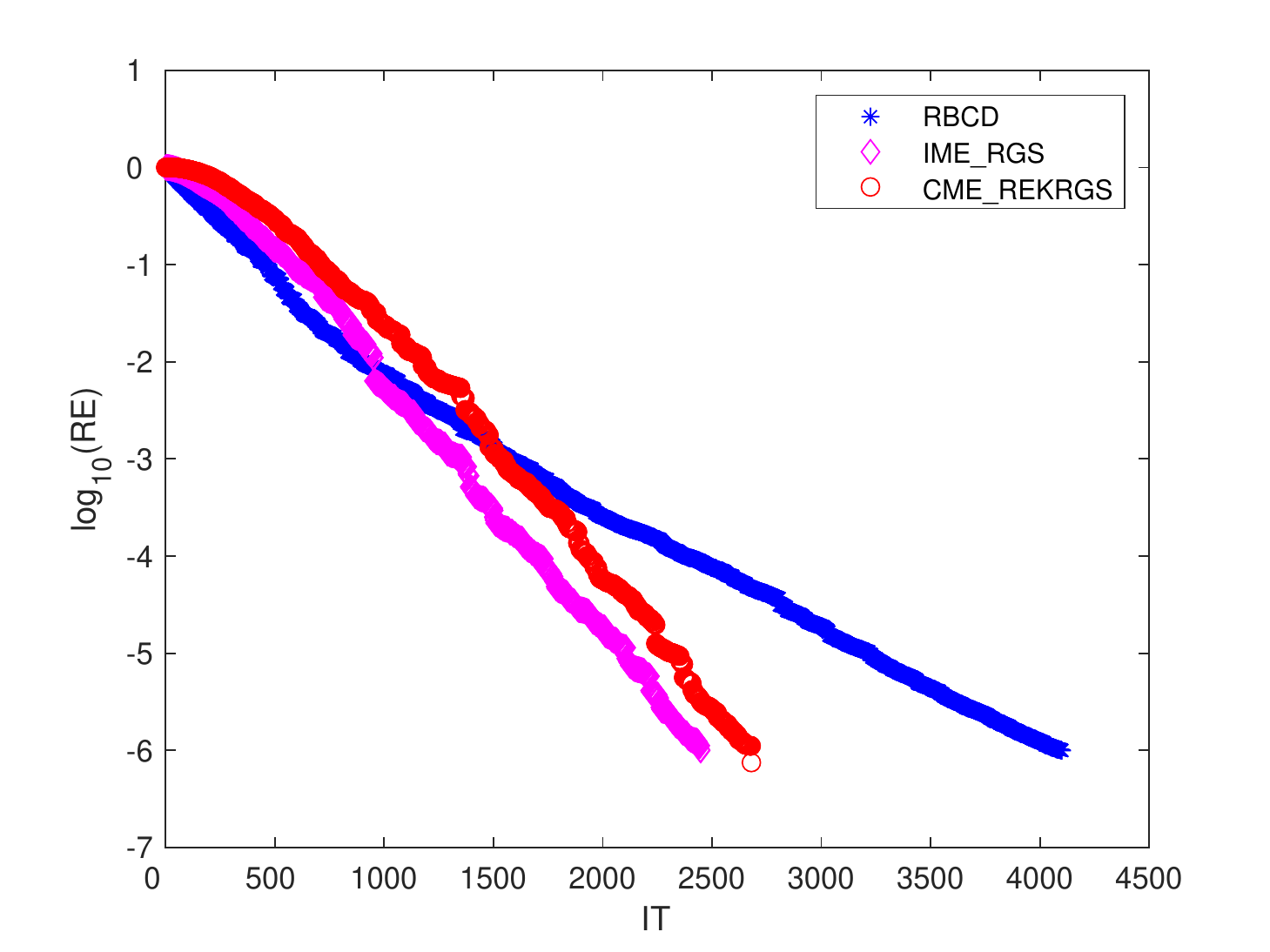}
  }
  \subfigure {
    \includegraphics[width=7.5cm]{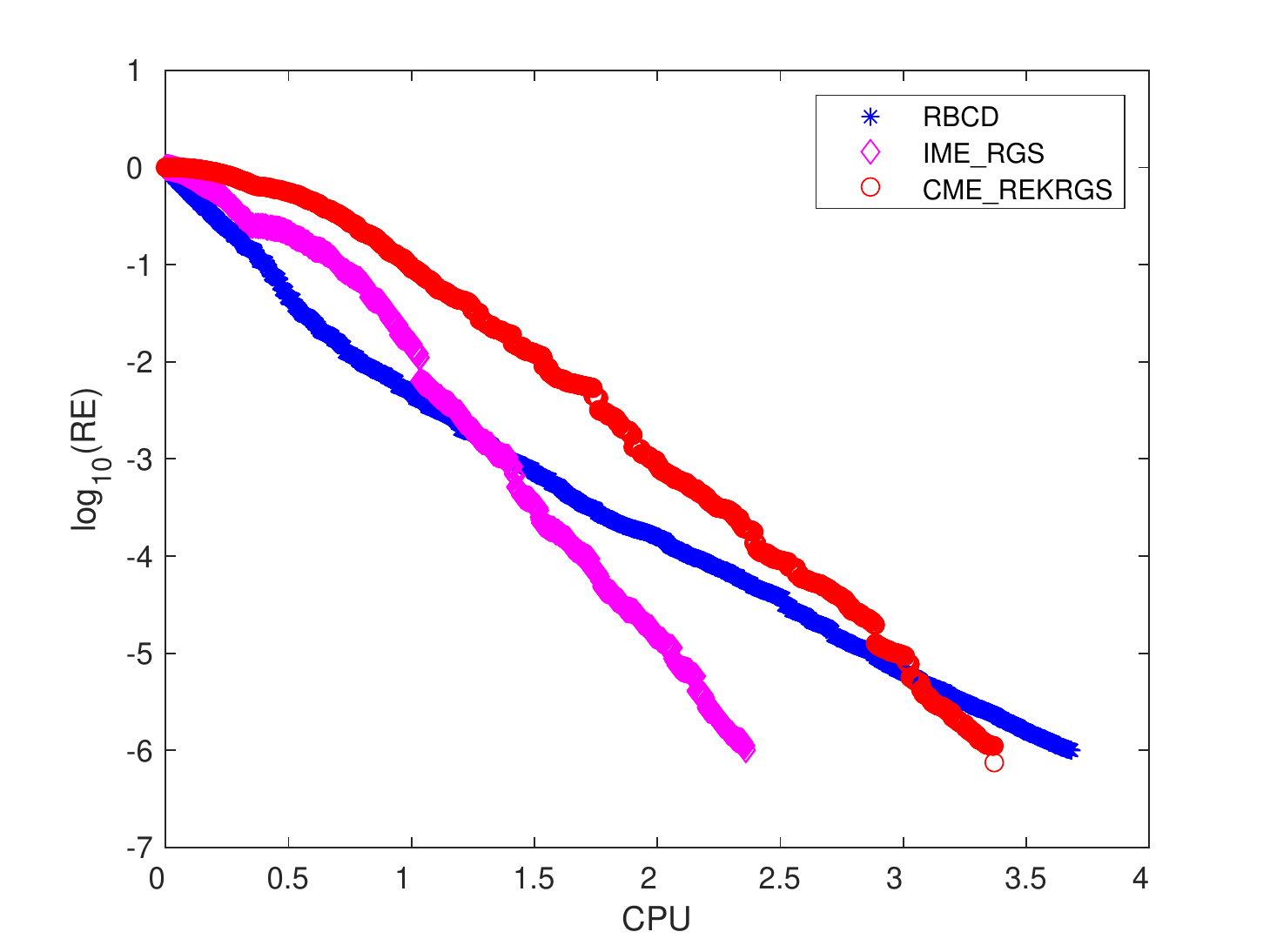}
  }\\
  {\small Type II: $m=500, p=100, r_1=100, \frac{\sigma_{\max}(A)}{\sigma_{\min}(A)}=2, q=150, n=600, r_2=150, \frac{\sigma_{\max}(B)}{\sigma_{\min}(B)}=2$ }\\
  \caption{\emph{\small   IT (left) and CPU (right) of different methods for inconsistent matrix equations with Type I(top) and Type II (bottom). }}\label{fig2}
\end{figure}

\end{example}

\begin{example}\label{EX5.4} The  RBCD, IME-RGS and IME-REKRGS methods, real-world sparse data. \upshape

In table \ref{tab6}, we list the average IT and CPU  of the RBCD, IME-RGS  and IME-REKRGS methods for solving inconsistent matrix with sparse matrices. We can observe that the IME-RGS  and IME-REKRGS methods require less CPU time than the RBCD method in all case and less IT in all case except for $A=ash219, B=ash958^T$.
\begin{table}[H]
\caption{IT and CPU of RBCD, IME-RGS  and IME-REKRGS for the inconsistent matrix equations with sparse matrices from \cite{DH11}.}
\label{tab6}
\centering
\begin{tabular}{ c c c c c c c }
\hline
  $A$  & $B$   &  & RBCD & IME-RGS    &  IME-REKRGS \\
\hline
\multirow{2}{*}{ash219}  & \multirow{2}{*}{divorce$^T$} &IT     & 13115.2	    & 3543.7	 	&   3653.6 	\\
    &     &CPU   &    1.15   &   0.20  &  0.28		\\
\hline
\multirow{2}{*}{divorce}  & \multirow{2}{*}{ash219$^T$}  &IT     & 	 >   & 	3371.8  	&  4150.9  	  	\\
    &    &CPU   &   >      &    0.26       &  	0.48	\\
\hline
\multirow{2}{*}{ash958}  & \multirow{2}{*}{ash219$^T$}  &IT     & 	7200   & 	7536.4  	&  7808.6  	  	\\
    &    &CPU   &   10.44     &  5.06      &  	6.04	\\
\hline

\multirow{2}{*}{ash219}  & \multirow{2}{*}{ash958$^T$}  &IT     & 21118.5	  & 6749.2	   &  5675.7  	  	\\
    &    &CPU   &  14.93      &  5.37     &  6.36	\\
\hline
\multirow{2}{*}{ash958}  & \multirow{2}{*}{Worldcities$^T$}  &IT     & 	>   & 	39183.6  	&  38538.1	  	\\
    &    &CPU   &   >     &  42.34    &  	53.65	\\
\hline
 \end{tabular}
\end{table}
\end{example}

\begin{example}\label{EX5.5} The IME-REKRK, DREK and DREGS methods. \upshape

Finally, we test the effectiveness of IME-REKRK, DREK and DREGS methods for inconsistent matrix equations,  including synthetic dense data and real-world sparse data. The features of $A$ and $B$ are given in Table \ref{tab7}. The experiment results are listed in Table \ref{tab8}. For the DREK and GREGS methods, the iteration steps and running time for calculating $Y^{(k)}$ and $X^{(k)}$ are represented by ``+''. From Table \ref{tab8}, we can observe that the IME-REKRK method can compute an approximate solution to the linear least-squares problems when B is full column rank. The DREK and DREGS methods can successfully solve the linear least-squares solution for all cases.
\begin{table}[H]
\caption{The details feature of $A$ and $B$ of Example \ref{EX5.5}.}
\label{tab7}
\centering
\resizebox{\textwidth}{!}{
\begin{tabular}{ c c c c c }
\hline
 Type  & $A$  & $r(A)$  & $B$ & $r(B)$   \\
\hline
  Type a & $A=randn(500,100), A=[A,A;A,A]$ & 100  & $B=randn(1000,100)$    &  100 		\\
\hline
 Type b & $A=randn(500,100), A=[A,A;A,A]$ & 100  & $B=randn(50,500),B=[B,B;B,B]$    &  50 		\\
\hline
 Type c &  \makecell[c]{$A=U_1D_1V_1^T,$ \\ $m=1000,p=100,\frac{\sigma_{\max}(A)}{\sigma_{\min}(A)}=2$}  & 50  & \makecell[c]{$B=U_2D_2V_2^T,$ \\ $q=200,n=1000,\frac{\sigma_{\max}(B)}{\sigma_{\min}(B)}=2$}   &  40 	\\
\hline
Type d &  \makecell[c]{$A=U_1D_1V_1^T,$ \\ $m=1000,p=100,\frac{\sigma_{\max}(A)}{\sigma_{\min}(A)}=5$}  & 100  & \makecell[c]{$B=U_2D_2V_2^T,$ \\ $q=1000,n=100,\frac{\sigma_{\max}(B)}{\sigma_{\min}(B)}=5$}   &  100 	\\
\hline
Type e &   A=ash219 &  85  & B=Worldcities  &  100 	\\
\hline
Type f & A=ash958 &  292  & B=ash219 &  85  	\\
\hline
 \end{tabular}}
\end{table}
\begin{table}[H]
\caption{IT and CPU of the IME-REKRK, DREK and DREGS methods for the inconsistent matrix equations.}
\label{tab8}
\centering
\resizebox{\textwidth}{!}{
\begin{tabular}{ c c c c c c c c c }
\hline
 Method & & Type a &  Type b  &  Type c &  Type d &  Type e  &  Type f    \\
\hline
\multirow{2}{*}{IME-REKRK} &IT  &  2692.3 & 2569.4   &  -    & -  & 41373.1   &  8733.5    \\
  & CPU   & 3.81  & 14.52   &  -  & - & 8.36   &  2.60   \\
\hline
\multirow{2}{*}{DREK}&IT  & 1785.2 + 1981.7   &  2110.5 + 1067.2  & 1171.6 + 942.1    & 3188.2 + 2962.7   & 1831.3 + 596.6      &  7780.7 + 3236.6 	\\
  & CPU   & 0.60 + 1.98  &  10.87 + 1.51  &   5.88 + 0.97 & 13.66 + 1.21 &  0.33 + 0.16    &  1.56 + 0.62   \\
 \hline
\multirow{2}{*}{DREGS}&IT & 2087.3 + 1792.6  &   2358.2 + 1199.5  & 1327.6 + 1194.8 & 3205.6 + 3528.1 &  2498.2 + 703.9 &  7998.1 + 3568.5   \\
 & CPU   &  0.74 + 3.26 & 12.36 + 1.54   & 6.97 + 1.22 & 15.36 + 1.47 &  0.30 + 0.20  &  1.69 + 0.71    \\
\hline
 \end{tabular}}
\end{table}
\end{example}
\section{Conclusion}
For consistent matrix equations $AXB=C$, we have proposed a Kaczmarz-type algorithm; for inconsistent case, we have suggested a Gauss-Seidel algorithm if $A$ is full column rank and $B$ is full row rank; and for the matrix is inconsistent and $A$ or $B$ is not full rank, we have given out some extended Kaczmarz and extended Gauss-Seidel algorithms. Theoretically, we have proved the proposed algorithms converge linearly to the unique minimal $F$-norm solution or least-squares solution (i.e., $A^+CB^+$) and numerical results show the effectiveness of all algorithms.

\end{document}